\theoremstyle{plain}
\newtheorem{theo}{Theorem}[section]
\newtheorem{lemma}[theo]{Lemma}
\newtheorem{prop}[theo]{Proposition}
\newtheorem{cor}[theo]{Corollary}
\newtheorem*{clm}{Claim}
\theoremstyle{remark}
\newtheorem{rmrk}[theo]{Remark}
\theoremstyle{definition}
\newtheorem{dfn}[theo]{Definition}
\numberwithin{equation}{section}
\numberwithin{figure}{section}
\newcommand{\D}{\partial}
\newcommand{\R}{\mathbb{R}}
\newcommand{\la}{\lambda}
\newcommand{\vf}{\varphi}
\newcommand{\ve}{\varepsilon}
\newcommand{\Om}{\Omega}
\newcommand{\Rn}{\R^n}
\renewcommand{\div}{\operatorname{div}}
\newcommand{\loc}{\mathrm{loc}}
\newcommand{\defeq}{\stackrel{\text{def}}{=}}
\title[An epiperimetric inequality approach, etc.]
{An epiperimetric inequality approach to the regularity of the free boundary in the Signorini problem with variable coefficients}
\author{Nicola Garofalo}
\address{Dipartimento d'Ingegneria Civile e Ambientale (DICEA)\\ Universit\`a di Padova\\ via Trieste 63, 35131 Padova, Italy}
\email[Nicola Garofalo]{nicola.garofalo@unipd.it}
\thanks{First author supported in part by a grant ``Progetti d'Ateneo, 2013,'' University of Padova}
\author{Arshak Petrosyan}
\address{Department of Mathematics, Purdue University} \email[Arshak Petrosyan]{arshak@math.purdue.edu}
\thanks{Second author supported in part by the  NSF Grant DMS-1101139}
\author{Mariana Smit Vega Garcia}
\address{Universit\"at Duisburg-Essen, 
Fakult\"at f\"ur Mathematik,
45117 Essen} \email[Mariana Smit Vega Garcia]{mariana.vega-smit@uni-due.de}
\thanks{This project was completed while the third author was visiting
  the Institute Mittag-Leffler for the program ``Homogenization and
  Random Phenomenon.'' She thanks this institution for the gracious hospitality and the excellent work environment.} 
\keywords{Thin obstacle problem, Signorini problem, Lipschitz
  coefficients, regularity of free boundary, Weiss-type monotonicity
  formula, Almgren's frequency formula, 
  epiperimetric inequality}
\subjclass[2010]{35R35}
\begin{document}
\begin{abstract}
In this paper we establish the $C^{1,\beta}$ regularity of the regular
part of the free boundary in the Signorini problem for elliptic
operators with variable Lipschitz coefficients. This work is a
continuation of the recent paper \cite{GS}, where two of us
established the interior optimal regularity of the solution. Two of
the central results of the present work are a new monotonicity formula
and a new epiperimetric inequality. 
\end{abstract}
\maketitle

\tableofcontents

\section{Introduction}\label{S:intro}

\subsection{Statement of the problem and main assumptions}
The purpose of the present paper is to establish the $C^{1,\beta}$
regularity of the free boundary near so-called regular points in the
Signorini problem for elliptic operators with variable Lipschitz
coefficients. Although this work represents a continuation of the
recent paper \cite{GS}, where two of us established the interior
optimal regularity of the solution, proving the regularity of the free
boundary has posed some major new challenges. Two of the central
results of the present work are a new monotonicity formula (Theorem
\ref{T:weiss}) and a new epiperimetric inequality (Theorem
\ref{T:epi}). Both of these results have been inspired by those
originally obtained by Weiss in \cite{W} for the classical obstacle
problem, but the adaptation to the Signorini problem has required a
substantial amount of new ideas.

\emph{The lower-dimensional} (or \emph{thin}) \emph{obstacle problem}
consists of minimizing the (generalized) Dirichlet energy 
\begin{equation}\label{mp0}
\underset{u\in \mathcal K}{\min} \int_{\Omega}\langle A(x)\nabla u,\nabla u\rangle dx,
\end{equation}
where $u$ ranges in the closed convex set 
\[
\mathcal K=\mathcal K_{g,\varphi}=\{u\in W^{1,2}(\Omega)\mid u= g
\text{ on } \D\Omega,\ u\geq \varphi \text{ on }
\mathcal{M}\cap\Omega\}.
\]
Here, $\Om\subset \Rn$ is a given bounded open set, $\mathcal M$ is a
codimension one manifold which separates $\Om$ into two parts, $g$ is
a boundary datum and the function $\varphi:\mathcal M \to \R$
represents the lower-dimensional, or thin, obstacle. The functions $g$
and $\varphi$ are required to satisfy the standard compatibility
condition $g\ge \varphi$ on $\D \Om \cap \mathcal M$. This problem is
known also as (scalar) \emph{Signorini problem}, as the minimizers
satisfy Signorini conditions on $\mathcal{M}$ (see
\eqref{bb2i}--\eqref{bb4i} below in the case of flat $\mathcal{M}$).

Our assumptions on the matrix-valued function $x\mapsto A(x) = [a_{ij}(x)]$ in
\eqref{mp0} are that $A(x)$ is symmetric, uniformly elliptic, and
Lipschitz continuous (in short $A\in C^{0,1}$). Namely:
\begin{equation}
  \label{Asym}
  a_{ij}(x) = a_{ji}(x)\quad \text{for $i, j = 1,\ldots,n$, and every
    $x\in \Om$};
\end{equation}
 there exists $\la>0$ such that for every $x\in \Om$ and $\xi\in \Rn$, one has
\begin{equation}\label{A0}
\lambda|\xi|^2\leq \langle A(x)\xi,\xi\rangle \leq\lambda^{-1}|\xi|^2;
\end{equation}
there exists $Q\ge 0$ such that
\begin{equation}\label{A}
|a_{ij}(x) - a_{ij}(y)| \le Q |x-y|,\quad x, y\in \Om.
\end{equation}
By standard methods in the calculus of variations it is known that,
under appropriate assumptions on the data, the minimization problem
\eqref{mp0} admits a unique solution $u\in \mathcal K$, see e.g.\
\cite{F2}, or also \cite{T}. The set
\[
\Lambda^\vf(u)=\{x\in\mathcal{M}\cap \Om \mid u(x)=\varphi(x)\}
\]
is known as the \emph{coincidence set}, and its boundary (in the relative topology of $\mathcal M$)
\[
\Gamma^\vf(u)=\D_{\mathcal{M}}\Lambda^\vf(u)
\]
is known as the \emph{free boundary}. In this paper we are interested
in the local regularity properties of $\Gamma^\varphi(u)$. When
$\vf=0$, we will write $\Lambda(u)$ and $\Gamma(u)$, instead of
$\Lambda^0(u)$ and $\Gamma^0(u)$. 

We also note that since we work with Lipschitz coefficients, it is not
restrictive to consider the situation in which the thin manifold is
flat, which we take to be $\mathcal M = \{x_n = 0\}$. We thus consider
the Signorini problem \eqref{mp0} when the thin obstacle $\vf$ is
defined on   
 \[
 B_1' = \mathcal M \cap B_1= \{(x',0)\in B_1\mid |x'|<1\},
 \]
which we call the thin ball in $B_1$. In this case we will also impose
the following conditions on the coefficients
\begin{equation}\label{mjai}
a_{in}(x',0)=0\quad\text{in}\ B_1',\ \text{for}\ i<n,
\end{equation}
which essentially means that the conormal directions $A(x',0)\nu_\pm$
are the same as normal directions $\nu_\pm=\mp e_n$. We stress here
that the condition \eqref{mjai} is not restrictive as it can be
satisfied by means of a $C^{1,1}$ transformation of variables, as proved in Appendix B of \cite{GS}. 

We assume that $\varphi\in C^{1,1}(B_1')$ and denote by $u$ the unique
solution to the minimization problem \eqref{mp0}. (Notice that, by
letting $\tilde \vf(x',x_n) = \vf(x')$, we can think of $\vf\in
C^{1,1}(B_1)$, although we will not make such distinction explicitly.)
Such $u$ satisfies
\begin{align}\label{bb1i}
Lu = \div(A\nabla u)=0&\quad\text{in}\  B_1^+\cup B_1^-,\\ 
\label{bb2i}
u-\vf \geq 0&\quad\text{in}\  B_1',\\
\label{bb3i}
\langle A\nabla u,\nu_+\rangle +\langle A\nabla u,\nu_-\rangle \geq
0&\quad \text{in}\  B_1',\\
\label{bb4i}
(u-\vf) (\langle A\nabla u,\nu_+\rangle +\langle A\nabla
u,\nu_-\rangle )=0&\quad\text{in}\  B_1'.
\end{align}
The conditions \eqref{bb2i}--\eqref{bb4i} are known as \emph{Signorini
  \emph{or} complementarity conditions}. It has been recently shown in
\cite{GS} that, under the assumptions above, the unique solution of 
\eqref{bb1i}--\eqref{bb4i} is in $C^{1,1/2}(B_1^\pm\cup
B_1')$. This regularity is optimal since the function $u(x) = \Re
(x_1+i|x_n|)^{3/2}$ solves the Signorini problem for the Laplacian and
with thin obstacle $\varphi \equiv 0$.

Henceforth, we assume that $0$ is a free boundary point,
i.e., $0\in\Gamma^\vf(u)$, and we suppose without restriction that
$A(0) = I$. Under these hypothesis, we consider the 
following \emph{normalization} of $u$
\begin{equation}\label{vi}
v(x)=u(x)-\varphi(x') + b x_n,\quad b=\partial_{\nu_+} u(0).
\end{equation}
Note that $\partial_{\nu_+}u(0)+\partial_{\nu_-}u(0)=0$,
which follows from \eqref{bb4i}, by taking a limit  from inside the set
$\{(x',0)\in B_1'\mid u(x', 0)>\varphi(x')\}$, and from the hypothesis
$A(0) = I$. This implies that 
\begin{equation}\label{c00i}
v(0)=|\nabla v(0)|=0.
\end{equation}
Next, note that, in view of \eqref{A} above and of the assumption $\vf\in
C^{1,1}(B_1)$, we have
\begin{equation*}\label{c.5.5i}
-L(\varphi(x')- b x_n)\defeq f\in L^\infty(B_1).
\end{equation*}
Hence, we can rewrite \eqref{bb1i}--\eqref{bb4i} in terms of $v$ as follows:
\begin{align}\label{c11i}
Lv = \div(A\nabla v)=f&\quad\text{in}\  B_1^+\cup B_1^-,\quad
f\in L^{\infty}(B_1),\\
\label{c22i}
v \geq 0&\quad\text{in}\  B_1',\\
\label{c33i}
\langle A\nabla v,\nu_+\rangle +\langle A\nabla v,\nu_-\rangle \geq
  0&\quad\text{in}\  B_1',\\
\label{c44i}
v(\langle A\nabla v,\nu_+\rangle +\langle A\nabla v,\nu_-\rangle
)=0&\quad\text{in}\ B_1'.
\end{align}
Note that from \eqref{c11i} we have
\begin{equation}\label{c55i}
\int_{B_1}\left(\langle A\nabla v,\nabla\eta\rangle + f
  \eta\right)=\int_{B_1'}(\langle A\nabla v,\nu_+\rangle +\langle
A\nabla v,\nu_-\rangle )\eta,\quad\eta\in C^{\infty}_0(B_1).
\end{equation}
and thus the conditions \eqref{c22i}--\eqref{c44i} imply that $v$
satisfies the variational inequality 
$$
\int_{B_1}\left(\langle A\nabla v,\nabla (w-v)\rangle + f (w-v)\right)\geq
0,\quad\text{for any }w\in\mathcal{K}_{v,0},
$$ 
where $\mathcal{K}_{v,0}=\{w\in W^{1,2}(B_1) \mid  w= v \text{ on } \D
B_1, w\geq 0 \text{ on } B_1'\}$. Since $\mathcal{K}_{v,0}$ is convex,
and so is the energy functional
\begin{equation}\label{energy-with-f}
\int_{B_1}\left(\langle A\nabla w,\nabla w\rangle + 2fw\right),
\end{equation}
this is equivalent to saying that
$v$ minimizes \eqref{energy-with-f} among all functions $w\in
\mathcal{K}_{v,0}$. Notice as well that 
\begin{align*}
\Lambda^\vf(u)&=\{u(\cdot, 0)=\varphi\}=\{v(\cdot,0)=0\}=\Lambda^0(v)
= \Lambda(v).
\end{align*}
We will write $\Gamma^\vf(u)=\Gamma^0(v)=\Gamma(v)$, and thus $0\in
\Gamma(v)$ now and \eqref{c00i} holds.

\subsection{Main result}

To state the main result of this paper, we need to further classify
the free boundary points. This is achieved by means of the
\emph{truncated frequency function}
\[
N(r)=N_L(v,r)\defeq
\frac{\sigma(r)}{2}e^{K'r^{\frac{1-\delta}{2}}}
\frac{d}{dr}\log\max\left\{\frac{1}{\sigma(r)r^{n-2}}\int_{S_r}
  v^2\mu, r^{3+\delta}\right\}.
\]
Here, $\mu(x)=\langle A(x)x,x \rangle/|x|^2$ is a conformal factor,
$\sigma(r)$ is an auxiliary function with the property that 
$\sigma(r)/r\to \alpha>0$ as $r\to 0$, $0<\delta<1$, and $K'$ is
a universal constant (see Section~\ref{S:not} for
exact definitions and properties). This function was introduced in
\cite{GS}, and represents a version of Almgren's
celebrated frequency function (see \cite{A}), adjusted for the solutions of
\eqref{c11i}--\eqref{c44i}. By  Theorem~\ref{T:lym} in \cite{GS},
$N(r)$ is monotone increasing and hence 
the limit
$$
\tilde{N}(0+)=\lim_{r\to 0}\tilde{N}(r),\quad\text{where}\quad
\tilde{N}(r)=\frac{r}{\sigma(r)}N(r)
$$
exists.  The remarkable fact is that either $\tilde{N}(0+)=3/2$, or
$\tilde{N}(0+)\ge (3+\delta)/2$, see Lemma~\ref{L:newcrucial}
below. This leads to the following definition.

\begin{dfn} We say that $0\in\Gamma(v)$ is a \emph{regular} point iff
  $\tilde{N}(0+)=3/2$. Shifting the origin to $x_0\in \Gamma(v)$, and denoting
  the corresponding frequency function
by  $\tilde{N}_{x_0}$, we define
$$
\Gamma_{3/2}(v)=\{x_0\in \Gamma(v)\mid \tilde N_{x_0}(0+)=3/2\},
$$
the set of all regular free boundary points, also known as the
\emph{regular set}.
\end{dfn}

The remaining part of the free boundary is divided into the sets
$\Gamma_{\kappa}(v)$, according to the corresponding value of
$\tilde{N}(0+)\defeq \kappa$. We note that the range of
possible values for $\kappa$ can be further refined, provided more
regularity is known for the coefficients $A(x)$. This can be achieved
by replacing the truncation function $r^{3+\delta}$ in the formula
for $N(r)$ with higher powers of $r$, similarly to what was done in
\cite{GP} in the case of the Laplacian. This will provide more
information on the set of possible values of $\tilde{N}(0+)$ which
will serve as a classification parameter.

\medskip
The following theorem is the central result of this paper.

\begin{theo}\label{main} Let $v$ be a solution of
  \eqref{c11i}--\eqref{c44i} with $x_0\in \Gamma_{3/2}(v)$. Then,
  there exists $\eta_0>0$, depending on $x_0$, such that, after a
  possible rotation of coordinate axes in $\R^{n-1}$, one has
  $B'_{\eta_0}(x_0)\cap \Gamma(v)\subset \Gamma_{3/2}(v)$, and
$$
B'_{\eta_0}\cap \Lambda(v)=B'_{\eta_0}\cap \{x_{n-1}\leq g(x_1,\ldots,
x_{n-2})\}
$$
for $g\in C^{1,\beta}(\R^{n-2})$ with a universal exponent
$\beta\in(0,1)$.
\end{theo}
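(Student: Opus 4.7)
The plan is to combine the Weiss-type monotonicity formula (Theorem~\ref{T:weiss}) with the epiperimetric inequality (Theorem~\ref{T:epi}) to extract a polynomial rate of convergence of the Almgren rescalings at a regular point, and then convert that rate into Hölder continuity of the normal vector to $\Gamma_{3/2}(v)$, from which $C^{1,\beta}$ regularity follows by the now-standard scheme initiated by Weiss~\cite{W}. After translating $x_0$ to the origin and, via a $C^{1,1}$ change of variables, reducing to $A(0)=I$ with flat thin manifold, introduce the rescalings $v_r(x):=v(rx)/r^{3/2}$ and the Weiss energy $W(r)$ of Theorem~\ref{T:weiss}. The monotonicity formula controls $W'(r)$ from below by $\frac{2}{r}\bigl(W_{\mathrm{hom}}(r)-W(r)\bigr)$ up to an error of order $r^{\gamma_0-1}$ coming from the Lipschitz character of $A$ and $f\in L^\infty$, while the epiperimetric inequality says that replacing $v_r|_{\D B_1}$ by its $3/2$-homogeneous extension can decrease the (shifted) energy by at most the factor $1-\kappa$. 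Combining the two in the usual Weiss fashion produces the differential inequality
\[
\frac{d}{dr}\bigl(W(r)-W(0+)\bigr)\ge \frac{2\kappa}{(1-\kappa)\,r}\bigl(W(r)-W(0+)\bigr)-Cr^{\gamma_0-1},
\]
and Gronwall yields $0\le W(r)-W(0+)\le C\,r^{2\gamma}$ for some universal $\gamma\in(0,1/2)$.

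Next, the identity relating $\frac{d}{dr}\|v_r\|^2_{L^2(\D B_1)}$ to $W_{\mathrm{hom}}(r)-W(r)$ turns this energy decay into an $L^2(\D B_1)$ Cauchy estimate for $\{v_r\}$, which (via the uniform $C^{1,1/2}$ bound of~\cite{GS} and interpolation) upgrades to
\[
\|v_r-v_s\|_{L^\infty(B_1)}\le C\,\max(r,s)^{\gamma},\qquad 0<s\le r<r_0.
\]
This forces a \emph{unique} blow-up
\[
v_0(x)=c_n\operatorname{Re}\bigl(x'\cdot\nu+i|x_n|\bigr)^{3/2},\qquad \nu=\nu(x_0)\in S^{n-2},
\]
with the quantitative rate $\|v_r-v_0\|_\infty\le C r^\gamma$. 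Upper semicontinuity of $\tilde N_{x_0}(0+)$ together with the gap in its admissible values supplied by Lemma~\ref{L:newcrucial} shows that $\Gamma_{3/2}(v)$ is relatively open near $x_0$; writing the estimate above simultaneously at two nearby regular points $x_0,y_0$ at the common scale $r\sim|x_0-y_0|$, and noting that the map sending a $3/2$-homogeneous profile to its direction is locally Lipschitz on the manifold of such profiles, one concludes that $|\nu(x_0)-\nu(y_0)|\le C|x_0-y_0|^\beta$ for some $\beta\in(0,1)$.

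Finally, the polynomial approximation $\|v_r-v_0\|_\infty\le Cr^\gamma$, together with the non-degeneracy of $v$ at regular points (a consequence of $W(0+)>0$), traps $\Lambda(v)\cap B'_{\eta_0}(x_0)$ inside a narrow cusp around the hyperplane $\{(x'-x_0')\cdot\nu(x_0)=0\}$ whose opening shrinks polynomially with the scale. Combined with the Hölder dependence of $\nu$ just established, a standard ACS-type directional-monotonicity argument in a cone around $\nu(x_0)$ shows that, after rotating $\R^{n-1}$ so that $\nu(x_0)=e_{n-1}$, the set $\Lambda(v)\cap B'_{\eta_0}$ is the subgraph $\{x_{n-1}\le g(x_1,\dots,x_{n-2})\}$ of a $C^{1,\beta}$ function. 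The main obstacle is the very first step: the Lipschitz perturbation $A-I$ and the inhomogeneity $f\in L^\infty$ inject into the Weiss formula error terms of order $r^{\gamma_0}$ that must be strictly dominated by the epiperimetric gain $\kappa$ uniformly down to scale zero. Making this dominance work is what dictates the precise choice of the truncation exponent $\delta$ in $\tilde N$ and the exact form of $W(r)$ in Theorem~\ref{T:weiss}, and is the delicate technical heart of the argument.
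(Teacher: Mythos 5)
Your overall strategy coincides with the paper's: combine Theorem~\ref{T:weiss} with Theorem~\ref{T:epi} to get the differential inequality \eqref{WWW}, deduce the geometric decay $W_L(v,r)\le Cr^\gamma$, convert it into the Cauchy estimate $\int_{S_1}|v_t-v_s|\le Ct^\gamma$ of Lemma~\ref{L:reg}, obtain unique blowups with a rate (Proposition~\ref{P:uniqreg}), and then H\"older continuity of $a_{\bar x}$ and $\nu_{\bar x}$ (Lemma~\ref{a-nu-Holder}). However, there are two genuine gaps.

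First, your non-degeneracy argument is wrong. You claim the blowup is nontrivial ``as a consequence of $W(0+)>0$,'' but in fact $W_L(v,0+)=0$ at every regular point (Lemma~\ref{L:W0nonzero}); indeed \emph{every} $3/2$-homogeneous global solution of the Signorini problem, including the zero function, has vanishing Weiss energy (Remark~\ref{R:weiss}), so the value $W(0+)$ cannot distinguish a degenerate blowup from a nondegenerate one. The paper must instead prove nondegeneracy separately (Proposition~\ref{P:nondege}) by playing the homogeneous rescalings $v_{\bar x,r}=v_{\bar x}(rx)/r^{3/2}$ against the Almgren rescalings $\tilde v_{\bar x,r}=v_{\bar x}(rx)/d_{\bar x,r}$: the frequency gap of Lemma~\ref{L:boundonHr} gives $d_{\bar x,r}\ge c\,r^{(3+\ve)/2}$ with $\ve<2\gamma$, so a zero homogeneous blowup would force $\int_{S_1}|\tilde v_{\bar x,r}|\to 0$, contradicting the convergence of the normalized Almgren blowups to $c_n\Re(\langle x',e'\rangle+i|x_n|)^{3/2}$. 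Without some such input your cusp-trapping step has nothing to trap around.

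Second, your closing step invokes ``a standard ACS-type directional-monotonicity argument.'' This is precisely the method the paper is at pains to avoid: with Lipschitz $A$ and $f\in L^\infty$ (nonzero thin obstacle), differentiating the equation in tangential directions and running a maximum-principle comparison for $\partial_e v$ is not available, which is the stated reason for the energy-based approach. Moreover it is unnecessary: once you have the uniform $C^1$ convergence $v_{\bar x,r}\to v_{\bar x,0}$ (Step~1 of the paper's final proof) together with the explicit form of $v_{\bar x,0}$, the inclusion of a cone $\mathcal{C}_\varepsilon(\nu_{\bar x})$ in $\{v_{\bar x}(\cdot,0)>0\}$ and of $-\mathcal{C}_\varepsilon(\nu_{\bar x})$ in $\{v_{\bar x}(\cdot,0)=0\}$ follows directly from the positivity of $v_{\bar x,0}$, respectively of $(\partial_n^--\partial_n^+)v_{\bar x,0}$, on the relevant spherical caps, combined with the complementarity conditions; the Lipschitz graph and then the $C^{1,\beta}$ regularity follow from these cone inclusions and \eqref{nuHolder}. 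You should replace the directional-monotonicity step by this direct compactness argument, and you should also justify (as in Lemma~\ref{vxr-h}) that the hypothesis $\|w-h\|_{W^{1,2}}\le\theta$ of the epiperimetric inequality actually holds, uniformly in $\bar x$ and $r$, for the suitably normalized rescalings before Lemma~\ref{L:reg} can be applied.
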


This result is known and well-understood in the case $L=\Delta$,
see \cite{ACS} or Chapter 9 in \cite{PSU}. However, the existing
proofs are based on differentiating the equation for $v$ in tangential
directions $e\in \R^{n-1}$ and establishing the nonnegativity of $\D_ev$
in a cone of directions, near regular free boundary points
(directional monotonicity). This implies the Lipschitz regularity of $\Gamma_{3/2}(v)$, which can be
pushed to $C^{1,\beta}$ with the help of the boundary Harnack
principle. The idea of the directional monotonicity goes back at least
to the paper \cite{Alt}, while the application of the boundary Harnack
principle originated in \cite{AC1}; see also
\cite{C2} and the book \cite{PSU}. In the case $L=\Delta$, we
also want to mention two recent papers that prove the smoothness of the
regular set: Koch, the second author, and Shi \cite{KPS} establish the
real analyticity of $\Gamma_{3/2}$ by using hodograph-type transformation and
subelliptic estimates, and De Silva and Savin \cite{DS2} prove
$C^\infty$ regularity of  $\Gamma_{3/2}$  by higher-order boundary
Harnack principle in slit domains.

Taking directional derivatives, however, does not work well for
the problem studied in this paper, particularly so since we are
working with solutions of the non-homogeneous equation \eqref{c11i},
which corresponds to nonzero thin
obstacle $\varphi$. In contrast, the methods in this paper are purely
energy based, and they are new even in the case of the thin obstacle problem
for the Laplacian. They are inspired by the homogeneity improvement
approach of Weiss \cite{W} in the classical obstacle problem. The
latter consists of a combination of a monotonicity formula and an 
epiperimetric inequality. In this connection we mention that recently
in \cite{FGS}, Focardi, Gelli, and Spadaro 
extended Weiss' method to the classical obstacle problem for operators with Lipschitz coefficients.  
We also mention a recent preprint by Koch, R\"uland, and Shi
\cite{KRS1} in which they use Carleman estimates to establish the
almost optimal interior regularity of the solution in the variable
coefficient 
Signorini problem, when the coefficient matrix is $W^{1,p}$, with
$p>n+1$. In a personal communication \cite{KRS2} these authors have
informed us of work in progress on the optimal interior regularity as
well as the $C^{1,\beta}$ regularity of the regular set for $W^{1,p}$
coefficients with $p>2(n+1)$. Their preprint was not available to us
when the present paper was completed. 

We next describe our proof of Theorem~\ref{main} above. The first main
ingredient consists of the ``almost monotonicity'' of the Weiss-type functional
\[
W_L(v,r) = \frac{1}{r^{n+1}}\int_{B_r}[\langle A(x)\nabla v, \nabla
v\rangle +vf] - \frac{3/2}{r^{n+2}}\int_{S_r}v^2\mu,
\]
for solutions of \eqref{c11i}--\eqref{c44i}. In Theorem~\ref{T:weiss}
below we prove that $W_L(v,r)+Cr^{1/2}$ is nondecreasing for a
universal constant $C$. Here, we were inspired by \cite{W} and
\cite{W2}, where Weiss introduced related monotonicity formulas in the
classical obstacle problem. In \cite{GP} two of us also proved a
similar monotonicity formula in 
  the Signorini setting, in the case of the Laplacian. In the present
  paper we use the machinery established in \cite{GS} to treat the
  case of variable Lipschitz coefficients. 
We mention that the geometric meaning of the functional $W_L$ above is
that it measures the closeness of the 
solution $v$ to the prototypical homogeneous solutions of degree
$3/2$, i.e., the functions 
$$
a\Re(\langle x',\nu\rangle+i|x_n|)^{3/2},\quad a\geq 0,\ \nu\in S_1.
$$

The second central ingredient in the proof is the epiperimetric inequality for
the functional 
$$
W(v)=W_\Delta(v,1)=\int_{B_1}|\nabla v|^2-\frac32\int_{S_1}v^2, 
$$
which states that if a $(3/2)$-homogeneous function $w$, nonnegative
on $B_1'$, is close to the
solution $h(x)=\Re(x_1+i|x_n|)^{3/2}$ in $W^{1,2}(B_1)$-norm, then
there exists $\zeta$ in $B_1$ with $\zeta=w$ on $\partial B_1$
such that
$$
W(\zeta)\leq (1-\kappa)W(w),
$$ 
for a universal $0<\kappa<1$, see Theorem~\ref{T:epi} below. 

The combination of Theorems \ref{T:weiss} and Theorem~\ref{T:epi}
provides us with a powerful tool for esta\-blishing the following
geometric rate of decay for the Weiss functional:
$$
W_L(v,r)\leq C r^\gamma,
$$
for a universal $\gamma>0$. In turn, this ultimately implies that
$$
\int_{S_1'} |v_{\bar x,0}-v_{\bar y,0}|\leq C|\bar x-\bar y|^\beta
$$ 
for properly defined homogeneous blowups $v_{\bar x,0}$ and $v_{\bar y,0}$ at
$\bar x, \bar y\in \Gamma_{3/2}(v)$. This finally implies the
$C^{1,\beta}$ regularity of $\Gamma_{3/2}(v)$ in a more or less
standard fashion.

\subsection{Structure of the paper}
The paper is organized as follows. 

\begin{itemize}
\item In Section \ref{S:not} we
recall those definitions and results from \cite{GS} which constitute
the background results of this paper. 

\item In Section \ref{S:reg} we give a more in depth
look at regular free boundary points
and prove some preliminary but important properties such as the
relative openness of the regular set $\Gamma_{3/2}(v)$ in $\Gamma(v)$
and the local uniform convergence of the truncated frequency function
$\tilde{N}_{\bar{x}}(r)\rightarrow
3/2$ on $\Gamma_{3/2}(v)$. We also introduce Almgren type scalings for
the solutions, see \eqref{Almgrentype}, which play an important
role in Section \ref{S:final}.

\item In Section \ref{S:weiss} we establish the first main technical tool
  of this paper, the Weiss-type monotonicity formula discussed above
  (Theorem~\ref{T:weiss}). This result is instrumental to studying the
  homogeneous
  blowups of our function, which we do in Section~\ref{S:hbu}.

\item Section \ref{S:epi} is devoted to proving the second main
  technical result of this paper, the epiperimetric inequality (Theorem
  \ref{T:epi}) which we have discussed above. 

\item Finally, in Section~\ref{S:final} we combine the monotonicity
  formula and the epiperimetric inequa\-lity to prove the main result
  of this paper, the $C^{1,\beta}$-regularity of the regular set
  $\Gamma_{3/2}(v)$ (Theorem~\ref{main}).
\end{itemize}

\section{Notation and preliminaries}\label{S:not}
\subsection{Basic notation}Throughout the paper we use following
notation. We work in the
Euclidean space $\R^n$, $n\geq 2$. We write the points of
$\R^n$ as $x=(x',x_n)$, where
$x'=(x_1,\ldots,x_{n-1})\in\R^{n-1}$. Very often, we identify the
points $(x',0)$ with $x'$, thus identifying the ``thin'' space
$\R^{n-1}\times\{0\}$ with $\R^{n-1}$.

For $x\in \R^n$, $x'\in\R^{n-1}$ and $r>0$, we define the ``solid'' and
``thin'' balls 
\begin{alignat*}{2}
B_r(x)&=\{y\in\R^n \mid |x-y|<r\},&\quad  B'_r(x')&=\{y'\in\R^{n-1}
\mid |x'-y'|<r\}\\
\intertext{as well as the corresponding spheres}
S_r(x)&=\{y\in\R^n \mid |x-y|=r\},& S'_r(x')&=\{y'\in\R^{n-1} \mid |x'-y'|=r\}.
\end{alignat*}
We typically do not indicate the center, if it is the origin. Thus,
$B_r=B_r(0)$, $S_r=S_r(0)$, etc.
We also denote
$$
B_r^\pm(x',0)=B_r(x',0)\cap \{\pm x_n>0\},\quad \R^n_\pm=\R^n\cap \{\pm x_n>0\}.
$$
For a given direction $e$, we denote the corresponding directional
derivative by
$$
\partial_e u=\langle \nabla u,e\rangle,
$$
whenever it makes sense. For the standard coordinate directions
$e=e_i$, $i=1,\ldots,n$, we also abbreviate $\partial_i u=\partial_{e_i} u$.

In the situation when a domain $\Omega\subset\R^n$ is divided by a
manifold $\mathcal{M}$ into two subdomains $\Omega_+$ and $\Omega_-$, 
$\nu_+$ and $\nu_-$ stand
for the exterior unit normal for $\Omega_+$ and $\Omega_-$ on
$\mathcal{M}$. Moreover, we always understand $\partial_{\nu_+}u$
($\partial_{\nu_-}u$) on $\mathcal{M}$ as
the limit from within $\Omega_+$ ($\Omega_-$). Thus, when
$\Omega_\pm=B_r^\pm$, we have
$$
\nu_\pm=\mp e_n,\quad -\partial_{\nu_+}u(x',0)=\lim_{\substack{y\to
    (x',0)\\y\in
    B_r^+}}\partial_{n}u \defeq \partial_n^+u(x',0),\quad  \partial_{\nu_-}u(x',0)=\lim_{\substack{y\to
    (x',0)\\y\in B_r^-}}\partial_{n}u\defeq \partial_n^-u(x',0)
$$

In integrals, we often do not indicate the measure of integration if it
is the Lebesgue measure on subdomains of $\R^n$, or the Hausdorff
$\mathcal{H}^{k}$ measure on manifolds of dimension $k$.

Hereafter, when we say that a constant is universal, we mean that it
depends exclusively on $n$, on the ellipticity bound $\la$ on $A(x)$,
and on the Lipschitz bound $Q$ on the coefficients
$a_{ij}(x)$. Likewise, we will say that $O(1)$, $O(r)$, etc, are
universal if $|O(1)| \le C$, $|O(r)|\le C r$, etc, with $C\ge 0$
universal.

\subsection{Summary of known results}

For the convenience of the reader, in this section we briefly recall the definitions and results proved in \cite{GS} which will be used in this paper.

As stated in Section \ref{S:intro}, we work under the nonrestrictive
situation in which the thin manifold $\mathcal{M}$ is flat. More
specifically, we consider the Signorini problem \eqref{mp0} when the
thin obstacle $\vf$ is defined on
 \[
 B_1' = \mathcal M \cap B_1= \{(x',0)\in B_1\mid |x'|<1\}.
 \]
We assume that $\varphi\in C^{1,1}(B_1')$ and denote by $u$ the unique
solution to the minimization problem \eqref{mp0}, which then satisfies \eqref{bb1i}--\eqref{bb4i}. We assume that $0$ is a free boundary point, 
i.e., $0\in\Gamma^\vf(u)$, and that $A(0) = I$, and we consider the
normalization of $u$ as in \eqref{vi}, i.e., 
$$
v(x)=u(x)-\varphi(x') + b x_n,\quad\text{with }b=\partial_{\nu_+} u(0).
$$
As remarked on Section \ref{S:intro}, $v$ satisfies
\eqref{c11i}--\eqref{c44i} with $f\defeq -L(\varphi(x')- b
x_n)\in L^\infty(B_1)$, and has the additional property that
$v(0)=|\nabla v(0)|=0$, see \eqref{c00i} above.

We then recall the following definitions from \cite{GS}. The
\emph{Dirichlet integral} of $v$ in $B_r$ is defined by
\[
D(r)=D_L(v,r)=\int_{B_r}\langle A(x)\nabla v,\nabla v\rangle ,
\] 
and the \emph{height function} of $v$ in $S_r$ is given by
\begin{equation}\label{Hv}
H(r)= H_L(v,r) = \int_{S_r}v^2 \mu , 
\end{equation}
where $\mu$ is the \emph{conformal factor}
\[
\mu(x)=\mu_L(x)=\frac{\langle A(x)x,x\rangle }{|x|^2}.
\]
We notice that, when $A(x) \equiv I$, then $\mu \equiv 1$. We also
define the \emph{generalized energy} of $v$ in $B_r$
\begin{equation}\label{Iv}
I(r) = I_L(v,r) = \int_{S_r} v \langle A\nabla v,\nu\rangle=
D_L(v,r)+\int_{B_r} vf 
\end{equation}
where $\nu$ indicates the outer unit normal to $S_r$. The following
result is Lemma 4.4 in \cite{GS}.

\begin{lemma}\label{L:H'i}
The function $H(r)$ is absolutely continuous, and for a.e.\ $r\in (0,1)$ one has
\begin{equation}\label{H'2i}
H'(r)= 2 I(r) + \int_{S_r} v^2 L|x|. 
\end{equation}
\end{lemma}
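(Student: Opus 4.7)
The plan is to derive both assertions from a single identity that recasts $rH(r)$ as a volume integral over $B_r$. Since the exterior unit normal to $S_r$ is $\nu=x/r$, we have $v^{2}\mu = r^{-1}v^{2}\langle A(x)x,\nu\rangle$ on $S_r$, so multiplying by $r$ and applying the divergence theorem on $B_r$ to the vector field $X(x)=v^{2}A(x)x$ yields
\[
rH(r)=\int_{S_r}v^{2}\langle A(x)x,\nu\rangle\,d\H^{n-1}=\int_{B_r}\div\bigl(v^{2}A(x)x\bigr)\,dx.
\]
Since $v\in C^{1,1/2}\cap L^{\infty}$ by \cite{GS} and $A$ is Lipschitz, $v^{2}A(x)x\in W^{1,1}(B_1)$ and its divergence belongs to $L^{1}(B_1)$. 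Thus the right-hand side is absolutely continuous in $r$; dividing by $r$ gives the absolute continuity of $H$ on each compact subinterval of $(0,1)$, while the standard coarea/differentiation-of-integral argument produces
\[
H(r)+rH'(r) = \int_{S_r}\div\bigl(v^{2}A(x)x\bigr)\,d\H^{n-1}\quad\text{for a.e.}\ r\in(0,1).
\]

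Next I would expand the divergence explicitly. The product rule together with the symmetry of $A$ gives
\[
\div(v^{2}A(x)x)=2v\,\langle A(x)\nabla v,x\rangle + v^{2}\,\div(A(x)x),
\]
and the key identity to handle the second summand is $A(x)x = |x|\,A(x)\nabla|x|$ (since $\nabla|x|=x/|x|$); combining this with the product rule for the divergence of $|x|\,A\nabla|x|$ yields
\[
\div(A(x)x)=\langle \nabla|x|,A(x)\nabla|x|\rangle + |x|\,L|x|=\mu(x)+|x|\,L|x|,
\]
where the first summand was recognized as $\mu(x)$ directly from its definition.

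Substituting these identities back and evaluating on $S_r$, where $|x|=r$ and $\langle A(x)\nabla v,x\rangle = r\,\langle A(x)\nabla v,\nu\rangle$, the right-hand side becomes $2r\,I(r)+H(r)+r\!\int_{S_r}v^{2}\,L|x|$. Equating this with $H(r)+rH'(r)$ and dividing by $r$ gives precisely \eqref{H'2i}. The only point requiring genuine care is the justification of the divergence theorem and the coarea differentiation, but both are standard since $v^{2}A(x)x\in W^{1,1}(B_1)$; beyond that the proof is a matter of direct bookkeeping, and I do not expect any serious obstacle.
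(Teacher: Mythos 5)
Your proof is correct: the identity $rH(r)=\int_{B_r}\div(v^2A(x)x)$, the expansion $\div(A(x)x)=\mu(x)+|x|\,L|x|$, and the resulting cancellation of $H(r)$ give exactly \eqref{H'2i}. The paper does not reproduce a proof (it cites Lemma 4.4 of \cite{GS}), but your argument is essentially the same divergence-theorem computation used there, so there is nothing further to reconcile.
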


As it was explained in \cite{GS}, the second term in the right-hand
side of \eqref{H'2i} above
represents a serious difficulty to overcome if one wants to establish
the monotonicity of the generalized frequency. To bypass this
obstacle, one of the main ideas in \cite{GS} was the introduction of
the following auxiliary functions, defined for $v$ satisfying \eqref{c11i}--\eqref{c44i} and $0<r<1$:
\begin{equation}\label{psi-sig-def}
\psi(r)=e^{\int_0^r G(s)ds},\quad
\sigma(r)=\frac{\psi(r)}{r^{n-2}},\quad\text{where}\quad
G(r) = \begin{cases}
\frac{\int_{S_r} v^2 L|x|}{\int_{S_r} v^2 \mu},&\text{if}\ H(r) \not= 0,
\\
\frac{n-1}{r},&\text{if}\ H(r) = 0.
\end{cases}
\end{equation}
When $L = \Delta$, it is easy to see that $\psi(r) = r^{n-1}$ and that
$\sigma(r) = r$. We have the following simple and useful lemma which
summarizes the most relevant properties of $\psi(r)$ and $\sigma(r)$.

\begin{lemma}\label{L:psi1}
There exists a universal constant $\beta\ge 0$ such that 
\begin{equation}\label{per4}
\frac{n-1}{r}-\beta \le\frac{d}{dr} \log \psi(r)\le \frac{n-1}{r} +
\beta,\quad 0<r<1,
\end{equation}
and  one has
\begin{equation}\label{per5}
e^{-\beta(1-r)} r^{n-1}
\le  \psi(r) \le  e^{\beta(1-r)}r^{n-1},\quad 0<r<1.
\end{equation}
This implies, in particular, $\psi(0+) = 0$. In terms of the function
$\sigma(r)=\psi(r)/r^{n-2}$ we have
\begin{equation}
  \label{sig3.5}
\left| \frac{d}{dr} \log \frac{\sigma(r)}{r} \right|\leq \beta,\quad 0<r<1  
\end{equation}
and 
\begin{equation}\label{sig4}
e^{-\beta(1-r)}\le \frac{\sigma(r)}{r} \le  e^{\beta(1-r)},\quad 0<r<1.
\end{equation}
In particular, $\sigma(0+) = 0$.
\end{lemma}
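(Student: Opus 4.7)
The essential observation is that every conclusion in the lemma follows from one pointwise estimate: $|G(r) - (n-1)/r| \le \beta$ for a universal $\beta$. Once this is in hand, \eqref{per4} is immediate since $\frac{d}{dr}\log\psi(r) = G(r)$; the envelope \eqref{per5} follows by integrating the logarithmic derivative of $\psi$ (with the normalization implicit in the defining formula); $\psi(0+)=0$ is a direct consequence of \eqref{per5}; and the statements \eqref{sig3.5}, \eqref{sig4} for $\sigma$ drop out because $\sigma(r)/r = \psi(r)/r^{n-1}$ and $\frac{d}{dr}\log(\sigma(r)/r) = G(r) - (n-1)/r$. The plan is therefore to concentrate on producing the pointwise bound.

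The first step is to unpack $L|x| = \operatorname{div}(A(x)\nabla|x|)$. Using $\nabla|x|=x/|x|$ and expanding directly,
$$
L|x| = \frac{\operatorname{tr} A(x) - \mu(x)}{|x|} + \frac{1}{|x|}\sum_{i,j}(\D_i a_{ij}(x))\,x_j.
$$
Because $A(0) = I$ and $A$ is Lipschitz with constant $Q$, one has $\operatorname{tr} A(x) = n + O(|x|)$ and $\mu(x) = 1 + O(|x|)$, while the second term on the right is $O(1)$ by $|\D_i a_{ij}|\le Q$. The two $O(|x|)$ contributions combine with the $1/|x|$ prefactor to leave a universally bounded remainder, so on the sphere $S_r$,
$$
L|x| = \frac{n-1}{r} + O(1),
$$
with $O(1)$ depending only on $n$, $\la$, and $Q$.

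The second step combines this with the analogous expansion $\mu = 1 + O(r)$ on $S_r$ and the uniform ellipticity bound $\mu \ge \la$. For $r$ with $H(r)\ne 0$, rewriting
$$
G(r) - \frac{n-1}{r} = \frac{\int_{S_r} v^2\bigl(L|x| - (n-1)\mu/r\bigr)}{\int_{S_r} v^2\,\mu}
$$
one reads off that the numerator integrand is $O(1)$ pointwise while the denominator dominates $\la\int_{S_r}v^2$; the ratio is then universally bounded, proving \eqref{per4}. The case $H(r)=0$ is trivial by convention. The rest of the lemma is one-variable integration against the derivative bound, together with the identification $\sigma/r=\psi/r^{n-1}$.

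The one point that needs genuine care, and what I would flag as the main obstacle, is the cancellation $\operatorname{tr} A(x) - \mu(x) = (n-1) + O(|x|)$: without it, $L|x|$ would carry an additional $1/|x|$-singular term, $G(r) - (n-1)/r$ would fail to be integrable at $r=0$, and the very definition of $\psi$ via a logarithmic derivative with universal remainder would break down. This is precisely the place where the normalization $A(0)=I$ and the Lipschitz regularity of $A$ enter in an essential way.
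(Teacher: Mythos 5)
Your proposal is correct and follows the same route as the paper's source for this lemma (the paper itself only cites \cite{GS} here, and elsewhere quotes exactly your key identity $L|x|=\frac{n-1}{|x|}(1+O(|x|))$, Lemma~4.1 of \cite{GS}): expand $L|x|$, use $A(0)=I$ and the Lipschitz bound to get the cancellation $\operatorname{tr}A(x)-\mu(x)=(n-1)+O(|x|)$, bound $G(r)-\frac{n-1}{r}$ by comparing numerator and denominator via ellipticity, and integrate, noting the normalization $\psi(1)=1$. The only cosmetic caveats are that $\partial_i a_{ij}$ exists only a.e.\ (Rademacher), which suffices, and your correct observation that the defining formula for $\psi$ must be read with the normalization at $r=1$ for \eqref{per5} to come out as stated.
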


The next result is essentially Lemma 5.6 from \cite{GS}.

\begin{lemma}\label{L:nice}
There exist $\alpha>0$ such that
\begin{equation}\label{wltfm}
\left|\frac{\sigma(r)}{r} - \alpha\right| \le \beta e^\beta
r,\quad r\in (0,1),
\end{equation}
for $\beta$ as in Lemma~\ref{L:psi1}.
In particular,
\begin{equation}\label{wcifm}
\alpha=\lim_{r\to 0+} \frac{\sigma(r)}{r}.
\end{equation}
Moreover, we also have that $e^{-\beta}\leq\alpha\leq e^\beta$.
\end{lemma}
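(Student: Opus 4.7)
The plan is to extract both the existence of $\alpha$ and the quantitative rate \eqref{wltfm} directly from the bound $|\tfrac{d}{dr}\log(\sigma(r)/r)|\le \beta$ recorded in Lemma~\ref{L:psi1}. Since $\log\psi(r)=\int_0^r G(s)\,ds$ is absolutely continuous on every compact subinterval of $(0,1)$ and $G(s)-(n-1)/s$ is bounded by $\beta$ in absolute value, the function $r\mapsto \log(\sigma(r)/r)$ is $\beta$-Lipschitz on $(0,1)$. For $0<r_1<r_2<1$ we therefore have
\[
\left|\log\frac{\sigma(r_2)}{r_2}-\log\frac{\sigma(r_1)}{r_1}\right|\le \beta(r_2-r_1),
\]
so the Cauchy criterion yields that $\lim_{r\to 0+}\log(\sigma(r)/r)$ exists as a real number, and denoting it by $\log\alpha$ produces \eqref{wcifm} with $\alpha>0$. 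Passing to the limit as $r\to 0+$ in \eqref{sig4} then gives $e^{-\beta}\le \alpha\le e^\beta$.

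For the quantitative estimate \eqref{wltfm}, the $\beta$-Lipschitz property of $\log(\sigma/r)$ extends to $[0,1)$ by the continuous extension just obtained, so $s\mapsto\sigma(s)/s$ is absolutely continuous on $[0,r]$ for every $r\in(0,1)$. I would then compute
\[
\frac{\sigma(r)}{r}-\alpha = \int_0^r \frac{d}{ds}\frac{\sigma(s)}{s}\,ds = \int_0^r \frac{\sigma(s)}{s}\,\frac{d}{ds}\log\frac{\sigma(s)}{s}\,ds
\]
and bound the integrand in absolute value by $e^{\beta(1-s)}\cdot \beta\le \beta e^\beta$, using \eqref{sig4} together with \eqref{sig3.5}. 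Integrating in $s$ from $0$ to $r$ gives $|\sigma(r)/r-\alpha|\le \beta e^\beta r$, which is \eqref{wltfm}.

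Nothing in this argument looks delicate; everything is a direct consequence of the bounds on $G$ already established in Lemma~\ref{L:psi1}. The only point that warrants a line of justification is the absolute continuity of $\sigma/r$ up to the endpoint $s=0$, which I would handle by first noting absolute continuity on $[\varepsilon,r]$ for every $\varepsilon>0$, and then passing to the limit $\varepsilon\to 0+$ via the dominated convergence theorem (the integrand is uniformly bounded) together with the continuous extension of $\sigma/r$ to $r=0$ established in the first step.
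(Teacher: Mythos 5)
Your argument is correct and is the standard one: the paper itself defers to Lemma 5.6 of \cite{GS}, whose proof likewise integrates the derivative bound $\left|\frac{d}{ds}\frac{\sigma(s)}{s}\right|=\frac{\sigma(s)}{s}\left|\frac{d}{ds}\log\frac{\sigma(s)}{s}\right|\le \beta e^{\beta}$ coming from \eqref{sig3.5} and \eqref{sig4}. Your handling of the endpoint $s=0$ (Lipschitz continuity of $\log(\sigma(r)/r)$ giving the existence of $\alpha$, then dominated convergence to extend the fundamental theorem of calculus to $[0,r]$) is sound and complete.
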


With $\psi$ as in \eqref{psi-sig-def}, we now define
\begin{equation}\label{Mii}
M_L(v,r) = \frac{1}{\psi(r)}H_L(v,r),\quad J_L(v,r) = \frac{1}{\psi(r)}I_L(v,r).
\end{equation}
The next relevant formulas are those of 
$J'(r)=\frac{d}{dr}J_L(v,r)$ and  $M'(r)=\frac{d}{dr}M_L(v,r)$.
Using formula (5.28) in \cite{GS}, we have
\begin{align}\label{J'Jnew}
J'(r) & = \left(- \frac{\psi'(r)}{\psi(r)} +\frac{n-2}{r}+O(1)\right)
        J(r)+ \frac{1}{\psi(r)} \bigg\{2\int_{S_r}\frac{\langle
        A\nabla v,\nu\rangle^2}{\mu}\\
&\qquad\mbox{}-\frac{2}{r}\int_{B_r}\langle Z,\nabla v\rangle f
  -\left(\frac{n-2}{r}+O(1)\right)\int_{B_r}v f +\int_{S_r} v
  f\bigg\},
\notag
\end{align} 
where the vector field $Z$ is given by 
\begin{equation}\label{Z}
Z = \frac{rA\nabla r}{\mu}=\frac{A(x)x}{\mu}.
\end{equation}
We also recall that (5.26) in \cite{GS} gives 
\begin{equation}\label{M'new}
M'(r) = 2 J(r).
\end{equation}
The central result in \cite{GS} is the following monotonicity formula.

\begin{theo}[Monotonicity of the truncated frequency]\label{T:lym} Let
  $v$ satisfy \eqref{c00i}--\eqref{c44i} with $f\in
  L^\infty(B_1)$. Given $\delta\in (0,1)$ there exist universal
  numbers $r_0, K'>0$, depending also on $\delta$ and
  $\|f\|_{L^\infty}$,  such that
the function
\begin{equation}\label{nNN}
N(r) = N_L(v,r) \defeq  \frac{\sigma(r)}{2} e^{K'
  r^{\frac{1-\delta}{2}}} \frac{d}{dr} \log \max
\left\{M_L(v,r),r^{3+\delta}\right\}.
\end{equation}
is monotone non-decreasing on $(0,r_0)$.
\end{theo}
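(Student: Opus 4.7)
\medskip
\noindent\textbf{Proof plan for Theorem~\ref{T:lym}.}

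The plan is to split the interval $(0,r_0)$ into two (relatively open) pieces according to which term inside the maximum is active, and check monotonicity separately on each piece; continuity of $N(r)$ and a sign check at transition points then patches the two regimes together. On the set $\{M_L(v,r)\le r^{3+\delta}\}$ the $\max$ equals $r^{3+\delta}$, so
\[
N(r)=\frac{\sigma(r)}{2}\,e^{K' r^{(1-\delta)/2}}\cdot\frac{3+\delta}{r}
=\frac{3+\delta}{2}\cdot\frac{\sigma(r)}{r}\,e^{K' r^{(1-\delta)/2}}.
\]
Lemma~\ref{L:psi1} gives $|(\log(\sigma/r))'|\le \beta$ while the exponential factor contributes $K'\tfrac{1-\delta}{2}r^{-(1+\delta)/2}$ to the logarithmic derivative; choosing $K'$ large (depending on $\beta$ and $r_0$) makes $N(r)$ non-decreasing on this set without any work.

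The substantive case is $\{M_L(v,r)>r^{3+\delta}\}$, where $N(r)=\tfrac{\sigma(r)}{2}e^{K'r^{(1-\delta)/2}}\,M'(r)/M(r)$. Taking logarithmic derivatives,
\[
\frac{N'(r)}{N(r)}
=\frac{d}{dr}\log\sigma(r)+K'\tfrac{1-\delta}{2}r^{-(1+\delta)/2}
+\frac{J'(r)}{J(r)}-\frac{M'(r)}{M(r)},
\]
using \eqref{M'new}. The main algebraic engine is the Cauchy--Schwarz inequality on $S_r$,
\[
I(r)^2=\Bigl(\int_{S_r}v\langle A\nabla v,\nu\rangle\Bigr)^{2}
\le \Bigl(\int_{S_r}\tfrac{\langle A\nabla v,\nu\rangle^{2}}{\mu}\Bigr)\Bigl(\int_{S_r}v^{2}\mu\Bigr),
\]
which, after dividing by $\psi(r)^{2}$, rewrites as
\[
\frac{2}{\psi(r)}\int_{S_r}\tfrac{\langle A\nabla v,\nu\rangle^{2}}{\mu}\;\ge\;\frac{2J(r)^{2}}{M(r)}=J(r)\,\frac{M'(r)}{M(r)}.
\]
Plugging this into \eqref{J'Jnew} and dividing by $J(r)$, the leading radial terms $-\psi'/\psi+(n-2)/r$ cancel against $(\log\sigma)'=(\log\psi)'-(n-2)/r$, giving
\[
\frac{N'(r)}{N(r)}\;\ge\; K'\tfrac{1-\delta}{2}r^{-(1+\delta)/2}+O(1)+\frac{\mathcal E(r)}{J(r)},
\]
where $\mathcal E(r)$ collects the three $f$-terms in \eqref{J'Jnew}, namely the boundary integral $\tfrac{1}{\psi(r)}\int_{S_r}vf$ and the two bulk integrals involving $\int_{B_r}\langle Z,\nabla v\rangle f$ and $\int_{B_r}vf$.

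The key (and hardest) step is to dominate $\mathcal E(r)/J(r)$ by a constant multiple of $r^{-(1+\delta)/2}$, so that the $K'$ term absorbs it. This is where the truncation $M_L(v,r)>r^{3+\delta}$ is decisive: it gives a lower bound $H(r)\ge\psi(r)\,r^{3+\delta}\gtrsim r^{n+1+\delta}$, hence quantitative lower bounds on $\|v\|_{L^{2}(S_r)}$ and, via standard Caccioppoli/energy estimates proved in \cite{GS}, on the relevant integrals of $v$ and $\nabla v$ appearing in $J(r)$. Using $|f|\le\|f\|_{L^\infty}$ together with Cauchy--Schwarz on each piece of $\mathcal E(r)$, one bounds each numerator by $\|f\|_\infty$ times a suitable Sobolev norm of $v$ on $B_r$, and then compares with $J(r)$ by means of the frequency-type bounds already available from Theorem~3.1 and the $H,D,I$ estimates of \cite{GS}. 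The resulting quotient is $O(r^{(1-\delta)/2-\varepsilon})$ or better, which is absorbed by picking $K'$ large enough, depending on $\delta$ and $\|f\|_\infty$.

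Finally, to conclude monotonicity of $N(r)$ globally on $(0,r_0)$, I note that $N(r)$ is continuous on this interval (it is explicitly continuous at transition points of the $\max$), so it suffices to check that at each transition from the ``truncation-active'' regime to the ``$M$-active'' regime the left and right derivative signs are consistent; this follows from the fact that when $M(r)=r^{3+\delta}$ we have $M'(r)/M(r)\ge (3+\delta)/r$ just after the transition (since $M$ must grow at least as fast as $r^{3+\delta}$ to exit the truncated regime), so the $M$-active expression for $N(r)$ is at least the truncation value at the transition point. Putting the two regimes together proves monotonicity on all of $(0,r_0)$. The main obstacle throughout is the control of the $f$-error terms $\mathcal E(r)/J(r)$; everything else is a fairly mechanical combination of \eqref{J'Jnew}, \eqref{M'new}, Lemma~\ref{L:psi1}, and Cauchy--Schwarz.
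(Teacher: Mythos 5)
First, a point of reference: Theorem~\ref{T:lym} is not proved in this paper at all --- it is quoted verbatim from \cite{GS}, where it is the central result --- so there is no internal proof to compare against. Your architecture (split according to which term of the $\max$ is active; on the nontrivial set use \eqref{J'Jnew}, \eqref{M'new} and Cauchy--Schwarz on $S_r$ so that the leading radial terms cancel; use the truncation lower bound $H(r)\ge \psi(r)r^{3+\delta}\gtrsim r^{n+2+\delta}$ to control the $f$-terms; choose $K'$ large to absorb them) is indeed the architecture of the proof in \cite{GS} and of its antecedents in \cite{GP} and \cite{CSS}.

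There is, however, a genuine gap at precisely the step you flag as the hardest. Your reduction to the bound $|\mathcal E(r)|/J(r)\le Cr^{-(1+\delta)/2}$ presupposes a positive lower bound on $J(r)=\tfrac{1}{\psi(r)}\bigl(D(r)+\int_{B_r}vf\bigr)$, equivalently on $N(r)$ itself, and no such bound is available at this stage of the argument: $\int_{B_r}vf$ may be negative, and the truncation hypothesis $M(r)>r^{3+\delta}$ is a lower bound on $H(r)$, from which Caccioppoli/energy estimates give \emph{upper} bounds on $D(r)$, not the lower bounds on ``the relevant integrals of $v$ and $\nabla v$ appearing in $J(r)$'' that you invoke. (A quantitative lower bound on the frequency is essentially what the theorem is meant to produce; it cannot be fed into its own proof.) For the same reason, dividing \eqref{J'Jnew} by $J(r)$ and working with $N'/N$ is illegitimate where $J\le 0$: the inequality reverses. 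The correct route keeps the estimate in undivided form, $N'\ge \frac{c(r)}{M(r)}\bigl[\bigl(K'\tfrac{1-\delta}{2}r^{-(1+\delta)/2}+O(1)\bigr)J(r)+\mathcal E(r)\bigr]$ with $c(r)=\sigma(r)e^{K'r^{(1-\delta)/2}}>0$, and then bounds each $f$-term by quantities reabsorbable into the nonnegative Cauchy--Schwarz excess $\tfrac{2}{\psi}\int_{S_r}\langle A\nabla v,\nu\rangle^2/\mu-2J^2/M$, into $\varepsilon D(r)/\psi(r)$, or into $Cr^{-(1+\delta)/2}$ times controlled quantities via the truncation; this bookkeeping is the actual content of the proof and is missing from your sketch. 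Two smaller points: your final rate ``$O(r^{(1-\delta)/2-\varepsilon})$'' is inconsistent with your own (correct) target $O(r^{-(1+\delta)/2})$; and $N(r)$ is \emph{not} continuous across transitions of the $\max$ --- the derivative of a maximum of two functions can jump where they cross --- so the patching must go through the absolute continuity of $\log\max\{M(r),r^{3+\delta}\}$ and an a.e.\ identification of its derivative on the closed set $\{M\le r^{3+\delta}\}$ (together with the one-sided comparison you indicate, which as stated is only an integrated, not pointwise, inequality), rather than through continuity of $N$.
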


We call $N(r)$ the \emph{truncated frequency function}, by analogy
with Almgren's frequency function \cite{A} (see \cites{GS,GP,CSS} for
more insights on this kind of formulas). 

We then define a modification of $N$ as follows:
\[
\tilde{N}(r)=\tilde{N}_L(v,r)\defeq \frac{r}{\sigma(r)}N_L(v,r) =
\frac{r}{2}e^{K'r^{\frac{1-\delta}{2}}}\frac{d}{dr}\log\max\{M_L(v,r),
r^{3+\delta}\}.
\]
We notice that by Theorem~\ref{T:lym} the limit $N(0+)$ exists. Combining
that with Lemma~\ref{L:nice} above, which states that
$\lim\limits_{r\rightarrow 0+}\frac{\sigma(r)}{r}=\alpha>0$, we see
that  $\tilde{N}(0+)$ also exists. 

The following lemma provides a summary of estimates which are crucial
for our further study. The lower bound on $\tilde{N}(0+)$ is proved in
Lemma 6.3 in \cite{GS} (whose proof contains also that of the gap
on the possible values of $\tilde{N}(0+)$), the bound on $|v(x)|$ is
Lemma 6.6 and the bound on $|\nabla v(x)|$ is proved in Theorem 6.7
there.

\begin{lemma}\label{L:newcrucial}
Let $v$ satisfy \eqref{c00i}--\eqref{c44i} with $f\in
  L^\infty(B_1)$, and let $r_0\in(0,1/2]$ be as in
  Theorem~\ref{T:lym}. Then, $\tilde{N}(0+)\ge \frac 32$, and actually
  $\tilde{N}(0+)=\frac 32$ or $\tilde{N}(0+)\ge \frac{3+\delta}{2}$.
  
  Moreover, there exists a universal $C$ depending also on  $\delta$,
  $H(r_0)$ and $\|f\|_{L^\infty(B_1)}$ such that
\begin{equation}\label{vbounds}
|v(x)|\le C |x|^{3/2},\quad |\nabla v(x)| \le C |x|^{1/2},\quad |x|\le r_0.
\end{equation}
\end{lemma}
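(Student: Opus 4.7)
The plan is to establish both statements by combining the monotonicity of the truncated frequency $N(r)$ (Theorem~\ref{T:lym}) with an Almgren-type blow-up argument, using the known classification of homogeneous global solutions to the Signorini problem for the Laplacian---available here because $A(0)=I$ makes the frozen-coefficient problem at the origin the constant-coefficient one.

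First I would read off from Theorem~\ref{T:lym} and Lemma~\ref{L:nice} that $N(r)$ is bounded on $(0,r_0)$ by a constant depending on $H(r_0)$ and $\|f\|_{L^\infty}$ (through $N(r_0)$), and that $\tilde N(0+)=N(0+)/\alpha$ exists. To pin down the possible values of $\tilde N(0+)$, I would split according to whether the truncation $r^{3+\delta}$ in \eqref{nNN} is asymptotically active. If $M(r)\le r^{3+\delta}$ for all sufficiently small $r$, then the $\max$ in \eqref{nNN} equals $r^{3+\delta}$ and a direct computation gives $\tilde N(r)=\tfrac{3+\delta}{2}\,e^{K'r^{(1-\delta)/2}}$, hence $\tilde N(0+)=(3+\delta)/2$. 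Otherwise, along some sequence $r_k\to 0$ one has $M(r_k)>r_k^{3+\delta}$, which forces $H(r_k)/r_k^{n-1}\gtrsim r_k^{3+\delta}$; the Almgren-type rescalings
\[
v_r(x)=\frac{v(rx)}{\sqrt{H(r)/r^{n-1}}}
\]
then satisfy $\int_{S_1}v_r^2\mu(r\,\cdot\,)\approx 1$ and solve a Signorini-type problem with coefficients $A(rx)\to I$ and a rescaled right-hand side $r^2f(rx)/\sqrt{H(r)/r^{n-1}}\to 0$. The uniform frequency bound yields uniform $W^{1,2}$-bounds, so extracting weak limits along $r_k\to 0$ produces nontrivial global solutions $v_0$ of the Signorini problem for the Laplacian, with $v_0(0)=0$ and homogeneous of degree $\kappa_0:=\tilde N(0+)$. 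The Athanasopoulos--Caffarelli--Salsa classification of nonnegative $\kappa$-homogeneous Signorini solutions forces $\kappa_0\in\{3/2\}\cup[2,\infty)$. Since $\delta<1$ implies $(3+\delta)/2<2$, this merges with the first regime into the dichotomy $\tilde N(0+)=3/2$ or $\tilde N(0+)\ge(3+\delta)/2$, and in particular $\tilde N(0+)\ge 3/2$.

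For the pointwise bounds, I would feed $\tilde N(r)\ge 3/2$ back into \eqref{nNN} to obtain the differential inequality
\[
\frac{d}{dr}\log\max\{M(r),r^{3+\delta}\}\ge \frac{3}{r}\,e^{-K'r^{(1-\delta)/2}},
\]
integrate from $r$ up to $r_0$, and use \eqref{per5} to conclude $M(r)\le Cr^{3}$ and hence $H(r)\le Cr^{n+2}$, with the constant depending on $H(r_0)$, $\|f\|_{L^\infty}$, and $\delta$. A standard Moser-type $L^2\to L^\infty$ iteration on dyadic balls (using that $v^2$ is an $L$-subsolution up to a term controlled by $\|f\|_{L^\infty}^2$) converts this $L^2$ decay into $|v(x)|\le C|x|^{3/2}$. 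The gradient bound $|\nabla v(x)|\le C|x|^{1/2}$ then follows by applying the optimal interior $C^{1,1/2}$-regularity of \cite{GS} to the rescaled function $\tilde v_r(x)=r^{-3/2}v(rx)$, which solves a Signorini problem of the same type with uniformly controlled data, and passing back to the original scale.

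The hard step is the blow-up: making rigorous that the rescaled limits $v_0$ actually solve the \emph{constant-coefficient} Signorini problem and inherit the frequency value $\tilde N(0+)$. Preserving the frequency identity under rescaling in the presence of a nontrivial conformal factor $\mu$ and a right-hand side $f$ is precisely the subtlety that motivated the introduction in \cite{GS} of the auxiliary functions $\psi$, $\sigma$ and the $r^{3+\delta}$-truncation in \eqref{nNN}, and it is what allows the classical frozen-coefficient classification to be transported back to the variable-coefficient setting.
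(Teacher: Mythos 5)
The paper itself does not prove this lemma: it is an assembly of Lemma 6.3, Lemma 6.6 and Theorem 6.7 of \cite{GS}, and the text explicitly defers to that reference. Your sketch reconstructs essentially the strategy used there --- the dichotomy governed by whether the truncation $r^{3+\delta}$ is active, Almgren-type rescalings whose blowups are classified by the constant-coefficient theory, and integration of $\tilde N(r)\ge 3/2$ to obtain $H(r)\le Cr^{n+2}$, followed by local $L^2\to L^\infty$ and gradient estimates --- so the approach is the right one. Two steps need tightening. First, in your second case you only assume $M(r_k)>r_k^{3+\delta}$ along a sequence, which is not enough to conclude that the blowup is homogeneous of degree $\tilde N(0+)$; the truncation must be inactive on a full interval $(0,\rho)$. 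The standard repair: if $M(s_j)\le s_j^{3+\delta}$ along \emph{any} sequence, then $\tilde N(s_j)=\frac{3+\delta}{2}e^{K's_j^{(1-\delta)/2}}\to \frac{3+\delta}{2}$ and there is nothing to prove; otherwise $M(r)>r^{3+\delta}$ for all small $r$ and the blowup argument applies (equivalently, $\tilde N(0+)<\frac{3+\delta}{2}$ forces, by integrating the frequency, $M(r)\ge c\,r^{2\kappa'}$ with $2\kappa'<3+\delta$, so the truncation is eventually inactive). One should also record that the blowup inherits $v_0(0)=|\nabla v_0(0)|=0$ and is nonnegative only on the thin space, which is what the classification $\kappa\in\{3/2\}\cup[2,\infty)$ actually requires. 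Second, you cannot invoke the \emph{optimal} $C^{1,1/2}$ regularity of \cite{GS} to get $|\nabla v(x)|\le C|x|^{1/2}$, since in \cite{GS} that theorem is deduced from precisely the growth estimates \eqref{vbounds}; what is needed, and what suffices when applied to $\tilde v_r=r^{-3/2}v(r\,\cdot\,)$ (bounded thanks to the already-established estimate $|v|\le C|x|^{3/2}$), is the a priori suboptimal interior $C^{1,\alpha}$ (or Lipschitz) estimate for variable-coefficient Signorini solutions. With these two repairs the argument is correct and coincides with the cited proof.
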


\begin{cor}\label{C:HI}
With $r_0$ as in Theorem~\ref{T:lym}, one has
\begin{equation}\label{mf4}
H(r)\leq C r^{n+2},\quad |I(r)|\le C r^{n+1},\quad r\le r_0.
\end{equation}
\end{cor}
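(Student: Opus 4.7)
The plan is to derive both estimates directly from the pointwise bounds on $v$ and $\nabla v$ given by \eqref{vbounds} in Lemma~\ref{L:newcrucial}, together with the ellipticity of $A$.

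For the bound on $H(r)$, I would use the definition \eqref{Hv}, namely $H(r) = \int_{S_r} v^2\mu$. On $S_r$ with $r \le r_0$ the estimate $|v(x)| \le C|x|^{3/2} = Cr^{3/2}$ holds, so $v^2 \le C^2 r^3$. The conformal factor $\mu(x) = \langle A(x)x,x\rangle/|x|^2$ is bounded above by $\lambda^{-1}$ thanks to the uniform ellipticity assumption \eqref{A0}. Since the surface measure of $S_r$ is $\omega_{n-1} r^{n-1}$, integrating yields $H(r) \le C r^{n+2}$ with a new universal constant.

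For $I(r)$, I would start from the first expression in \eqref{Iv}, $I(r) = \int_{S_r} v\langle A\nabla v,\nu\rangle$. By Cauchy--Schwarz and \eqref{A0}, $|\langle A\nabla v,\nu\rangle| \le \lambda^{-1}|\nabla v|$, and on $S_r$ we have $|v| \le Cr^{3/2}$ and $|\nabla v| \le Cr^{1/2}$. Multiplying these pointwise bounds and integrating over $S_r$ (area $\sim r^{n-1}$) gives $|I(r)| \le C r^{3/2}\cdot r^{1/2}\cdot r^{n-1} = C r^{n+1}$, as claimed.

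There is really no obstacle: the corollary is a straightforward consequence of \eqref{vbounds}, and no monotonicity or further analysis is needed. The only mild point to note is that \eqref{vbounds} already absorbs the dependence on $\delta$, $H(r_0)$ and $\|f\|_{L^\infty(B_1)}$ into the constant, so the same is true for the constant appearing in \eqref{mf4}.
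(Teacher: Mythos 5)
Your proof is correct and follows exactly the paper's argument: the paper's proof is the one-line observation that it suffices to insert the pointwise bounds \eqref{vbounds} into the definitions \eqref{Hv} and \eqref{Iv}, which is precisely what you carry out in detail. No issues.
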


\begin{proof}
It is enough to use \eqref{vbounds} in definitions \eqref{Hv} and
\eqref{Iv} above.
\end{proof}

The results of this section have been stated when the free boundary
point in question is 
the origin. However, given any $x_0\in\Gamma(v)$, we can move $x_0$ to
the origin by letting 
\begin{align*}
v_{x_0}(x)&=v(x_0+A^{1/2}(x_0)x)-b_{x_0}x_n,\quad \text{where}\ 
b_{x_0} =\langle A^{1/2}(x_0)\nabla v(x_0), e_n\rangle,
\\
A_{x_0}(x)&=A^{-1/2}(x_0)A(x_0+A^{1/2}(x_0)x)A^{-1/2}(x_0),
\\
\mu_{x_0}(x)&=\langle A_{x_0}(x)\nu(x),\nu(x)\rangle,
\\
L_{x_0}& = \operatorname{div}(A_{x_0} \nabla \cdot).
\end{align*}
(Note that, by the $C^{1,\frac 12}$-regularity of $v$
established in \cite{GS}, the mapping $x_0\mapsto b_{x_0}$ is
$C^{\frac 12}$ on $\Gamma(v)$.) 
Then, by construction we have the normalizations
$A_{x_0}(0)=I_n$, $\mu_{x_0}(0)=1$. We also know that
$0\in\Gamma(v_{x_0})$, and that
\[
v_{x_0}(0)= v(x_0) = 0,\quad |\nabla v_{x_0}(0)|=0.
\]
Besides,
$v_{x_0}$ satisfies \eqref{c11i}--\eqref{c44i} for the operator
$L_{x_0}$. Thus, all results stated above for $v$ are also applicable
to $v_{x_0}$.

We thus also have the versions of the quantities defined in this
sections, such as $M_L$, $N_L$, etc, centered at $x_0$ (if we
replace $L$ with $L_{x_0}$). But instead of using the overly bulky
notations $M_{L_{x_0}}$, $N_{L_{x_0}}$, etc, we will use $M_{x_0}$,
$N_{x_0}$, etc.

\section{Regular free boundary points}\label{S:reg}

Using Theorem~\ref{T:lym}, in this section we explore in more detail
the notion of \emph{regular free boundary points} and establish some
preliminary properties of the regular set. 
We begin by recalling the following definition from Section \ref{S:intro}.

\begin{dfn}\label{D:reg}
We say that $x_0\in \Gamma(v)$ is \emph{regular} iff $\tilde
N_{x_0}(v_{x_0},0+) = \frac 32$
and let $\Gamma_{3/2}(v)$ be the set of all regular free
boundary points. $\Gamma_{3/2}(v)$  is also called the \emph{regular set}.
\end{dfn}

In Lemma~\ref{uniformN(r)3/2}  below we prove that $\Gamma_{3/2}(v)$
is a relatively open subset of the free boundary $\Gamma(v)$.
To accomplish this we prove that $\tilde{N}_{\bar{x}}(0+)=3/2$ for
$\bar{x}$ in a small neighborhood of $x_0\in \Gamma_{3/2}(v)$. Since
the definition of $\tilde{N}(r)$ involves a truncation of
$M(r)$, we first need to establish the following auxiliary result.

\begin{lemma}\label{L:boundonHr} Let $v$ satisfy
\eqref{c11i}--\eqref{c44i} with $0\in\Gamma_{3/2}(v)$. Then
$$
\frac{r}2\frac{M'(r)}{M(r)}\to \frac{3}{2}\quad\text{as }r\to 0+.
$$
In particular, for every $\ve>0$ there exists
$r_\ve>0$ and $C_\ve>0$ such that 
\begin{align*}
\frac{r}2\,\frac{M'(r)}{M(r)}\leq \frac{3+\ve}2,\quad M(r)\geq C_\ve
  r^{3+\ve},\quad\text{for }0<r\leq r_\ve.
\end{align*}
\end{lemma}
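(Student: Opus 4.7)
The plan is to exploit the strict gap $3/2 < (3+\delta)/2$ between $\tilde N(0+)$ at a regular point and the value $\tilde N$ is forced to take in the ``truncation regime'' $M(r) < r^{3+\delta}$. This gap lets me eliminate the truncation from the maximum in the definition of $\tilde N$ for all sufficiently small $r$, after which the desired limit is just a rearrangement of the formula for $\tilde N$.

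Since $\tilde N(0+) = 3/2$, choose $r^* \in (0,r_0)$ such that $\tilde N(r) \le 3/2 + \delta/4$ for a.e.\ $r \in (0, r^*)$. Consider the open set $U := \{r \in (0,r^*) : M(r) < r^{3+\delta}\}$, which is open by continuity of $M$ (recall $M = H/\psi$ is absolutely continuous). If $U$ were nonempty, then on each connected component of $U$ one has $\max\{M(r),r^{3+\delta}\} = r^{3+\delta}$, so $\tfrac{d}{dr}\log\max = (3+\delta)/r$ a.e., hence
$$
\tilde N(r) \,=\, \frac{r}{2}\,e^{K' r^{(1-\delta)/2}}\cdot\frac{3+\delta}{r} \,=\, \frac{3+\delta}{2}\,e^{K' r^{(1-\delta)/2}} \,\ge\, \frac{3+\delta}{2},
$$
contradicting $\tilde N(r) \le 3/2 + \delta/4 < (3+\delta)/2$. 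Thus $U = \emptyset$, so $M(r) \ge r^{3+\delta}$ on all of $(0, r^*)$, and consequently $\max\{M(r),r^{3+\delta}\} = M(r)$ there. This ``removal of the truncation'' is the main obstacle in the proof; it rests on the very design of $N$ (which forces a constant value in the truncation regime) together with the strict gap $(3+\delta)/2 > 3/2 = \tilde N(0+)$ intrinsic to regularity.

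On $(0,r^*)$ the formula for $\tilde N$ then reduces, a.e., to
$$
\tilde N(r) \,=\, \frac{r}{2}\,e^{K' r^{(1-\delta)/2}}\,\frac{M'(r)}{M(r)}.
$$
Passing to the limit $r \to 0+$ and invoking $\tilde N(r) \to 3/2$ together with $e^{K' r^{(1-\delta)/2}} \to 1$ yields the main assertion $\tfrac{r}{2}\,M'(r)/M(r) \to 3/2$. For the two quantitative statements, given $\ve > 0$ this limit supplies $r_\ve \in (0,r^*)$ for which $\tfrac{r}{2}\,M'(r)/M(r) \le (3+\ve)/2$ on $(0, r_\ve]$; integrating the resulting inequality $(\log M)'(r) \le (3+\ve)/r$ from $r$ to $r_\ve$ then gives $M(r) \ge M(r_\ve)(r/r_\ve)^{3+\ve} =: C_\ve r^{3+\ve}$, completing the argument.
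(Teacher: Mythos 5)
Your proposal is correct and follows essentially the same route as the paper: both arguments use the gap between $3/2$ and $(3+\delta)/2$ to rule out the truncation regime $M(r)<r^{3+\delta}$ for small $r$ (the paper via a contradiction along a sequence $s_j\to 0$, you via the open set $U$ — a cosmetic difference), then read off the limit from the untruncated formula for $\tilde N(r)$ and integrate the logarithmic derivative to get the lower bound on $M(r)$.
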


\begin{proof}
We first claim that since $0\in\Gamma_{3/2}(v)$, then $M(r)\ge
r^{3+\delta}$ for $r>0$ small. Indeed, if there was a sequence
$s_j\rightarrow 0$ such that $M(s_j)< s_j^{3+\delta}$, then
\[
\tilde{N}(s_j)=\frac{3+\delta}{2}e^{K's_j^{\frac{1-\delta}{2}}}\rightarrow
\frac{3+\delta}{2}\ne \frac{3}{2}=\tilde{N}(0+),
\]
which is a contradiction.
Hence, for $r$ small we have 
\[
\tilde{N}(r)=\frac{r}{2}e^{K'r^{\frac{1-\delta}{2}}}\frac{M'(r)}{M(r)}.
\]
Since $e^{K'r^{\frac{1-\delta}{2}}}\to 1$ as $r\to 0$ and
$\tilde{N}(0+)=3/2$, we obtain the first part of the lemma. Hence,
for every $\ve>0$ there exists a small $r_\ve>0$ such that
\[
r\frac{M'(r)}{M(r)}\leq 3+\ve,\quad r<r_\ve.
\]
Integrating from $r$ to $r_\ve$, this gives
\[
\frac{M(r_\ve)}{M(r)}\leq \left(\frac{r_\ve}{r}\right)^{3+\ve},
\]
from which we conclude, with $C_\ve = M(r_\ve)/r_\ve^{3+\ve}$, that
$M(r)\geq C_{\ve}r^{3+\ve}$.
\end{proof}

\begin{lemma}\label{uniformN(r)3/2} Let $v$ satisfy
  \eqref{c11i}--\eqref{c44i} with $x_0\in\Gamma_{3/2}(v)$. Then, there
  exists $\eta_0=\eta_0(x_0)>0$ such that  $\Gamma(v)\cap
  B_{\eta_0}'(x_0)\subset \Gamma_{3/2}(v)$ and, moreover, the 
  convergence 
$$
\tilde N_{\bar x}(r)\to 3/2\quad\text{as}\quad r\to 0+
$$
is uniform for $\bar x\in\Gamma(v)\cap\overline {B_{\eta_0/2}'(x_0)}$.
\end{lemma}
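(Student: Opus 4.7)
The plan is to combine three ingredients: the monotonicity of $N_{\bar x}(r)$ from Theorem~\ref{T:lym}, the dichotomy $\tilde N_{\bar x}(0+)\in\{3/2\}\cup[(3+\delta)/2,\infty)$ from Lemma~\ref{L:newcrucial}, and the continuous dependence in $\bar x$ of $M_{\bar x}(r)$ and $M_{\bar x}'(r)$ at any fixed positive radius. The overall strategy is to pick a small scale $r_1$ at which $\tilde N_{x_0}(r_1)$ is already strictly below $(3+\delta)/2$, propagate this inequality to nearby $\bar x$ by continuity, and close up using the gap.

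First, since $x_0\in\Gamma_{3/2}(v)$, Lemma~\ref{L:boundonHr} applied to $v_{x_0}$ gives $M_{x_0}(r)>r^{3+\delta}$ for all small $r$, so that the truncation in the definition of $\tilde N_{x_0}$ is inactive, and $\tilde N_{x_0}(r)\to 3/2$ as $r\to 0+$. Hence, given $\eta>0$ small, one can pick $r_1=r_1(\eta,x_0)>0$ with $M_{x_0}(r_1)>2 r_1^{3+\delta}$ and $\tilde N_{x_0}(r_1)<3/2+\eta$. By the $C^{1,1/2}$ regularity of $v$ from \cite{GS} and the Lipschitz dependence of $A$, the quantities $H_{\bar x}(r_1)$, $I_{\bar x}(r_1)$, $\psi_{\bar x}(r_1)$, $\sigma_{\bar x}(r_1)$, $M_{\bar x}(r_1)$ and $M_{\bar x}'(r_1)$ depend continuously on $\bar x\in\Gamma(v)$ at the fixed positive scale $r_1$. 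Consequently, there exists $\eta_0>0$ such that for every $\bar x\in\Gamma(v)\cap\overline{B_{\eta_0}'(x_0)}$ one still has $M_{\bar x}(r_1)>r_1^{3+\delta}$ and $\tilde N_{\bar x}(r_1)<3/2+2\eta$.

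Next, the monotonicity of $N_{\bar x}(r)$ yields $N_{\bar x}(0+)\le N_{\bar x}(r_1)$, which combined with Lemma~\ref{L:nice} gives
$$\tilde N_{\bar x}(0+)=N_{\bar x}(0+)/\alpha_{\bar x}\le \tilde N_{\bar x}(r_1)\cdot \sigma_{\bar x}(r_1)/(r_1\alpha_{\bar x})=\tilde N_{\bar x}(r_1)(1+O(r_1)),$$
with the $O(r_1)$ uniform in $\bar x$ (recall $e^{-\beta}\le \alpha_{\bar x}\le e^{\beta}$). For $\eta$ and $r_1$ small enough, the right-hand side is strictly less than $(3+\delta)/2$, so Lemma~\ref{L:newcrucial} forces $\tilde N_{\bar x}(0+)=3/2$, proving $\bar x\in\Gamma_{3/2}(v)$. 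For the uniform convergence on $\overline{B_{\eta_0/2}'(x_0)}$, the same estimate applied with any $r\le r_1$ in place of $r_1$ gives the upper bound $\tilde N_{\bar x}(r)\le 3/2+3\eta$. A matching lower bound $\tilde N_{\bar x}(r)\ge 3/2-O(r_1)$ follows from $\tilde N_{\bar x}(0+)=3/2$ together with $N_{\bar x}(r)\ge N_{\bar x}(0+)$, again via Lemma~\ref{L:nice}.

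The main technical obstacle will be rigorously justifying the continuous dependence of $M_{\bar x}(r_1)$ and $M_{\bar x}'(r_1)$ on $\bar x$ at the fixed positive scale $r_1$: one must track the $\bar x$-dependence of the normalized solution $v_{\bar x}$, the matrix $A_{\bar x}$, the conformal factor $\mu_{\bar x}$, and the auxiliary functions $\psi_{\bar x}$ and $\sigma_{\bar x}$. However, this should follow in a standard fashion from the optimal $C^{1,1/2}$ regularity of $v$ established in \cite{GS}, the Lipschitz regularity of the coefficients, and the explicit formulas in \eqref{psi-sig-def} and \eqref{Mii}.
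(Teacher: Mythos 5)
Your strategy for the first assertion (relative openness of $\Gamma_{3/2}(v)$) is essentially the paper's, but it leans on a continuity claim that the paper deliberately avoids. You assert that $\psi_{\bar x}(r_1)$, $\sigma_{\bar x}(r_1)$, and hence $M_{\bar x}(r_1)$ depend continuously on $\bar x$ at the fixed scale $r_1$. Since $\psi_{\bar x}(r_1)=\exp\bigl(\int_0^{r_1}G_{\bar x}(s)\,ds\bigr)$ with $G_{\bar x}$ defined piecewise as in \eqref{psi-sig-def} and the integral extending down to $s=0$, this is not a routine consequence of the $C^{1,1/2}$ regularity of $v$; indeed the paper explicitly remarks that ``the continuity of the mapping $\bar x\to M_{\bar x}(r)$ is not so clear'' and works around it by (i) using only the ratio $M'_{\bar x}(r)/M_{\bar x}(r)=2I_{\bar x}(r)/H_{\bar x}(r)$, in which the $\psi_{\bar x}$ factors cancel and which has a manifestly continuous integral representation, and (ii) replacing continuity of $\sigma_{\bar x}(r)/r$ by the universal two-sided bound $c_0\le\sigma_{\bar x}(r)/r\le c_0^{-1}$, combined with the stronger lower bound $M_{x_0}(r)\ge C_\ve r^{3+\ve}$, $\ve<\delta$, so that the loss of the constant $c_0^2$ is absorbed by shrinking $r_1$. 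Your factor-of-two margin $M_{x_0}(r_1)>2r_1^{3+\delta}$ would not survive that cruder transfer, so as written the step ``$M_{\bar x}(r_1)>r_1^{3+\delta}$ for nearby $\bar x$'' rests on the unproven continuity of $\psi_{\bar x}(r_1)$.

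The more serious gap is in the uniform convergence. Your parameters $\eta$, $r_1$, and hence $\eta_0$ are fixed once and for all, and your estimates give only $\tfrac32-O(r)\le\tilde N_{\bar x}(r)\le(1+O(r_1))\bigl(\tfrac32+2\eta\bigr)$ for $r\le r_1$ and $\bar x\in\Gamma(v)\cap\overline{B'_{\eta_0/2}(x_0)}$. The upper bound does not improve as $r\to0$: it yields $\limsup_{r\to0}\sup_{\bar x}\tilde N_{\bar x}(r)\le\tfrac32+C(\eta+r_1)$, which is not uniform convergence to $\tfrac32$ on the fixed neighborhood; decreasing $\eta$ shrinks $\eta_0$, so it does not help for points near $\D B'_{\eta_0/2}(x_0)$. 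To close this you need either a covering argument (every point of the compact set $\Gamma(v)\cap\overline{B'_{\eta_0/2}(x_0)}$ is regular by the first part, so rerun the argument around each point with target accuracy $\varepsilon'$ and extract a finite subcover), or the paper's route: observe that $r\mapsto e^{\beta r}\tilde N_{\bar x}(r)$ is monotone nondecreasing, that $\bar x\mapsto e^{\beta r}\tilde N_{\bar x}(r)$ is continuous for each fixed $r$ (again via the $\psi$-free ratio formula), and that the pointwise limit is the constant $3/2$, and then invoke Dini's theorem on the compact set.
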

\begin{proof} Fix $0<\varepsilon<\delta/8$, where $\delta>0$ is fixed
  as in the definition \eqref{nNN} of the frequency
  $N_{x_0}(r)$. Then, by Lemma~\ref{L:boundonHr}, there exist
  $C_\ve = C_\ve(x_0)>0$ and $r_\varepsilon=r_\varepsilon(x_0)>0$ such that
$$
\frac{r}{2}\frac{M'_{x_0}(r)}{M_{x_0}(r)}\leq
\frac{3+\varepsilon}{2},\quad M_{x_0}(r)\geq C_\varepsilon r^{3+\varepsilon},\quad r<r_\varepsilon.
$$
We then want to show that similar inequalities will hold if we replace
$x_0$ with $\bar x\in
B'_{\eta_\varepsilon}(x_0)$ for a sufficiently small
$\eta_{\varepsilon}$. We will write $L_{\bar x}=\text{div}(A_{\bar
  x}\nabla \cdot)$. To track the dependence on $\bar x$,
we write, using the differentiation formulas in \cite{GS}, that
\begin{align*}
\frac{M'_{\bar x}(r)}{M_{\bar x}(r)}&=\frac{H'_{\bar x}(r)}{H_{\bar x}(r)}-\frac{\psi'_{\bar x}(r)}{\psi_{\bar x}(r)}=\frac{H'_{\bar x}(r)}{H_{\bar x}(r)}-\frac{\int_{S_r}
  v_{\bar x}(x)^2 L_{\bar x}|x|}{\int_{S_r}v_{\bar x}(x)^2\mu_{\bar x}(x)}\\
&=\frac{2 I_{\bar x}(r)}{H_{\bar x}(r)}=\frac{2\int_{S_r} v_{\bar x}(x)\langle
  A_{\bar x}(x)\nabla v_{\bar x}(x),\nu\rangle}{\int_{S_r}
  v_{\bar x}(x)^2\mu_{\bar x}(x)}\\
&=\frac{2r\int_{S_r} v_{\bar x}(x)\langle
  A_{\bar x}(x)\nabla v_{\bar x}(x),x\rangle}{\int_{S_r}
  v_{\bar x}(x)^2\langle A_{\bar x}(x)x,x\rangle}.
\end{align*}
This implies that for fixed $r>0$, the mapping $\bar x\mapsto
M_{\bar x}'(r)/M_{\bar x}(r)$ is continuous. Thus, if
$\rho_\varepsilon<r_\varepsilon$, we
can find a small $\eta_\varepsilon>0$ such that
$$
\frac{\rho_\varepsilon}{2}
\frac{M_{\bar x}'(\rho_\varepsilon)}{M_{\bar x}(\rho_\varepsilon)}\leq
\frac{3+2\varepsilon}{2},\quad\text{for }\bar x\in B'_{\eta_\varepsilon}(x_0).
$$ 
On the other hand, since
$$
M_{\bar x}(r)=\frac{r}{\sigma_{\bar x}(r)} \frac{1}{r^{n+1}}\int_{S_r}
v_{\bar x}(x)^2 \langle A_{\bar x}(x) x, x\rangle,
$$
the continuity of the mapping $\bar x \to M_{\bar x}(r)$
is not so clear. However, having that $c_0\leq \frac{\sigma_{\bar
  x}(r)}{r} \leq c_0^{-1}$ for a universal constant $c_0>0$, we can write that
$$
M_{\bar x}(\rho_\varepsilon)\geq
\frac12c_0^2C_\varepsilon\rho_{\varepsilon}^{3+\varepsilon}>
\rho_\varepsilon^{3+\delta},\quad \bar x\in B'_{\eta_\varepsilon}(x_0),
$$
if we take $\eta_\varepsilon$ and $\rho_\varepsilon$ sufficiently
small. The latter inequality
implies that we can explicitly compute $\tilde N_{\bar
  x}(\rho_\varepsilon)$ by
$$
\tilde N_{\bar
  x}(\rho_\varepsilon)=\frac{\rho_\varepsilon}{2}
e^{K'\rho_\varepsilon^{(1-\delta)/2}}\frac{M_{\bar
    x}'(\rho_\varepsilon)}{M_{\bar x}(\rho_\varepsilon)}\le
e^{K'\rho_\varepsilon^{(1-\delta)/2}}\frac{3+2\varepsilon}{2} \leq
\frac{3+3\varepsilon}{2},
$$
again if $\rho_\varepsilon$ is small enough. Hence, by the monotonicity
of $N_{\bar x}(r)=(\sigma_{\bar x}(r)/r)\tilde N_{\bar x}(r)$, we obtain 
$$
\tilde N_{\bar x} (0+)\leq \frac{1}{\alpha_{\bar x}}
\frac{\sigma_{\bar x}(\rho_\varepsilon)}{\rho_\varepsilon}
\frac{3+3\varepsilon}{2},
$$
where 
\begin{equation}\label{alphaxbar}
\alpha_{\bar x} \defeq  \lim_{r\to 0+}\frac{\sigma_{\bar x}(r)}r.
\end{equation}
Using now the estimates in Lemma~\ref{L:nice}, we have
$$
\left|\frac{\sigma_{\bar x}(r)}{r}-\alpha_{\bar x}\right|\leq C_0
r\quad \text{and} \quad \alpha_{\bar x} \ge c_0,
$$
therefore we can guarantee that
$$
\tilde N_{\bar x}(0+)\leq (1+\varepsilon)\frac{3+3\varepsilon}2\leq
\frac{3+7\varepsilon}{2}<\frac{3+\delta}2,\quad \bar x\in
B'_{\eta_\varepsilon}(x_0).
$$
But then, by the gap of values of $\tilde N_{\bar x}(0+)$ between $3/2$ and
$(3+\delta)/2$, we conclude that
\[
\tilde N_{\bar x}(0+)=3/2,\quad \bar x\in B'_{\eta_\varepsilon}(x_0).
\]
To prove the second part of the lemma, we note that for any fixed
$\bar x\in B_{\eta_\varepsilon}'(x_0)$, the mapping
$$
r\mapsto e^{\beta r} \tilde N_{\bar x}(r)\quad 0<r<\rho_\varepsilon
$$
is monotone increasing for a universal constant $\beta>0$, which
follows from the inequality 
$$
\left|\frac{d}{dr}\log\frac{\sigma_{\bar x}(r)}{r}\right|\leq
\beta,\qquad \text{(see Lemma~\ref{L:nice})},
$$
the monotonicity of $r\mapsto N_{\bar x}(r)=\frac{\sigma_{\bar
    x}(r)}{r}\tilde N_{\bar x}(r)$, as well as the nonnegativity of
$\tilde N_{\bar x}(r)$.

Now, for each fixed $0<r<\rho_\varepsilon$, the mapping
$$
\bar x\mapsto e^{\beta r} \tilde{N}_{\bar x}(r)=e^{\beta r+ K'
  r^{(1-\delta)/2}}\frac{r^2\int_{S_r} v_{\bar x}(x)\langle
  A_{\bar x}(x)\nabla v_{\bar x}(x),x\rangle}{\int_{S_r}
  v_{\bar x}(x)^2\langle A_{\bar x}(x)x,x\rangle}
$$
is continuous on $B'_{\eta_\varepsilon}(x_0)$. Since the limit 
$$
\lim_{r\to 0+}e^{\beta r} \tilde N_{\bar
  x}(r)=\tilde N_{\bar x}(0+)=3/2,\quad\text{for all } \bar x\in
B'_{\eta_\varepsilon}(x_0),
$$
by the classical theorem of Dini, we have that the convergence
$e^{\beta r}\tilde N_{\bar x}(r)\to 3/2$ as $r\to 0+$ will be uniform on
$\Gamma(v)\cap\overline{B'_{\eta_\varepsilon/2}(x_0)}$, implying also
the uniform
convergence $\tilde N_{\bar x}(r)\to 3/2$.
\end{proof}

In the remaining part of this section we study  \emph{Almgren type scalings}
\begin{equation}\label{Almgrentype}
\tilde v_{\bar x, r}(x)=\frac{v_{\bar x}(rx)}{d_{\bar x,r}},\quad
d_{\bar x, r}= \left(\frac{1}{r^{n-1}}H_{\bar x}(r)\right)^{1/2}.
\end{equation}
This is slightly different from what was done in \cite{GS}, but more
suited for the study of the free boundary.
Notice that we have the following normalization:
$$
\int_{S_1} \tilde v_{\bar x,r}^2\mu_{\bar x, r}=1,\quad \mu_{\bar x,
  r}=\frac{\langle A_{\bar x} (rx) x,x\rangle}{|x|^2}.
$$
Now, if $\bar x\in \Gamma_{3/2}(v)$, then the results of \cite{GS}
imply that over subsequences $r=r_j\to 0$, we have the convergence 
\[
\tilde{v}_{\bar{x},r}(x)\left(\frac{\sigma_{\bar{x}}(r)}{r}\right)^{1/2}
\rightarrow a\Re(\langle x',e'\rangle +i|x_n|)^{3/2}\quad
\text{ in }C^{1,\alpha}_{\text{loc}}(\R^n_\pm\cup \R^{n-1})
\]
for some $e'\in\R^{n-1}$ with $|e'|=1$ and $a>0$. Since we also have
the convergence
\[
\frac{\sigma_{\bar{x}}(r)}{r}\rightarrow \alpha_{\bar{x}}>0,
\]
see Lemma~\ref{L:nice}, we obtain that
\[
\tilde{v}_{\bar{x},r}(x)\rightarrow
\frac{a}{\alpha_{\bar{x}}^{1/2}}\Re(\langle x',e'\rangle
+i|x_n|)^{3/2}\quad\text{ in }C^{1,\alpha}_{\text{loc}}(\R^n_\pm\cup \R^{n-1}).
\]
The normalization
$\int_{S_1}\tilde{v}_{\bar{x},r}^2\mu_{\bar{x},r}=1$ then implies
that there exists a dimensional constant $c_n>0$ (independent of $\bar
x$), such that, on a subsequence,
$$
\tilde v_{\bar x, r}(x)\to c_n \Re (\langle x', e'\rangle +i |x_n|)^{3/2}.
$$
Moreover, we can actually prove the following result.

\begin{lemma}\label{vxr-h} Let $v$ satisfy \eqref{c11i}--\eqref{c44i} with
  $x_0\in\Gamma_{3/2}(v)$. Given $\theta>0$, there exists $r_0=r_0(x_0)>0$ and
  $\eta_0=\eta_0(x_0)>0$ such that
$$
\inf_{e'\in\R^{n-1}, |e'|=1}\|\tilde v_{\bar x,r}(x)-c_n\Re(\langle
x', e'\rangle +i|x_n|)^{3/2}\|_{C^{1,\alpha}(B_1^{\pm}\cup B_1')}<\theta
$$
for any $\bar x\in=\Gamma_{3/2}(v)\cap\overline{B_{\eta_0}'(x_0)}$ and $r<r_0$.
\end{lemma}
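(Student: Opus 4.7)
The plan is to argue by compactness and contradiction. Suppose the conclusion fails. Fix $\eta_0$ as provided by Lemma~\ref{uniformN(r)3/2} and extract sequences $\bar x_j \in \Gamma_{3/2}(v) \cap \overline{B'_{\eta_0/2}(x_0)}$ and $r_j \to 0^+$ such that, for some $\theta_0 > 0$,
$$\inf_{|e'|=1}\|\tilde v_{\bar x_j, r_j} - c_n\Re(\langle x', e'\rangle + i|x_n|)^{3/2}\|_{C^{1,\alpha}(B_1^\pm \cup B_1')} \geq \theta_0, \quad \forall j.$$
The goal is to extract a subsequence along which $\tilde v_{\bar x_j, r_j}$ converges in $C^{1,\alpha}(B_1^\pm \cup B_1')$ to $c_n\Re(\langle x', e'_\infty\rangle + i|x_n|)^{3/2}$ for some unit vector $e'_\infty$, contradicting the separation above.

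\emph{Uniform estimates.} By Lemma~\ref{uniformN(r)3/2}, $\tilde N_{\bar x_j}(r) \to 3/2$ uniformly in $j$. Inspecting the proof of Lemma~\ref{L:boundonHr} with this uniformity, one obtains, for any fixed $\varepsilon \in (0,\delta)$, constants $c_\varepsilon, r_\varepsilon > 0$ independent of $j$ with $M_{\bar x_j}(r) \geq c_\varepsilon r^{3+\varepsilon}$ for $0 < r < r_\varepsilon$. By Lemma~\ref{L:psi1} this yields $d_{\bar x_j, r_j} \geq c\,r_j^{3/2+\varepsilon/2}$ for large $j$. Consequently $\tilde v_{\bar x_j, r_j}$ solves a Signorini problem for the operator $\operatorname{div}(A_{\bar x_j}(r_j\,\cdot)\nabla\,\cdot)$ (whose coefficients tend to $I_n$ uniformly on compacts, with Lipschitz constant $r_j Q \to 0$) and whose right-hand side has $L^\infty$-norm bounded by $C r_j^{1/2-\varepsilon/2} \to 0$. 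Combined with the normalization $\int_{S_1} \tilde v_{\bar x_j, r_j}^2 \mu_{\bar x_j, r_j} = 1$ and with the rescaled version of \eqref{vbounds}, one obtains uniform $C^{1,1/2}$ bounds for $\tilde v_{\bar x_j, r_j}$ on compact subsets of $\R^n_\pm \cup \R^{n-1}$, hence precompactness in $C^{1,\alpha}_{\mathrm{loc}}$ for every $\alpha < 1/2$.

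\emph{Identification of the limit.} Along a subsequence, $\tilde v_{\bar x_j, r_j} \to v_0$ in $C^{1,\alpha}_{\mathrm{loc}}(\R^n_\pm \cup \R^{n-1})$, where $v_0$ solves the standard (constant-coefficient) Signorini problem in $\R^n$, is nonnegative on $\R^{n-1}$, satisfies $v_0(0) = |\nabla v_0(0)| = 0$, and is normalized by $\int_{S_1} v_0^2 = 1$ (since $\mu_{\bar x_j, r_j} \to 1$ uniformly on $S_1$). I expect the main obstacle to be showing that $v_0$ is homogeneous of degree $3/2$. Fix $\rho > 0$. Since $M_{\bar x_j}(\rho r_j) \geq c_\varepsilon(\rho r_j)^{3+\varepsilon} > (\rho r_j)^{3+\delta}$ for $j$ large, the truncation in the definition of $\tilde N$ is inactive, so
$$\tilde N_{\bar x_j}(\rho r_j) = \frac{\rho r_j}{2}\,e^{K'(\rho r_j)^{(1-\delta)/2}}\,\frac{M'_{\bar x_j}(\rho r_j)}{M_{\bar x_j}(\rho r_j)}.$$
A direct change of variables, together with $\mu_{\bar x_j, r_j} \to 1$ and the $C^{1,\alpha}$-convergence $\tilde v_{\bar x_j, r_j} \to v_0$, shows that the right-hand side converges to $\rho \int_{S_\rho} v_0\,\partial_\nu v_0 / \int_{S_\rho} v_0^2 = N_\Delta(v_0, \rho)$, the classical Almgren frequency of $v_0$. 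On the other hand, Lemma~\ref{uniformN(r)3/2} gives $\tilde N_{\bar x_j}(\rho r_j) \to 3/2$. Hence $N_\Delta(v_0, \rho) \equiv 3/2$ on $(0,\infty)$, and the standard rigidity in Almgren's monotonicity formula forces $v_0$ to be $3/2$-homogeneous.

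\emph{Classification and conclusion.} By the classification of $3/2$-homogeneous global solutions of the Signorini problem that are nonnegative on $\R^{n-1}$ (see \cite{ACS} or \cite{PSU}), $v_0 = a\,\Re(\langle x', e'_\infty\rangle + i|x_n|)^{3/2}$ for some $a \geq 0$ and $|e'_\infty| = 1$; the normalization $\int_{S_1} v_0^2 = 1$ pins down $a$ to a dimensional constant, which by definition is $c_n$. This contradicts the $\theta_0$-separation assumption for $j$ large, completing the proof.
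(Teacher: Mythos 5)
Your proposal follows essentially the same route as the paper: contradiction, uniform $C^{1,1/2}$ compactness of the Almgren scalings, passage to the limit in the rescaled frequency identity to force $N_\Delta(v_0,\rho)\equiv 3/2$, and then the classification of nonnegative $3/2$-homogeneous global solutions. The identification and classification steps match the paper's argument almost verbatim, and your explicit treatment of the rescaled right-hand side (using the lower bound $d_{\bar x_j,r_j}\ge c\,r_j^{3/2+\varepsilon/2}$ to get $\|f\|$-smallness of order $r_j^{1/2-\varepsilon/2}$) is a point the paper leaves implicit, so that part is fine.

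One justification in your compactness step does not work as stated: you claim the uniform $C^{1,1/2}$ bounds follow from the normalization on $S_1$ together with ``the rescaled version of \eqref{vbounds}.'' The constant in \eqref{vbounds} for $v_{\bar x}$ depends on $H_{\bar x}(r_0)$, and after dividing by $d_{\bar x_j,r_j}\ge c\,r_j^{3/2+\varepsilon/2}$ the rescaled bound reads $|\tilde v_{\bar x_j,r_j}(x)|\le C|x|^{3/2}r_j^{-\varepsilon/2}$, which degenerates as $r_j\to 0$; and the normalization alone controls $\tilde v$ only on $S_1$, not on $B_R$ for $R>1$. What is actually needed — and what the paper uses — is the uniform \emph{doubling} estimate obtained by integrating the frequency upper bound $\tfrac{t}{2}M'_{\bar x}(t)/M_{\bar x}(t)\le \tfrac{3+\varepsilon}{2}$ from Lemma~\ref{uniformN(r)3/2} between $r$ and $Rr$, giving $H_{\bar x}(Rr)\le C_0H_{\bar x}(r)R^{n+2+\varepsilon}$ and hence $\int_{S_R}\tilde v_{\bar x,r}^2\mu_{\bar x,r}\le C_0R^{n+2+\varepsilon}$; the uniform $L^2(B_R)$ bound then upgrades to $C^{1,1/2}(B_{R/2}^\pm\cup B'_{R/2})$ by the regularity theory of \cite{GS}. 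Since you already have the uniform frequency bound in hand, this is an immediate repair, but as written the step is not justified.
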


\begin{proof} Suppose the contrary. Then, there exists a sequence
  $\bar x_j\to x_0$ and $r_j\to 0$ such that
$$
\|\tilde v_{\bar x_j, r_j}-c_n\Re(\langle x', e'\rangle
+i|x_n|)^{3/2}\|_{C^{1,\alpha}(B_1^{\pm}\cup B_1')}\geq \theta
$$
for any unit vector $e'\in\R^{n-1}$. Observe now that
\begin{equation}\label{rescaleN(r)}
\begin{aligned}
e^{-K'(\rho r)^{(1-\delta)/2}}\tilde N_{\bar x}(\rho r)&=\frac{(\rho
  r)^2\int_{S_{\rho r}} v_{\bar x}(x)\langle
  A_{\bar x}(x)\nabla v_{\bar x}(x),x\rangle}{\int_{S_{\rho r}}
  v_{\bar x}(x)^2\langle A_{\bar x}(x)x,x\rangle}\\
&=\frac{\rho^2\int_{S_\rho} \tilde v_{\bar x,r}(x)\langle
  A_{\bar x}(rx)\nabla \tilde v_{\bar x,r}(x),x\rangle}{\int_{S_\rho}
  \tilde v_{\bar x, r}(x)^2\langle A_{\bar x}(rx)x,x\rangle}
\end{aligned}
\end{equation}
Now, we claim that the scalings $\tilde v_{\bar x_j, r_j}$ are uniformly
bounded in $C^{1,1/2}(B_R^\pm\cup B_R')$ for any $R>0$.
Indeed, by Lemma~\ref{uniformN(r)3/2}, given $\varepsilon>0$ small, we have that
$$
\frac{t}{2}\frac{M'_{\bar x}(t)}{M_{\bar x}(t)}\leq
\frac{3+\varepsilon}{2},\quad t<\rho_\varepsilon,\ \bar x \in
B_{\eta_\varepsilon}'(x_0).
$$
Let $R\geq 1$ and $Rr<\rho_\varepsilon$. Integrating the above
inequality from $t=r$ to $Rr$, we obtain that
$$
M_{\bar x}(Rr)\leq M_{\bar x}(r) R^{3+\varepsilon}.
$$
Changing $M_{\bar x}$ to $H_{\bar x}$ we therefore have, using
\eqref{per5}, that
$$
H_{\bar x}(Rr)\leq C_0 H_{\bar x}(r) R^{n+2+\varepsilon}.
$$
The latter can we written in the form
$$
\int_{S_R} \tilde v_{\bar x, r}^2\mu_{\bar x, r}\leq C_0 R^{n+2+\varepsilon}.
$$
Thus, the uniform boundedness of $\tilde v_{\bar x_j, r_j}$ in
$L^2(B_R)$, and consequently in $C^{1,1/2}(B_{R/2}^\pm\cup B_{R/2}')$,
follows. 
Hence, we can assume without loss of generality that $\tilde v_{\bar x_j,
  r_j}\to v_0$ in $C^{1,\alpha}_\loc(\R^n_\pm\cup\R^{n-1})$. It is
immediate to see that $v_0$ will solve the Signorini problem for the
Laplacian in the entire $\R^n$. Besides, by
Lemma~\ref{uniformN(r)3/2}, we will have that $\tilde N_{\bar
  x_j}(\rho r_j)\to 3/2$. On the other hand, passing to the limit in
\eqref{rescaleN(r)}, we obtain that
$$
\frac32=\frac{\int_{S_\rho} v_{0}(x)\langle
  \nabla v_{0}(x),x\rangle}{\int_{S_\rho}
  v_{0}(x)^2}=\frac{\rho\int_{B_\rho} |\nabla v_0|^2}{\int_{S_\rho}
  v_{0}(x)^2},\quad\text{for any }\rho>0.
$$ 
Therefore, $v_0$ is $3/2$ homogeneous global solution of the Signorini
problem and thus has the form $v_0(x)=c_n\Re(\langle x',
e_0'\rangle +i|x_n|)^{3/2}$. Hence, for large $j$ we will have
$$
\|\tilde v_{\bar x_j, r_j}-c_n\Re(\langle x',
e_0'\rangle +i|x_n|)^{3/2}\|_{C^1(B_1^\pm\cup B_1')}< \theta,
$$
contradictory to our assumption. The proof is complete.
\end{proof}

\section{A Weiss type monotonicity formula}\label{S:weiss}

In this section we establish a monotonicity formula which is
reminiscent of that established by Weiss in \cite{W} for the classical
obstacle problem, and which is one of the two main ingredients in our
proof of the $C^{1,\beta}$ regularity of the regular set. We consider
the solution $u$ to the Signorini
problem \eqref{bb1i}--\eqref{bb4i} above, and we set $v$ as in
\eqref{vi}.

\begin{dfn}\label{D:weiss}
Let $r_0>0$ be as in Theorem~\ref{T:lym}. For $r\in (0,r_0)$ we define
the \emph{$\frac 32$-th generalized Weiss-type functional} as follows
\begin{align}\label{W}
W_L(v,r) &= \frac{\sigma(r)}{r^{3}} \left\{J_L(v,r) - \frac{3/2}{r}
  M_L(v,r)\right\}\\
&=\frac{1}{r^{n+1}}I_L(v,r)-\frac{3/2}{r^{n+2}}H_L(v, r)\label{WW'}\\
&=\frac{1}{r^{n+1}}\int_{B_r}[\langle A(x)\nabla v, \nabla
v\rangle +vf] - \frac{3/2}{r^{n+2}}\int_{S_r}v^2\mu,\nonumber
\end{align}
where $I_L$, $J_L$, $H_L$, and $M_L(v,r)$ are as in
Section~\ref{S:not}. Whenever $L=\Delta$, we write
$W(v,r)=W_{\Delta}(v,r)$, and unless we want to stress the dependence
on $v$, we will write $W_L(r)=W_L(v,r)$.
\end{dfn}

In this section we will show that there exists $C>0$ such that
$r\mapsto W_L(v,r)+Cr^{1/2}$ is monotone nondecreasing, that the limit
$\lim\limits_{r\rightarrow 0}W_L(v,r)$ exists and is zero, and that
$W_L(v,r)\ge -Cr^{1,2}.$ We start by proving that $W_L(v,\cdot)$ is
bounded.

\begin{lemma}\label{L:Wbounded}
The functional $W_L(v,\cdot)$ is bounded on the interval $(0,r_0)$.
\end{lemma}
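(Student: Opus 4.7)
The plan is to argue directly from the representation \eqref{WW'}, namely
\[
W_L(v,r) \;=\; \frac{I_L(v,r)}{r^{n+1}} \;-\; \frac{3/2\,H_L(v,r)}{r^{n+2}},
\]
combined with the bounds already collected in Corollary~\ref{C:HI}. First, I would note that the two displayed forms for $W_L(v,r)$ in Definition~\ref{D:weiss} are equivalent: since $\sigma(r)=\psi(r)/r^{n-2}$, $J_L(v,r)=I_L(v,r)/\psi(r)$, and $M_L(v,r)=H_L(v,r)/\psi(r)$, the factors of $\psi$ cancel and one indeed obtains \eqref{WW'} from \eqref{W}. One could also recall the identity $I_L(v,r)=D_L(v,r)+\int_{B_r}vf$ from \eqref{Iv} to match \eqref{WW'} with the last line of \eqref{W}.

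Once we are working with \eqref{WW'}, the result follows at once: by Corollary~\ref{C:HI}, for all $r\in(0,r_0)$ we have $|I_L(v,r)|\le C r^{n+1}$ and $H_L(v,r)\le C r^{n+2}$, with $C$ universal (depending also on $\delta$, $H(r_0)$ and $\|f\|_{L^\infty(B_1)}$). Hence
\[
|W_L(v,r)| \;\le\; \frac{|I_L(v,r)|}{r^{n+1}} + \frac{3}{2}\,\frac{H_L(v,r)}{r^{n+2}} \;\le\; C + \tfrac{3}{2}C,
\]
which is the claimed boundedness on $(0,r_0)$.

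There is no real obstacle here: everything reduces to pointwise size estimates that were already established in \cite{GS} and repackaged in Corollary~\ref{C:HI} via the $C^{1,1/2}$ growth estimates $|v(x)|\le C|x|^{3/2}$ and $|\nabla v(x)|\le C|x|^{1/2}$ from Lemma~\ref{L:newcrucial}. The only minor bookkeeping point is ensuring that the form \eqref{WW'} is the one used, rather than the $\sigma/J/M$ form; this is why I would begin by checking the equivalence of the two expressions. The actual monotonicity (and sign) statements announced right after Lemma~\ref{L:Wbounded} — that $W_L(v,r)+Cr^{1/2}$ is nondecreasing, $W_L(v,0+)=0$, and $W_L(v,r)\ge -Cr^{1/2}$ — are saved for later results and are not needed here.
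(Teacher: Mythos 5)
Your proof is correct and follows exactly the paper's argument: apply the triangle inequality to the representation \eqref{WW'} and invoke the bounds $|I(r)|\le Cr^{n+1}$, $H(r)\le Cr^{n+2}$ from Corollary~\ref{C:HI}. The preliminary verification that \eqref{W} and \eqref{WW'} agree is harmless bookkeeping that the paper leaves implicit.
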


\begin{proof}
Indeed, from Corollary \ref{C:HI} we have
\[
|W_L(v,r)| 
\le \frac{1}{r^{n+1}} \left\{|I(r)| + \frac{3}{2r} H(r)\right\}\le C.\qedhere
\]
\end{proof}

The functional $W_L(r)$ in \eqref{W} is tailor-made to the study of
regular free boundary points of solutions of the Signorini problem
\eqref{c11i}--\eqref{c44i}. The following ``almost monotonicity''
property of $W_L$ plays a crucial role in our further study.

\begin{theo}[Weiss type monotonicity formula]\label{T:weiss}
Let $v$ satisfy \eqref{c11i}--\eqref{c44i} and let $0\in
\Gamma_{3/2}(v)$. Then, there exist universal constant $C, r_0>0$,
depending also on $\|f\|_{L^\infty(B_1)}$, such that for every
$0<r<r_0$ one has
\begin{equation}\label{W'}
\frac{d}{dr}\left(W_L(v,r) + C r^{1/2}\right) \ge \frac{2}{r^{n+1}}
\int_{S_r}\bigg(\frac{\langle A\nabla v,\nu\rangle }{\sqrt \mu} -
\frac{(3/2)\sqrt \mu}{r} v\bigg)^2.
\end{equation}
In particular, there exists $C>0$ such that function $r\mapsto
W_L(v,r)+Cr^{1/2}$ is monotone non\-de\-creasing, and therefore the
limit $W_L(v,0+) \defeq  \lim\limits_{r\rightarrow 0}W_L(v,r)$
exists.
\end{theo}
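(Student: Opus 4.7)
The plan is to differentiate $W_L(v,r)$ in the form \eqref{WW'}, substitute the explicit formulas for $H'(r)$ and $I'(r)$, and recognize the right-hand side of \eqref{W'} modulo an error bounded by $C/\sqrt{r}$. Differentiating \eqref{WW'} directly gives
\[
W_L'(r)=\frac{I'(r)}{r^{n+1}}-\frac{(n+1)I(r)}{r^{n+2}}-\frac{(3/2)H'(r)}{r^{n+2}}+\frac{3(n+2)H(r)}{2r^{n+3}},
\]
into which I insert $H'(r)=2I(r)+\int_{S_r}v^2 L|x|$ from Lemma~\ref{L:H'i}.

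The central ingredient is a Rellich-Pohozaev identity for $I'(r)$. Starting from \eqref{J'Jnew} together with $I=\psi J$, the $\pm\psi'(r)J(r)$ contributions cancel and one obtains
\[
I'(r)=\frac{n-2}{r}I(r)+2\int_{S_r}\frac{\langle A\nabla v,\nu\rangle^2}{\mu}+R(r),
\]
where $R(r)$ groups an $O(1)\cdot I(r)$ term and the three $f$-integrals appearing on the right of \eqref{J'Jnew}. Using Corollary~\ref{C:HI} together with the pointwise estimates $|v|\le C|x|^{3/2}$ and $|\nabla v|\le C|x|^{1/2}$ from Lemma~\ref{L:newcrucial}, each summand of $R(r)$ is $O(r^{n+1/2})$, so that $|R(r)|/r^{n+1}\le Cr^{-1/2}$.

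The second ingredient is the observation that since $A\in C^{0,1}$ with $A(0)=I$, one has $L|x|=(n-1)/|x|+O(1)$ and $\mu(x)=1+O(|x|)$, which together yield
\[
\int_{S_r}v^2 L|x|=\frac{n-1}{r}H(r)+T(r),\qquad |T(r)|\le Cr^{n+2}.
\]
Substituting this identity and the expression for $I'(r)$ into the formula for $W_L'(r)$, the coefficient of $I(r)/r^{n+2}$ becomes $(n-2)-(n+4)=-6$ and the coefficient of $H(r)/r^{n+3}$ becomes $3(n+2)/2-3(n-1)/2=9/2$. Since the expansion of the squared integrand in \eqref{W'} is exactly
\[
2\int_{S_r}\left(\frac{\langle A\nabla v,\nu\rangle}{\sqrt{\mu}}-\frac{(3/2)\sqrt{\mu}}{r}v\right)^{\!2}=2\int_{S_r}\frac{\langle A\nabla v,\nu\rangle^2}{\mu}-\frac{6}{r}I(r)+\frac{9}{2r^2}H(r),
\]
one arrives at
\[
W_L'(r)-\frac{2}{r^{n+1}}\int_{S_r}\left(\frac{\langle A\nabla v,\nu\rangle}{\sqrt{\mu}}-\frac{(3/2)\sqrt{\mu}}{r}v\right)^{\!2}=\frac{R(r)}{r^{n+1}}-\frac{3T(r)}{2r^{n+2}},
\]
whose right-hand side is bounded in absolute value by $Cr^{-1/2}$ on $(0,r_0)$. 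This establishes \eqref{W'} after choosing $C$ appropriately; the monotonicity of $r\mapsto W_L(v,r)+Cr^{1/2}$ follows by integration, and the existence of the limit $W_L(v,0+)$ follows by combining this monotonicity with Lemma~\ref{L:Wbounded}.

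The main obstacle is the term $\int_{S_r}v^2 L|x|$, which is \emph{a priori} of order $1/r$ at the relevant scale and would prevent any bound better than $W_L'(r)\ge -C/r$ if handled crudely. The drop to $O(r^{-1/2})$ hinges on isolating the leading part $(n-1)/|x|$ of $L|x|$ and observing its exact cancellation with the $3(n+2)H(r)/(2r^{n+3})$ contribution produced by differentiating $r^{-(n+2)}$, which reduces the $3$-coefficient to $n-1$ and net $(n+2)-(n-1)=3$; the three $f$-integrals grouped into $R(r)$ are then dominated by $Cr^{-1/2}$ directly through the sharp pointwise rates of Lemma~\ref{L:newcrucial}.
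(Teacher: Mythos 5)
Your proposal is correct and follows essentially the same route as the paper: the engine in both cases is the Rellich--Pohozaev identity \eqref{J'Jnew} (which you translate into a formula for $I'(r)$ via $I=\psi J$, while the paper keeps the $\psi$-normalized quantities $J$, $M$ and uses $\psi'/\psi=(n-1)/r+O(1)$ in place of your expansion of $\int_{S_r}v^2L|x|$), followed by recognizing the perfect square and bounding the $f$-integrals by $Cr^{n+1/2}$ through \eqref{vbounds} and Corollary~\ref{C:HI}. The coefficient bookkeeping ($-6$ for $I/r^{n+2}$, $9/2$ for $H/r^{n+3}$) matches the paper's computation exactly.
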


\begin{proof}
We have from Definition \ref{D:weiss} above,
\begin{align*}
\frac{d}{dr}W_L(v,r) & = \frac{\sigma(r)}{r^{3}}\left\{\left(\frac{\sigma'(r)}{\sigma(r)} - \frac{3}{r}\right)\left(J(r) - \frac{3/2}{r} M(r)\right)  + \left(J'(r) - \frac{3/2}{r} M'(r) + \frac{3/2}{r^2} M(r)\right)\right\}
\\
& =  \frac{\sigma(r)}{r^{3}}\left\{\left(\frac{\psi'(r)}{\psi(r)} - \frac{n-2}{r} - \frac{3}{r}\right) \left(J(r) - \frac{3/2}{r} M(r)\right) + \left(J'(r) - \frac{3/2}{r} M'(r) + \frac{3/2}{r^2} M(r)\right)\right\}.
\end{align*}
Using \eqref{J'Jnew} and \eqref{M'new} above, we thus find
\begin{align*}
\frac{d}{dr}W_L(v,r) & = \frac{\sigma(r)}{r^{3}}\bigg\{\left(\frac{\psi'(r)}{\psi(r)} - \frac{n-2}{r} - \frac{3}{r}\right)\left(J(r) - \frac{3/2}{r} M(r)\right) 
\\ 
&\qquad\mbox{} + \left(\bigg(- \frac{\psi'(r)}{\psi(r)} +\frac{n-2}{r}+O(1)\right) J(r)+ \frac{1}{\psi(r)} \bigg\{2\int_{S_r}\frac{\langle A\nabla v,\nu\rangle^2}{\mu}   
\\
&\qquad\mbox{} -\frac{2}{r}\int_{B_r}\langle Z,\nabla v\rangle f -\left(\frac{n-2}{r}+O(1)\right)\int_{B_r}v f +\int_{S_r} v f\bigg\} - \frac{3}{r} J(r) + \frac{3/2}{r^2} M(r)\bigg)\bigg\}
\\
& = \frac{\sigma(r)}{r^{3}}\bigg\{\left(- \frac{6}{r} + O(1)\right) J(r) + \frac{2}{\psi(r)} \int_{S_r}\frac{\langle A\nabla v,\nu\rangle^2}{\mu} 
\\
&\qquad\mbox{} + \frac{3/2}{r^2}\left(1 - \left(\frac{r\psi'(r)}{\psi(r)} - n+2 - 3\right)\right) M(r) 
\\
&\qquad\mbox{} + \frac{1}{\psi(r)}\bigg(-\frac{2}{r}\int_{B_r}\langle Z,\nabla v\rangle f -\left(\frac{n-2}{r}+O(1)\right)\int_{B_r}v f +\int_{S_r} v f\bigg)\bigg\}. 
\end{align*}
By \eqref{per4} in Lemma~\ref{L:psi1} above, we see that $\frac{\psi'(r)}{\psi(r)} = \frac{n-1}{r} + O(1)$. We thus obtain from the latter chain of equalities
\begin{align*}
\frac{d}{dr}W_L(v,r) & = \frac{2\sigma(r)}{r^{3}\psi(r)}\bigg\{\left(- \frac{3}{r} + O(1)\right) I(r) +\int_{S_r}\frac{\langle A\nabla v,\nu\rangle^2}{\mu}  + \frac{9/4}{r^2}\big(1 + O(r)\big) H(r)\bigg\}
\\
&\qquad\mbox{} + \frac{\sigma(r)}{r^{3}\psi(r)}\bigg(-\frac{2}{r}\int_{B_r}\langle Z,\nabla v\rangle f -\left(\frac{n-2}{r}+O(1)\right)\int_{B_r}v f +\int_{S_r} v f\bigg).
\end{align*}
By the definitions \eqref{Hv} and \eqref{Iv} of $H(r)$ and $I(r)$ we have
\[
\int_{S_r}\bigg(\frac{\langle A\nabla v,\nu\rangle}{\sqrt \mu} - \frac{(3/2)\sqrt \mu}{r} v\bigg)^2 = \int_{S_r}\frac{\langle A\nabla v,\nu\rangle ^2}{\mu} - \frac{3}{r} I(r) + \frac{9/4}{r^2} H(r).
\]
Since $\sigma(r)=r^{2-n}\psi(r)$, we conclude that
\begin{align}\label{mf}
\frac{d}{dr}W_L(v,r) & = \frac{2}{r^{n+1}} \bigg\{\int_{S_r}\bigg(\frac{\langle A\nabla v,\nu\rangle }{\sqrt \mu} - \frac{(3/2)\sqrt \mu}{r} v\bigg)^2 + O(1) I(r) + \frac{O(1)}{r} H(r)\bigg\}
\\
&\qquad\mbox{} + \frac{1}{r^{n+1}}\bigg(-\frac{2}{r}\int_{B_r}\langle Z,\nabla v\rangle f -\left(\frac{n-2}{r}+O(1)\right)\int_{B_r}v f +\int_{S_r} v f\bigg).
\notag
\end{align}
Returning to \eqref{mf} and making use of  \eqref{mf4}, we conclude that
\begin{align}\label{mf5}
\frac{d}{dr}W_L(v,r)  & = \frac{2}{r^{n+1}} \int_{S_r}\bigg(\frac{\langle A\nabla v,\nu\rangle }{\sqrt \mu} - \frac{(3/2)\sqrt \mu}{r} v\bigg)^2 + O(1) 
\\
&\qquad\mbox{} + \frac{1}{r^{n+1}}\bigg(-\frac{2}{r}\int_{B_r}\langle Z,\nabla v\rangle f -\left(\frac{n-2}{r}+O(1)\right)\int_{B_r}v f +\int_{S_r} v f\bigg).
\notag
\end{align}
The proof of the estimate \eqref{W'}
will be completed if we can show that there exists a universal
constant $C>0$ such that 
\[
\bigg|-\frac{2}{r}\int_{B_r}\langle Z,\nabla v\rangle f -\left(\frac{n-2}{r}+O(1)\right)\int_{B_r}v f +\int_{S_r} v f\bigg| \le C r^{n+\frac 12}.
\]
From the expression of the vector field $Z=\frac{A(x)x}{\mu}$, see
\eqref{Z} above, we have $|Z|\le C r$ for $|x|\le r$. Since $f\in
L^\infty$, we obtain from the second inequality in \eqref{vbounds} 
\[
\left|-\frac{2}{r}\int_{B_r}\langle Z,\nabla v\rangle f\right| \le C
r^{n+\frac 12},
\]
for a universal $C>0$ which also depends on $\|f\|_{L^\infty(B_1)}$. The first inequality in \eqref{vbounds} gives instead
\[
\left|\left(\frac{n-2}{r}+O(1)\right)\int_{B_r}v f +\int_{S_r} v f\right| \le C r^{n+\frac 12},
\]
thus completing the proof of \eqref{W'}.
The existence of $W_L(v,0+)$ now follows from the monotonicity and
boundedness of $W_L(v,r)+Cr^{1/2}$, see Lemma~\ref{L:Wbounded}.
\end{proof}

From Theorem~\ref{T:weiss} we obtain that $W_L(v,
0+)=\lim\limits_{r\rightarrow 0}W_L(v,r)$ exists. In the next lemma we
prove that this limit must actually be zero.

\begin{lemma}\label{L:W0nonzero}  Let $v$ satisfy
  \eqref{c00i}--\eqref{c44i} with $f\in L^\infty(B_1)$ and  $0\in
  \Gamma_{3/2}(v)$. Then,  $W_L(v,0+) = 0$.
\end{lemma}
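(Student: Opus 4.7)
I plan to compute $W_L(v,r)$ in terms of the Almgren-type rescalings $\tilde v_{0,r}(x) = v(rx)/d_{0,r}$, $d_{0,r} = (H(r)/r^{n-1})^{1/2}$, and then pass to the limit along a subsequence on which $\tilde v_{0,r_j}$ converges to a $(3/2)$-homogeneous global solution of the Signorini problem for the Laplacian. A direct change of variables in \eqref{W} gives
\begin{equation*}
W_L(v,r) = \frac{d_{0,r}^2}{r^3}\left\{\int_{B_1}\langle A(rx)\nabla \tilde v_{0,r}, \nabla \tilde v_{0,r}\rangle - \frac{3}{2}\int_{S_1}\tilde v_{0,r}^2\,\mu_{0,r}\right\} + \frac{d_{0,r}}{r}\int_{B_1}\tilde v_{0,r}\,f(rx)\,dx,
\end{equation*}
where $\mu_{0,r}(x) = \langle A(rx)x,x\rangle/|x|^2$. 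From Corollary~\ref{C:HI} we have $H(r)\le Cr^{n+2}$, so $d_{0,r}\le Cr^{3/2}$; hence $d_{0,r}^2/r^3$ is bounded and $d_{0,r}/r\le Cr^{1/2}\to 0$.

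Now fix an arbitrary sequence $r_j\to 0+$. The argument carried out inside the proof of Lemma~\ref{vxr-h} already produces a uniform $C^{1,1/2}(B_1^\pm\cup B_1')$ bound on $\tilde v_{0,r_j}$, so along a subsequence $\tilde v_{0,r_j}\to h$ in $C^{1,\alpha}(B_1^\pm\cup B_1')$, and Lemma~\ref{vxr-h} itself forces the limit to have the form $h(x) = c_n\Re(\langle x',e'\rangle + i|x_n|)^{3/2}$ for some unit $e'\in\R^{n-1}$. Since $A(r_j x)\to I$ and $\mu_{0,r_j}\to 1$ uniformly on $\overline{B_1}$, the bracket in the displayed identity converges to $\int_{B_1}|\nabla h|^2 - \frac{3}{2}\int_{S_1} h^2$. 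A direct computation using $\Delta h = 0$ in $B_1^+\cup B_1^-$, the complementarity condition $h(\partial_{\nu_+}h+\partial_{\nu_-}h)=0$ on $B_1'$, and the Euler identity $x\cdot\nabla h = \frac{3}{2}h$ on $S_1$ (from $(3/2)$-homogeneity) gives
\begin{equation*}
\int_{B_1}|\nabla h|^2 = \int_{S_1} h\,\partial_\nu h = \frac{3}{2}\int_{S_1} h^2,
\end{equation*}
so the bracket vanishes in the limit. The $f$-term is controlled by $Cr^{1/2}\to 0$ using the uniform $L^\infty(B_1)$ bound on the rescalings. Therefore $W_L(v,r_j)\to 0$ along the subsequence, and since $W_L(v,0+)$ exists by Theorem~\ref{T:weiss}, the full limit must equal $0$.

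The only point that requires some care is verifying that the $C^{1,\alpha}(B_1^\pm\cup B_1')$ compactness of the rescalings is enough for passing to the limit in both integrals, but this is immediate because the thin ball $B_1'$ has measure zero in $B_1$; the uniform $L^\infty$ bound needed for the $f$-term is contained in the proof of Lemma~\ref{vxr-h}. Apart from these essentially bookkeeping points, the proof is a routine blow-up argument combined with the existence of $W_L(v,0+)$ guaranteed by Theorem~\ref{T:weiss}.
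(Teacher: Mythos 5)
Your argument is correct, but it takes a genuinely different and considerably heavier route than the paper. The paper's proof is a two-line algebraic observation: using $M'=2J$ and $\sigma(r)M(r)=H(r)/r^{n-2}$, one rewrites
\[
W_L(v,r)=\frac{H(r)}{r^{n+2}}\left(\frac{r}{2}\,\frac{M'(r)}{M(r)}-\frac32\right),
\]
and then the conclusion is immediate from the boundedness of $H(r)/r^{n+2}$ (Corollary~\ref{C:HI}) together with $\frac{r}{2}\frac{M'(r)}{M(r)}\to\frac32$ at a regular point (Lemma~\ref{L:boundonHr}); no blow-up or compactness is needed. Your proof instead runs the full blow-up machinery: the change of variables to the Almgren rescalings is computed correctly (and the prefactors $d_{0,r}^2/r^3\le C$, $d_{0,r}/r\le Cr^{1/2}$ are the right bounds), the $C^{1,\alpha}(B_1^\pm\cup B_1')$ compactness and the identification of subsequential limits as elements of $H$ do follow from (the proof of) Lemma~\ref{vxr-h}, and the vanishing of the bracket in the limit is exactly the content of Remark~\ref{R:weiss}. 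Both proofs use regularity of the origin in an essential way — yours through the classification of Almgren blow-ups at points of $\Gamma_{3/2}(v)$, the paper's through the frequency limit $\tilde N(0+)=3/2$. What your approach buys is a transparent geometric interpretation (the limit of $W_L$ is $W_\Delta$ of a $3/2$-homogeneous global Signorini solution, which vanishes); what it costs is dependence on Section~\ref{S:reg}, whereas the paper's argument is self-contained at the level of the frequency identities. A minor remark: you do not actually need Theorem~\ref{T:weiss} at the end, since the subsequence-of-a-subsequence argument already shows $W_L(v,r)\to 0$ along every sequence $r_j\to 0$.
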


\begin{proof} Recall from \eqref{W} that one has
\begin{equation}\label{W322}
W_L(v,r)= W_L(r) = \frac{\sigma(r)}{2r^{3}}\left\{2J(r)-\frac{3}{r}M(r)\right\} = \frac{H(r)}{r^{n+2}}\left\{\frac{r}2\,\frac{M'(r)}{M(r)}- \frac32\right\},
\end{equation}
where in the last equality we have used \eqref{M'new} and \eqref{Mii}
above. The proof now follows from the boundedness of $\frac{H(r)}{r^{n+2}}$ by
\eqref{mf4} and the convergence
$\frac{r}2\,\frac{M'(r)}{M(r)}\to\frac32$ by Lemma~\ref{L:boundonHr}.
\end{proof}

\begin{cor}\label{C:weiss}
Let $C$ and $r_0$ be as in Theorem~\ref{T:weiss}. Then, for every
$0<r<r_0$ one has
\[
W_L(v,r) \ge - C r^{1/2}.
\]
\end{cor}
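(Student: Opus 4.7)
The proof of this corollary is essentially a one-line consequence of the two immediately preceding results, so the plan is very short. First, I would invoke Theorem~\ref{T:weiss}: with the same universal constant $C$ and the same $r_0$, the function $r\mapsto W_L(v,r)+Cr^{1/2}$ is monotone non-decreasing on $(0,r_0)$. Second, I would apply Lemma~\ref{L:W0nonzero}, which guarantees that $W_L(v,0+)=0$.

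Combining these, for every $0<r<r_0$ the monotonicity yields
\[
W_L(v,r)+Cr^{1/2} \;\ge\; \lim_{s\to 0+}\bigl(W_L(v,s)+Cs^{1/2}\bigr) \;=\; W_L(v,0+) \;=\; 0,
\]
which rearranges to the claimed bound $W_L(v,r)\ge -Cr^{1/2}$. There is no genuine obstacle here: all the analytic content (the almost-monotonicity via the boundary square term and the lower-order error $O(r^{1/2})$ from the inhomogeneity $f$, together with the identification of the limit using $\tilde N(0+)=3/2$ through Lemma~\ref{L:boundonHr}) has already been absorbed into Theorem~\ref{T:weiss} and Lemma~\ref{L:W0nonzero}. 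Thus the corollary is simply the assertion that a non-decreasing function with initial value $0$ at $0+$ is non-negative on $(0,r_0)$, translated back by the perturbation $Cr^{1/2}$.
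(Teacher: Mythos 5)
Your proof is correct and is exactly the argument the paper gives: combine the monotonicity of $r\mapsto W_L(v,r)+Cr^{1/2}$ from Theorem~\ref{T:weiss} with the limit $W_L(v,0+)=0$ from Lemma~\ref{L:W0nonzero}. Nothing further is needed.
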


\begin{proof}
This follows directly by combining Theorem~\ref{T:weiss} with
Lemma~\ref{L:W0nonzero}.
\end{proof}

\section{Homogeneous blowups}\label{S:hbu}

In this section we analyze the uniform limits of some appropriate
scalings of a solution $v$ to the Signorini problem
\eqref{c11i}--\eqref{c44i} by making essential use of the monotonicity
formula in Theorem~\ref{T:weiss} above. These scalings, together with
the Almgren type ones defined in \eqref{Almgrentype}, will be
instrumental in Section \ref{S:final}.

Let $v$ satisfy \eqref{c11i}--\eqref{c44i} and let $0\in
\Gamma_{3/2}(v)$. We consider the following \emph{homogeneous
  scalings} of $v$
\begin{equation}\label{s}
v_r(x)=\frac{v(rx)}{r^{3/2}}.
\end{equation}

\begin{lemma}\label{L:eqvr} Define $A_r(x)=A(rx)$ and
  $f_r(x)=r^{1/2}f(rx)$. Then, $v_r$ solves the thin obstacle problem
  \eqref{c11i}--\eqref{c55i} in $B_{\frac{1}{r}}$ with zero thin
  obstacle, operator $L_r=\div(A_r\nabla\cdot)$ and right-hand side
  $f_r$ in \eqref{c11i}.
\end{lemma}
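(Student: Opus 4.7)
The plan is to verify directly by the chain rule that each of the conditions \eqref{c11i}--\eqref{c44i} (and hence their weak form \eqref{c55i}) is preserved under the scaling $v_r(x) = v(rx)/r^{3/2}$, using the stated rescalings $A_r(x) = A(rx)$, $f_r(x) = r^{1/2} f(rx)$. Since the scaling is $(3/2)$-homogeneous, tailored to the regular free boundary points, no obstacle beyond bookkeeping of the powers of $r$ should arise.

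First I would compute derivatives. From the definition, $\nabla v_r(x) = r^{-1/2}\, (\nabla v)(rx)$ and $D^2 v_r(x) = r^{1/2}\, (D^2 v)(rx)$. Consequently $A_r(x)\nabla v_r(x) = r^{-1/2}\, (A\nabla v)(rx)$, and taking divergence in $x$ (producing an additional factor $r$ from the chain rule),
\begin{equation*}
L_r v_r(x) = \div_x\!\bigl(A_r(x)\nabla v_r(x)\bigr) = r^{1/2}\, (\div(A\nabla v))(rx) = r^{1/2} f(rx) = f_r(x),
\end{equation*}
which is \eqref{c11i} for $v_r$ with operator $L_r$ and right-hand side $f_r$, valid on $B_{1/r}^+\cup B_{1/r}^-$. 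Since $f\in L^\infty(B_1)$, clearly $f_r\in L^\infty(B_{1/r})$.

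Next I would check the Signorini conditions on the thin ball $B'_{1/r}$. The inequality \eqref{c22i} is immediate: $v_r(x',0) = r^{-3/2} v(rx',0) \ge 0$. For the co-normal sum, observe that the outward unit normals $\nu_\pm = \mp e_n$ are preserved by the dilation, so
\begin{equation*}
\langle A_r\nabla v_r, \nu_\pm\rangle(x',0) = r^{-1/2}\, \langle A\nabla v, \nu_\pm\rangle(rx',0),
\end{equation*}
and the non-negativity \eqref{c33i} and complementarity \eqref{c44i} for $v$ at $rx'$ transfer verbatim to $v_r$ at $x'$, with the common positive factor $r^{-1/2}$ (resp.\ $r^{-2}$ after multiplying by $v_r$) dropping out. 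Finally, the thin obstacle is still zero because $v \geq 0$ on $B_1'$ is homogeneous of any degree in the value of the obstacle.

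The last step is to record the weak reformulation \eqref{c55i} for the rescaled data: since the equation holds in $B_{1/r}^\pm$ and the co-normal sum is well-defined on $B_{1/r}'$, the identity
\begin{equation*}
\int_{B_{1/r}} \bigl(\langle A_r\nabla v_r,\nabla\eta\rangle + f_r \eta\bigr) = \int_{B_{1/r}'}\bigl(\langle A_r\nabla v_r,\nu_+\rangle + \langle A_r\nabla v_r,\nu_-\rangle\bigr)\eta
\end{equation*}
for $\eta\in C^\infty_0(B_{1/r})$ follows by the same integration by parts as for \eqref{c55i} (alternatively, it can be obtained from \eqref{c55i} for $v$ by the change of variables $y = rx$). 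This exhausts the list \eqref{c11i}--\eqref{c55i}, so $v_r$ solves the thin obstacle problem with the claimed data. The only subtlety, if any, lies in verifying that the power of $r$ chosen in $f_r = r^{1/2} f(rx)$ is exactly the one forced by the homogeneity $3/2$ of the scaling, which the computation above makes explicit.
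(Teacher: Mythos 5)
Your proof is correct and follows essentially the same route as the paper: the paper's own argument notes that only the weak identity \eqref{c55i} needs checking and verifies it by the change of variables $y=rx$ with the scaled test function $\rho(y)=r^{1/2}\eta(y/r)$, which is exactly the alternative you mention at the end. Your additional pointwise verification of \eqref{c11i}--\eqref{c44i}, with the correct bookkeeping of the powers of $r$, is the part the paper treats as immediate.
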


\begin{proof}
We only need to verify \eqref{c55i}. Indeed, given $\eta\in
C^{\infty}_0(B_{\frac{1}{r}})$, define $\rho\in C^\infty_0(B_1)$ by
letting $\rho(y)=r^{1/2}\eta\left(\frac{y}{r}\right)$. By a change of
variable one easily verifies that
\begin{align}\label{mf6}
\int_{B_{\frac{1}{r}}}\langle A_r\nabla v_r,\nabla \eta\rangle &= \int_{B_{\frac{1}{r}}'}\left[\langle A_r(x)\nabla v_r(x),\nu_+\rangle +\langle A_r(x)\nabla v_r(x),\nu_-\rangle\right]\eta(x)
\\
&\qquad\mbox{} - \int_{B_{\frac{1}{r}}}f_r(x)\eta(x).
\notag\qedhere
\end{align}
\end{proof}

\begin{lemma}\label{L:homblowupnonzero} Let $v$ satisfy
  \eqref{c11i}--\eqref{c44i} and let $0\in \Gamma_{3/2}(v)$. Given
  $r_j\rightarrow 0$, there exists a subsequence (which we will still
  denote by $r_j$) and a function $v_0\in
  C^{1,\alpha}_{\text{loc}}(\R^n_\pm\cup\R^{n-1})$
  for any $\alpha\in (0,1/2)$, such that $v_{r_j}\rightarrow v_0
  \text{ in } C^{1,\alpha}_\loc(\R^n_\pm\cup\R^{n-1})$. Such $v_0$ is
  a global solution of the Signorini
  problem \eqref{c11i}--\eqref{c44i} in $\Rn$  with zero thin obstacle and zero
  right-hand side $f$. 
\end{lemma}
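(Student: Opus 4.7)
The plan is to obtain compactness of $\{v_r\}$ in $C^{1,\alpha}_\loc$ via uniform rescaled pointwise and $C^{1,1/2}$ estimates, and then to pass to the limit in the variational formulation provided by Lemma~\ref{L:eqvr}.

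First I would rescale the pointwise estimates \eqref{vbounds}. For any fixed $R>0$ and $r < r_0/R$, the definition $v_r(x)=v(rx)/r^{3/2}$ yields
\[
|v_r(x)| \le C|x|^{3/2}, \qquad |\nabla v_r(x)| \le C|x|^{1/2}, \qquad |x|\le R,
\]
with $C$ independent of $r$. By Lemma~\ref{L:eqvr}, $v_r$ solves the Signorini problem in $B_{1/r}$ for the operator $L_r=\div(A_r\nabla\cdot)$ with zero thin obstacle and right-hand side $f_r(x)=r^{1/2}f(rx)$. The rescaled coefficients $A_r(x)=A(rx)$ satisfy \eqref{Asym}--\eqref{mjai} with the same ellipticity constant $\la$ and with Lipschitz seminorm $rQ \le Q$, while $\|f_r\|_{L^\infty(B_R)}\le r^{1/2}\|f\|_{L^\infty(B_1)}\to 0$. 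Hence the interior/boundary $C^{1,1/2}$ regularity result of \cite{GS}, applied to $v_r$, furnishes uniform bounds in $C^{1,1/2}(B_R^\pm\cup B_R')$ for all $r$ sufficiently small, and a standard Arzelà-Ascoli plus diagonal subsequence argument produces $v_{r_j}\to v_0$ in $C^{1,\alpha}_\loc(\R^n_\pm\cup\R^{n-1})$ for any $\alpha\in(0,1/2)$.

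To identify $v_0$, I would pass to the limit in the variational identity \eqref{mf6}. Since $A(0)=I$ and $A$ is Lipschitz, $A_{r_j}\to I$ uniformly on compact subsets of $\R^n$; moreover $f_{r_j}\to 0$ uniformly on compact sets, and $\nabla v_{r_j}\to \nabla v_0$ locally uniformly on $\R^n_\pm\cup\R^{n-1}$. Thus, for any $\eta\in C_0^\infty(\R^n)$ eventually supported in $B_{1/r_j}$, \eqref{mf6} passes to
\[
\int_{\R^n}\langle \nabla v_0,\nabla\eta\rangle = \int_{\R^{n-1}}(\partial_n^- v_0 - \partial_n^+ v_0)\,\eta.
\]
This gives $\Delta v_0=0$ in $\R^n_+\cup\R^n_-$ together with the Signorini sign condition $\partial_n^- v_0-\partial_n^+ v_0\ge 0$ on $\R^{n-1}$ (test with $\eta\ge 0$ compactly supported in $\R^{n-1}$). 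The constraint $v_0\ge 0$ on $\R^{n-1}$ and the complementarity $v_0\,(\partial_n^- v_0 - \partial_n^+ v_0)=0$ on $\R^{n-1}$ are both inherited pointwise from \eqref{c22i}, \eqref{c44i} via the $C^{1,\alpha}$ convergence, using that $A_{r_j}(x',0)\nu_\pm\to \mp e_n$. Hence $v_0$ is a global solution of \eqref{c11i}--\eqref{c44i} in $\R^n$ with $L=\Delta$ and $f\equiv 0$.

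The only delicate point is ensuring that the $C^{1,1/2}$ estimates of \cite{GS} apply to the family $\{v_r\}$ with a constant independent of $r$. This is precisely the purpose of the normalization $v(rx)/r^{3/2}$: the scaling preserves the pointwise bounds of \eqref{vbounds}, does not enlarge the ellipticity or Lipschitz constants of the coefficients, and makes the forcing term decay like $r^{1/2}$, so all parameters entering the regularity constants are uniformly controlled as $r\to 0^+$. The hypothesis $0\in\Gamma_{3/2}(v)$ does not enter the compactness argument directly; it is used upstream in Lemma~\ref{L:newcrucial} to secure \eqref{vbounds}.
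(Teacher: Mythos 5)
Your proposal is correct and follows essentially the same route as the paper: rescale the pointwise bounds \eqref{vbounds}, invoke the uniform $C^{1,1/2}$ estimates of \cite{GS} for the scaled problems (whose ellipticity, Lipschitz, and forcing parameters are uniformly controlled), extract a $C^{1,\alpha}_{\loc}$ limit by Arzel\`a--Ascoli and a diagonal argument, and pass to the limit in the variational identity \eqref{mf6} using $A_{r_j}\to I$ and $\|f_{r_j}\|_{L^\infty}\to 0$. The only (inessential) quibble is your closing remark: the bounds \eqref{vbounds} in Lemma~\ref{L:newcrucial} hold at every free boundary point, not only at regular ones, so the hypothesis $0\in\Gamma_{3/2}(v)$ is not needed even upstream for this compactness statement.
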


\begin{proof} By Lemma~\ref{L:newcrucial}, there exist universal
  constants $C, r_0>0$ such that
\[
|v(x)| \le C|x|^{3/2}\quad\text{and }\quad |\nabla v(x)|\le
C|x|^{1/2},\quad |x|<r_0.
\]
 Moreover, as proved in \cite{GS}, $v\in C^{1,\frac
   12}_{\text{loc}}(B_1^{\pm}\cup B_1')$ with
\[
\|v\|_{C^{1,\frac{1}{2}}(B_{\frac{1}{2}}^{\pm}\cup B_{\frac{1}{2}}')}\leq C(n,\lambda,Q,\|v\|_{W^{1,2}(B_1)},\|f\|_{L^{\infty}}).
\]
Given $r_j\searrow 0$, by a standard diagonal process we obtain
convergence in $C^{1,\alpha}_{\text{loc}}(\R^n_\pm\cup\R^{n-1})$, for
any $\alpha \in (0,1/2)$, of a subsequence of the functions $v_{r_j}$
to a function $v_0$. Passing to the limit in \eqref{mf6} we conclude
that $v_0$ is a global solution to the Signorini problem with zero
thin obstacle. The fact (important for our later purposes) that the
right-hand side $f$ is also zero, follows again from \eqref{mf6} since
we obtain
\[
\left|\int_{B_{\frac{1}{r}}}f_r(x)\eta(x)\right| = \sqrt r \left|\int_{B_{\frac{1}{r}}}f(rx)\eta(x)\right| \le \|f\|_{L^\infty(B_1)}\|\eta\|_{L^1(\Rn)} \sqrt r \longrightarrow 0. \qedhere
\]
\end{proof}

\begin{rmrk}
Notice that we have not yet ruled out the possibility that $v_0\equiv
0$. This crucial aspect will be dealt with later, in Proposition
\ref{P:nondege} below. The next result establishes an important
homogeneity property of the global solution $v_0$.
\end{rmrk}

\begin{prop}\label{P:homblowupnonzero} Let $v_0$ be a function as in
  Lemma~\ref{L:homblowupnonzero}. Then, $v_0$ is homogeneous of degree
  $3/2$.
\end{prop}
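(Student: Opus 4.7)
The plan is to use the Weiss monotonicity formula together with the scaling $v_r(x) = v(rx)/r^{3/2}$ to show that the limiting profile $v_0$ makes the (Laplacian) Weiss functional constant, which forces $3/2$-homogeneity via Euler's identity.

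First, I would establish the scaling identity
\begin{equation*}
W_L(v,\,r\rho) \;=\; W_{L_r}(v_r,\,\rho)
\end{equation*}
for all $\rho>0$ and all $r$ with $r\rho<r_0$, where $L_r=\operatorname{div}(A_r\nabla\cdot)$, $A_r(y)=A(ry)$, $f_r(y)=r^{1/2}f(ry)$, and $\mu_r(y)=\langle A_r(y)y,y\rangle/|y|^2$. This is a routine change of variable $x=ry$ in the representation \eqref{WW'} of $W_L$, using that $\nabla v(ry)=r^{1/2}\nabla v_r(y)$, $dx=r^n dy$, and $dS_x=r^{n-1}dS_y$. The powers of $r$ collect to give exactly the factor $r^{n+1}$ in the volume term and $r^{n+2}$ in the spherical term, which cancel against $(r\rho)^{n+1}$ and $(r\rho)^{n+2}$ on the left.

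Second, I would take $r=r_j\to 0$ along the subsequence provided by Lemma~\ref{L:homblowupnonzero}. The left-hand side satisfies
\begin{equation*}
W_L(v,r_j\rho)\;\longrightarrow\;W_L(v,0+)=0
\end{equation*}
by Lemma~\ref{L:W0nonzero}. On the right, since $v_{r_j}\to v_0$ in $C^{1,\alpha}_{\rm loc}(\R^n_\pm\cup\R^{n-1})$, since $A_{r_j}\to I$ and $\mu_{r_j}\to 1$ uniformly on compact sets (Lipschitz continuity of $A$ together with $A(0)=I$), and since $|f_{r_j}|\le r_j^{1/2}\|f\|_{L^\infty}\to 0$, one passes to the limit in both the volume integral over $B_\rho$ and the surface integral over $S_\rho$ (the latter sits at positive distance from the origin, where $C^{1,\alpha}_{\rm loc}$ convergence suffices). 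Hence $W_{L_{r_j}}(v_{r_j},\rho)\to W_\Delta(v_0,\rho)$, and we conclude
\begin{equation*}
W_\Delta(v_0,\rho)=0\quad\text{for every }\rho>0.
\end{equation*}

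Finally, $v_0$ is a global solution of the Signorini problem for the Laplacian with zero thin obstacle and $f\equiv 0$ (Lemma~\ref{L:homblowupnonzero}). For this constant-coefficient, source-free case, Theorem~\ref{T:weiss} reduces to the exact Weiss identity of \cite{GP}: all the $O(1)$ remainders and the drift $Cr^{1/2}$ vanish because $\psi(r)=r^{n-1}$, $\sigma(r)=r$, $\mu\equiv 1$, and the $f$-terms in \eqref{mf} are absent. Hence
\begin{equation*}
\frac{d}{d\rho}W_\Delta(v_0,\rho)=\frac{2}{\rho^{n+1}}\int_{S_\rho}\Bigl(\partial_\nu v_0-\frac{3}{2\rho}v_0\Bigr)^2.
\end{equation*}
Since the left side is identically $0$, the integrand on the right vanishes, i.e.\ $\langle x,\nabla v_0(x)\rangle=\tfrac{3}{2}v_0(x)$ on a.e.\ sphere, and therefore (by continuity) everywhere. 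By Euler's identity, $v_0$ is positively homogeneous of degree $3/2$.

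The main obstacle is really just the careful handling of the scaling identity and the justification of passing to the limit in the surface term $\int_{S_\rho}v_{r_j}^2\mu_{r_j}$; once these are in place, the principle is the standard Weiss-type ``constant Weiss functional $\Longrightarrow$ homogeneity'' mechanism, applied to the limiting constant-coefficient problem.
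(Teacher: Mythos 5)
Your proposal is correct, and it rests on the same two pillars as the paper's argument (the almost-monotonicity of $W_L$ from Theorem~\ref{T:weiss} and the scaling behaviour of the Weiss functional under $v\mapsto v_r$), but it organizes them differently. The paper integrates the differential inequality \eqref{W'} over $(r_jr,r_jR)$, rescales the resulting right-hand side $\int\frac{2}{t^{n+3}}\int_{S_t}\bigl(\tfrac{\langle A\nabla v,x\rangle}{\sqrt\mu}-\tfrac32\sqrt\mu\,v\bigr)^2\,dt$ into an expression in $v_{r_j}$, and passes to the limit there; the left-hand side tends to $0$ merely because $W_L(v,0+)$ \emph{exists}, and the Euler identity $\langle\nabla v_0,y\rangle=\tfrac32 v_0$ drops out directly, with no need to differentiate anything attached to the limit function. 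You instead pass to the limit in the functional itself via the identity $W_L(v,r\rho)=W_{L_r}(v_r,\rho)$ (which is correct — your bookkeeping of the powers of $r$ checks out), conclude $W_\Delta(v_0,\rho)\equiv 0$ using the stronger fact $W_L(v,0+)=0$ from Lemma~\ref{L:W0nonzero}, and only then extract homogeneity by differentiating $W_\Delta(v_0,\cdot)$. That last step is the one place where you import something not proved in this paper: the \emph{exact} Weiss identity
\begin{equation*}
\frac{d}{d\rho}W_\Delta(v_0,\rho)=\frac{2}{\rho^{n+1}}\int_{S_\rho}\Bigl(\partial_\nu v_0-\frac{3}{2\rho}v_0\Bigr)^2
\end{equation*}
for global Signorini solutions of the Laplacian requires the identities $D(\rho)=\int_{S_\rho}v_0\partial_\nu v_0$ and a Rellich-type formula for $D'(\rho)$, i.e.\ the full monotonicity computation of \cite{GP}/\cite{W} applied to $v_0$; citing that is legitimate but it is an extra external ingredient the paper's route avoids. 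What your route buys in exchange is a cleaner intermediate statement ($W_\Delta(v_0,\rho)=0$ for all $\rho$) and no need to track the error terms of Theorem~\ref{T:weiss} through the rescaling. Both arguments are sound.
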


\begin{proof} 
In what follows we denote with $\mu_{r}(x)=\mu(rx)$. Furthermore,
recall that the scaled functions in \eqref{s} are given by $v_r(x) =
r^{-3/2}\ v(rx)$. Let $r_j\searrow 0$ and denote by $v_0$ a
corresponding blowup as in Lemma~\ref{L:homblowupnonzero}.
Let $0<r<R$. We integrate \eqref{W'} over the interval $(r,R)$ obtaining  
\begin{align*}
W_L(v,r_jR)&-W_L(v,r_jr)-C\sqrt{r_j}(\sqrt{R}-\sqrt{r}) \ge \int_{r_jr}^{r_jR}\frac{2}{t^{n+3}}\int_{S_t}\left(\frac{\langle A\nabla v,x\rangle }{\sqrt{\mu}}-\frac{3}{2}\sqrt{\mu}v\right)^2 dt\\
&=\int_r^R\frac{1}{(r_js)^{n+3}}\int_{S_{r_js}}\left(\frac{\langle A\nabla v,x\rangle }{\sqrt{\mu}}-\frac{3}{2}\sqrt{\mu}v\right)^2 r_j ds\\
&=\int_r^R\frac{1}{(r_js)^{n+3}}\int_{S_s}\left(\frac{\langle A(r_jy)\nabla v(r_jy),r_jy\rangle }{\sqrt{\mu(r_jy)}}-\frac{3}{2}\sqrt{\mu(r_jy)}v(r_jy)\right)^2r_j^{n-1}r_j ds\\
&=\frac{1}{r_j^{3}}\int_r^R\frac{1}{s^{n+3}}\int_{S_s}\left(\frac{\langle A_{r_j}(y)\nabla v(r_jy),y\rangle r_j}{\sqrt{\mu_{r_j}(y)}}-\frac{3}{2}\sqrt{\mu_{r_j}(y)}v(r_jy)\right)^2 ds\\
&=\frac{1}{r_j^{3}}\int_r^R\frac{1}{s^{n+3}}\int_{S_s}\left(\frac{\langle A_{r_j}(y)r_j^{1/2}\nabla v_{r_j}(y),y\rangle r_j}{\sqrt{\mu_{r_j}(y)}}-\frac{3}{2}\sqrt{\mu_{r_j}(y)}v_{r_j}(y)r^{3/2}\right)^2 ds\\
&=\int_r^R\frac{1}{s^{n+3}}\int_{S_s}\left(\frac{\langle A_{r_j}(y)\nabla v_{r_j}(y),y\rangle }{\sqrt{\mu_{r_j}(y)}}-\frac{3}{2}\sqrt{\mu_{r_j}(y)}v_{r_j}(y)\right)^2 ds.
\end{align*}
Since $W_L(v,0+)$ exists, the left-hand side goes to zero as
$j\rightarrow \infty$. Since $A(0)=I, \mu(0)=1$ and we have
$C^{1,\alpha}_\loc(\R^n_\pm\cup\R^{n-1})$ convergence of $v_{r_j}$ to
$v_0$, passing to the limit as $j\to \infty$ in the above, we conclude
that
\[
0 \ge \int_r^R\frac{1}{s^{n+3}}\int_{S_s}\left(\langle\nabla v_0(y),y\rangle -\frac{3}{2} v_0(y)\right)^2ds.
\]
This inequality, and the arbitrariness of $0<r<R$, imply that $v_0$ is
homogeneous of degree $3/2$.
\end{proof}

\begin{dfn}\label{dfn:homog} We call such $v_0$ a \emph{homogeneous blowup}.
\end{dfn}

\section{An epiperimetric inequality for the Signorini problem}\label{S:epi}

In this section we establish in the context of the Signorini problem a
basic generalization  of the epiperimetric inequality obtained by
Weiss for the classical obstacle problem. Our main result, which is
Theorem~\ref{T:epi} below, is tailor made for analyzing regular free
boundary points in the Signorini problem, being the second main tool
we use to reach our goal -- the $C^{1,\beta}$ regularity of the
regular set.

\begin{dfn}\label{D:bae}
 Given $v\in W^{1,2}(B_1)$, we define the \emph{boundary adjusted
   energy} as the Weiss type functional defined in \eqref{W} for the
 Laplacian operator with $r=1$ and zero thin obstacle, i.e.,
\[
W(v)\defeq W_{\Delta}(v,1)=\int_{B_1}|\nabla v|^2-\frac{3}{2}\int_{S_1}v^2.
\]
\end{dfn}

\begin{rmrk}\label{R:weiss}
We observe explicitly that if $\int_{S_1} v^2 \not= 0$, then we can write
\[
W(v) = \left(\int_{S_1} v^2\right)\left[N(v,1) - \frac 32\right].
\]
It follows that if $v$ is a solution to the Signorini for the
Laplacian in $\Rn$, with zero thin obstacle, and which is homogeneous
of degree $\frac 32$, then by \cite{ACS} we have $N(v,r) \equiv \frac
32$, and therefore $W(v) = 0$.
\end{rmrk}

We now consider the function $h(x)=\Re(x_1+i|x_n|)^{\frac{3}{2}}$,
which is a $\frac 32$-homogeneous global solution of the Signorini
problem for the Laplacian with zero thin obstacle, and introduce the
set 
\[
H=\{ a\Re(\langle x',\nu\rangle+i|x_n|)^{\frac{3}{2}}\mid \nu\in S_1, a\ge 0\}
\]
of all multiples and rotations of the function $h$.
The following is the central result of this section.

\begin{theo}[Epiperimetric inequality]\label{T:epi}  There exists
  $\kappa\in (0,1)$ and $\theta\in (0,1)$ such that if $w\in
  W^{1,2}(B_1)$ is a homogeneous function of degree $\frac{3}{2}$ such
  that $w\ge 0$ on $B_1'$ and $\|w-h\|_{W^{1,2}(B_1)}\le \theta$, then
  there exists $\zeta\in W^{1,2}(B_1)$ such that $\zeta=w$ on $S_1$,
  $\zeta\ge 0$ on $B_1'$ and 
\[
W(\zeta)\le (1-\kappa)W(w).
\]
\end{theo}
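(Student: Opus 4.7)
My plan is to use a compactness--contradiction argument in the spirit of Weiss. The first observation is that $h\in H$ is itself a $3/2$-homogeneous global Signorini solution for $\Delta$ with zero thin obstacle, so by Remark~\ref{R:weiss} we have $W(h)=0$. Moreover, for any $3/2$-homogeneous perturbation $\epsilon$, integration by parts (using $\Delta h=0$ off the thin set, the complementarity conditions for $h$ on $B_1'$, and $\partial_\nu h=(3/2)h$ on $S_1$ by homogeneity) shows that the cross term in the bilinear form vanishes, yielding
\[
W(h+\epsilon)=W(\epsilon).
\]
Writing $w=h+\epsilon$, the problem thus reduces to producing a competitor $\zeta=h+\psi$ with $\psi|_{S_1}=\epsilon|_{S_1}$, $h+\psi\ge 0$ on $B_1'$, and $W(\psi)\le(1-\kappa)W(\epsilon)$. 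When $W(w)\le 0$, the competitor $\zeta:=w$ itself already works since $(1-\kappa)W(w)\ge W(w)$ in this range, so the substance of the theorem lies in the case $W(w)>0$.

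In this case I would proceed by contradiction: suppose there are $3/2$-homogeneous functions $w_k=h+\epsilon_k$ with $\epsilon_k\to 0$ in $W^{1,2}(B_1)$, $w_k\ge 0$ on $B_1'$, $W(w_k)>0$, and such that no admissible $\zeta$ satisfies $W(\zeta)\le(1-1/k)W(w_k)$. Writing $w_k(r\omega)=r^{3/2}\phi_k(\omega)$ for $\omega\in S^{n-1}$, a direct computation using homogeneity gives
\[
W(w_k)=\frac{1}{n+1}\int_{S^{n-1}}|\nabla_\omega\phi_k|^2-\frac{3(2n-1)}{4(n+1)}\int_{S^{n-1}}\phi_k^2,
\]
so $W$ reduces to a quadratic form on the sphere. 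The kernel of this form, among functions compatible with the Signorini sign structure of $h$, is spanned precisely by the spherical profiles of the $h_\nu\in H$, an $(n-1)$-parameter family of rotations of $h$. After renormalizing $\tilde\epsilon_k:=\epsilon_k/\sqrt{W(\epsilon_k)}$, a subsequence converges weakly in $W^{1,2}(B_1)$ to a $3/2$-homogeneous limit $\tilde\epsilon_\infty$; the constraint $w_k\ge 0$ on $B_1'$ passes to the linearized constraint $\tilde\epsilon_\infty\ge 0$ on the coincidence set $\{h=0\}\cap B_1'=\{x_1\le 0,\,x_n=0\}\cap B_1$ of the reference profile.

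To close the contradiction I would construct the competitor in one of two equivalent ways: as the unique minimizer of $W$ over $\{\zeta\in W^{1,2}(B_1):\zeta|_{S_1}=w_k|_{S_1},\,\zeta\ge 0\text{ on }B_1'\}$ (which exists by convexity and solves the Signorini problem with the prescribed boundary datum), or explicitly by replacing the $3/2$-homogeneous radial profile of the component of $\phi_k$ orthogonal to the kernel with a profile $r^\alpha$ for a suitable $\alpha>3/2$. Either way, a direct computation shows that the component of $\phi_k$ orthogonal to the kernel contributes strictly less to the energy by a universal factor, while the kernel component contributes $0$ to $W$ both before and after. The main obstacle is the unilateral Signorini constraint: the reference $h$ vanishes on a half-plane of $B_1'$, so admissible perturbations form only a convex cone rather than a vector subspace, and one must verify that the \emph{bad} (orthogonal-to-kernel) spherical modes cannot be entirely absorbed by this one-sided constraint. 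This is precisely where the closeness hypothesis $\|w-h\|_{W^{1,2}}\le\theta$ enters: for $\theta$ small enough the coincidence set of $w$ is a small perturbation of that of $h$, so the cone of admissible competitors contains enough radial re-profiling to produce a universal improvement factor $\kappa$, yielding the desired contradiction.
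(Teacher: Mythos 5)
Your overall architecture (contradiction, compactness, linearization at $h$, identification of the kernel of the linearized energy with the tangent space to $H$) matches the spirit of the paper's proof, but there is a concrete error at the very first step that propagates through the whole argument. The identity $W(h+\epsilon)=W(\epsilon)$ is false: the first variation of $W$ at $h$ is $\delta W(h)(\phi)=-4\int_{B_1'}\phi\,\partial_n^+h$, which does \emph{not} vanish for general $3/2$-homogeneous $\phi$ — it vanishes only when $\phi=0$ on the coincidence set $\Lambda=\{x_1\le 0,\ x_n=0\}$ of $h$, where $\partial_n^+h$ is supported. Since $\epsilon=w-h=w\ge 0$ on $\Lambda$ and $\partial_n^+h\le 0$ there, one actually has $W(w)=W(\epsilon)-4\int_{B_1'}w\,\partial_n^+h\ge W(\epsilon)$ with a genuinely nonzero linear term. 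This term is not a nuisance to be integrated away; it is a central object of the proof: one must show $\int_{B_1'}(w_m/\theta_m^2)\,|\partial_n^+h|\le C$ (by testing the assumed failure of the inequality against the competitor $(1-\eta)w_m+\eta h$), and it is precisely this bound that forces the blowup $\hat w=\lim (w_m-h)/\theta_m$ to \emph{vanish} on $\Lambda$ — a much stronger statement than the one-sided constraint $\hat w\ge 0$ you extract, and one that is needed to apply any classification of the kernel. Relatedly, your normalization by $\sqrt{W(\epsilon_k)}$ is not well defined, since $W$ is indefinite on $3/2$-homogeneous functions (it vanishes on the tangent directions to $H$ and is negative on low spherical modes); the correct normalization is by $\|w_m-h\|_{W^{1,2}}$ \emph{after} first replacing $h$ by the nearest element of $H$, which is also what later kills the kernel component of the limit.

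The two remaining load-bearing steps are asserted rather than proved. First, the kernel classification: identifying $3/2$-homogeneous functions that are harmonic off $\Lambda$ and vanish on $\Lambda$ with the span of $h$ and $x_jU_0$, $j=2,\dots,n-1$, requires boundary H\"older regularity up to the slit (via a bi-Lipschitz flattening and De Giorgi--Nash--Moser) followed by the De Silva--Savin expansion theorem in slit domains; this is not a routine spectral computation on the sphere because $3/2$ is not an eigenvalue degree for the full sphere. Second, and most seriously, your construction of the improved competitor under the unilateral constraint is exactly the difficulty you name and then do not resolve: $\|w-h\|_{W^{1,2}}\le\theta$ gives essentially no control on the coincidence set of $w$, so the claim that ``the cone of admissible competitors contains enough radial re-profiling'' is unsupported. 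The paper sidesteps explicit competitor construction entirely: it only ever uses the \emph{assumed failure} of the inequality against specific admissible test functions ($(1-\eta)w_m+\eta h$ and $\eta(h+\theta_m\phi)+(1-\eta)w_m$) to extract properties of the blowup, and reaches the contradiction by upgrading the weak convergence $\hat w_m\rightharpoonup 0$ to strong convergence in $W^{1,2}$ (using the homogeneity identity $\int_{B_1}|\nabla\hat w_m|^2=2^{n+1}\int_{B_{1/2}}|\nabla\hat w_m|^2$), contradicting $\|\hat w_m\|_{W^{1,2}}=1$. As written, your proposal would not close.
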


\begin{proof}
We argue by contradiction and assume that the result does not
hold. Then, there exist sequences of real numbers $\kappa_m\rightarrow
0$ and $\theta_m\rightarrow 0$, and functions $w_m\in W^{1,2}(B_1)$,
homogeneous of degree $\frac{3}{2}$, such that $w_m\ge 0$ on $B_1'$
and
\begin{equation}\label{ate0}
\|w_m-h\|_{W^{1,2}(B_1)}\le \theta_m,
\end{equation}
but such that for every  $\zeta\in W^{1,2}(B_1)$ with $\zeta\ge 0$ on
$B_1'$, and for which $\zeta = w_m$ on $S_1$, one has
\begin{equation}\label{ate}
W(\zeta) > (1-\kappa_m)W(w_m).
\end{equation}
With such assumption in place we start by observing that there exists
$g_m=a_m\Re(\langle x',\nu_m\rangle+i |x_n|)^{3/2}\in H$ which
achieves the minimum
distance from $w_m$ to $H$:
\[
\|w_m-g_m\|_{W^{1,2}(B_1)}=\inf_{g\in H}\|w_m-g\|_{W^{1,2}(B_1)}.
\]
Indeed, this follows from the simple fact that the set $H$ is locally compact.
Combining this inequality with \eqref{ate0} we deduce that
$\|g_m-h\|_{W^{1,2}(B_1)}\le 2\theta_m$. As a consequence, we must
have that $\nu_m\rightarrow e_1$ and $a_m\rightarrow 1$. Hence,
\[
\left\|\frac{w_m}{a_m}-\Re(\langle x',\nu_m\rangle+i|x_n|)^{\frac{3}{2}}\right\|_{W^{1,2}(B_1)}\le \frac{\theta_m}{a_m} \to 0.
\]
If we rename $\frac{w_m}{a_m}\rightsquigarrow w_m$ and
$\frac{\theta_m}{a_m}\rightsquigarrow \theta_m$, and rotate
$\R^{n-1}$ to send $\nu_m$ to $e_1$, the renamed functions $w_m$ will
be homogeneous of degree $\frac{3}{2}$, nonnegative on $B_1'$, and
will satisfy
\begin{equation}\label{ate3}
\inf_{g\in H}\|w_m-g\|_{W^{1,2}(B_1)}=\|w_m-h\|_{W^{1,2}(B_1)}\le \theta_m.
\end{equation}
Moreover, \eqref{ate} will still hold for the renamed $w_m$,
because of the scaling property $W(tw)=t^2 W(w)$ and the invariance of
$W(w)$ under rotations in $\R^{n-1}$. 

We note explicitly that \eqref{ate} implies in particular that
$w_m\neq h$ for every $m\in \mathbb N$, as $W(h)=0$ (see Remark
\ref{R:weiss} above).
Thus we may also set
\begin{equation}\label{thetam}
\theta_m = \|w_m - h\|_{W^{1,2}(B_1)}>0
\end{equation}
for the rest of the proof.

We next want to rewrite \eqref{ate} in a slightly different way, using the
properties of function $h$. Given $\phi\in W^{1,2}(B_1)$, consider the
first variation of $W$ at $h$ in the direction of $\phi$
\begin{equation}\label{apples}
\delta W(h)(\phi) \defeq \int_{B_1}2\langle \nabla h,\nabla
\phi\rangle -\frac{3}{2}\int_{S_1}2h\phi,
\end{equation}
where the boundary integrals in \eqref{apples} and thereafter must be
interpreted in the sense of traces.
To compute $\delta W(h)(\phi)$ we write the first integral in the
right-hand side of \eqref{apples} as $\int_{B_1}2\langle \nabla
h,\nabla \phi\rangle =\int_{B_1^+}2\langle \nabla h,\nabla \phi\rangle
+\int_{B_1^-}2\langle \nabla h,\nabla \phi\rangle$. Now,
\begin{align*}
\int_{B_1^+}2\langle \nabla h,\nabla \phi\rangle &=-\int_{B_1^+}2(\Delta h)\phi +\int_{S_1^+}2\phi\langle \nabla h,\nu\rangle +\int_{B_1'}2\phi\langle \nabla h,\nu_+\rangle\\
&=\int_{S_1^+}2\phi\langle \nabla h,\nu\rangle+\int_{B_1'}2\phi\langle \nabla h,\nu_+\rangle, 
\end{align*}
where we have used the fact that $\Delta h = 0$ in $B_1^\pm$. Since
$h$ is homogeneous of degree $\frac 32$, by Euler's formula we have
$\langle \nabla h,\nu\rangle = \frac 32 h$ in $S_1^{\pm}$. Keeping in
mind that on $B_1'$ we have $\nu_\pm = \mp e_n$,  we find
\[
\int_{B_1^\pm}2\langle \nabla h,\nabla \phi\rangle = \frac 32 \int_{S_1^\pm}2\phi h + \int_{B_1'}2\phi\langle \nabla h,\nu_\pm\rangle = \frac 32 \int_{S_1^\pm}2\phi h \mp \int_{B_1'}2\phi \D_n^\pm h.
\]
Since $h$ is even in $x_n$, so that $\D_n^- h = - \D_n^+ h$ in $B_1'$,
we conclude
\begin{equation}\label{apples0}
\delta W(h)(\phi) = - 4\int_{B_1'}\phi \D_{n}^+ h.
\end{equation}
If now $\zeta\in W^{1,2}(B_1)$ is a function with $\zeta\ge 0$ on $B_1'$ and
such that $\zeta = w_m$ on $S_1$, by plugging in $\phi=\zeta-h$ into
\eqref{apples} and \eqref{apples0}, we obtain that
\begin{align*}
W(\zeta)&= W(\zeta)- W(h)- \delta W(h)(\zeta-h) -4\int_{B_1'} (\zeta-h) \D_{n}^+h 
\\
 &= \int_{B_1}|\nabla (\zeta-h)|^2-\frac{3}{2}\int_{S_1}(\zeta-h)^2-4\int_{B_1'} \zeta \D_n^+ h,
\end{align*}
where we have used that $W(h)=0$ and $h\D_n^+h=0$ on $B_1'$. By using a
similar identity for $W(w_m)$, we can rewrite \eqref{ate} as
\begin{multline}\label{theone}
(1-\kappa_m)\left[\int_{B_1}|\nabla (w_m-h)|^2-\frac{3}{2}\int_{S_1}(w_m-h)^2-4\int_{B_1'}w_m \D_{n}^+ h\right]
\\
<\int_{B_1}|\nabla (\zeta-h)|^2-\frac{3}{2}\int_{S_1}(\zeta-h)^2-4\int_{B_1'}\zeta \D_{n}^+h.
\end{multline}
Inequality \eqref{theone} will play a key role in the completion of
the proof and will be used repeatedly.

Let us introduce the normalized functions 
$$
\hat{w}_m=\frac{w_m-h}{\theta_m}.
$$ 
By \eqref{thetam} we have 
$$
\|\hat{w}_m\|_{W^{1,2}(B_1)}=1\quad\text{for every }m\in \mathbb N.
$$
By the weak compactness of the unit sphere in $W^{1,2}(B_1)$ we may
assume that 
$$
\hat{w}_m \to \hat{w}\quad\text{weakly in }W^{1,2}(B_1).
$$
Besides,
by the compactness of the Sobolev embedding and traces operator from
$W^{1,2}(B_1)$ into $L^2(B_1)$, $L^2(B_1')$, $L^2(S_1)$ (see e.g.\
Theorem 6.3 in \cite{Ne}), we may assume
$$
\hat{w}_m\to\hat{w}\quad\text{strongly in $L^2(B_1)$, $L^2(B_1')$, and $L^2(S_1)$}. 
$$
We then make the following

\begin{clm}\mbox{}
\begin{itemize}
\item[(i)] $\hat{w}\equiv 0$;
\item[(ii)] $\hat{w}_m\rightarrow 0$ strongly in $W^{1,2}(B_1)$.
\end{itemize}
\end{clm}

Note that (ii) will give us a contradiction since, by construction
$\|\hat{w}_m\|_{W^{1,2}(B_1)}=1$. Hence, the theorem will follow once
we prove the claim.

In what follows we will denote $\Lambda=\Lambda(h)$, the coincidence
set of $h$. 

\medskip
\noindent 
\emph{Step 1.} We start by showing that there is a constant $C>0$ such that 
\begin{equation}\label{sh1}
\Big\|\frac{w_m}{\theta_m^2} \D_n^+ h\Big\|_{L^1(B_1')} \le
C,\quad\text{for every}\ m\in \mathbb N.
\end{equation}
To this end, we pick a function $\eta\in W^{1,\infty}_0(B_1)$
such that $0<\eta\le 1$, and define $\zeta=(1-\eta)w_m+\eta h$. Then,
$\zeta=w_m$ on $S_1$ and $\zeta\ge 0$ on $B_1'$. Furthermore, $\zeta-h =
(1-\eta)(w_m-h)$. We can thus apply \eqref{theone} to such a $\zeta$,
obtaining 
\begin{multline*}
(1-\kappa_m)\left(\int_{B_1}|\nabla (w_m-h)|^2-\frac{3}{2}\int_{S_1}(w_m-h)^2-4\int_{B_1'}w_m \D_{n}^+ h\right)\\
< \int_{B_1}|\nabla ((1-\eta)(w_m-h))|^2-\frac{3}{2}\int_{S_1}(1-\eta)^2(h-w_m)^2-4\int_{B_1'}((1-\eta)w_m+\eta h) \D_{n}^+h\\
= \int_{B_1}\left[(1-\eta)^2|\nabla (w_m-h)|^2+|\nabla \eta|^2(w_m-h)^2-2(1-\eta)(w_m-h)\langle\nabla \eta,\nabla (w_m-h)\rangle\right]\\
-\frac{3}{2}\int_{S_1}(1-\eta)^2(h-w_m)^2-4\int_{B_1'}(1-\eta)w_m \D_{n}^+h.
\end{multline*}
Dividing by $\theta_m^2$, rearranging terms and using the fact that $\|\hat{w}_m\|_{W^{1,2}(B_1)} = 1$ and that $\D_n^+ h\le 0$ on $\Lambda$, we obtain
\begin{multline*}
4 \int_{B_1'} (\eta - \kappa_m) \frac{w_m}{\theta_m^2} |\D_{n}^+h|\le -(1-\kappa_m)\left(\int_{B_1}|\nabla \hat{w}_m|^2-\frac{3}{2}\int_{S_1}\hat{w}_m^2\right)\\
\qquad\qquad\mbox{}+\int_{B_1}\left[(1-\eta)^2|\nabla \hat{w}_m|^2+|\nabla
  \eta|^2\hat{w}_m^2-2(1-\eta)\hat{w}_m\langle\nabla \eta,\nabla \hat{w}_m\rangle\right]
  -\frac{3}{2}\int_{S_1}(1-\eta)^2\hat{w}_m^2\le C,
\end{multline*}
where $C>0$ is independent of $m\in \mathbb N$. At this point we choose $\eta(x)=\tilde{\eta}(|x|)$, and let  
\[
0 < \ve \defeq \int_0^1 \tilde \eta(r) r^n dr.
\]
Since $\kappa_m \to 0$ as $m\to \infty$, possibly passing to a
subsequence we can assume that $\kappa_m \le \frac \ve2(n+1)$ for
every $m\in \mathbb N$. With such choice we have
\[
\int_0^1(\tilde{\eta}(r)-\kappa_m)r^n dr \ge  \frac{\ve}{2},\quad m\in \mathbb N.
\]
Using the fact that $w_m$ and $h$ are homogeneous of degree $\frac
32$, we thus obtain
\begin{align*}
C\ge 4\int_{B_1'}(\eta-\kappa_m)\frac{w_m}{\theta_m^2} |\D_{n}^+ h| &= 4\left(\int_0^1(\tilde{\eta}(r)-\kappa_m)r^n dr\right)\int_{S_1'}\frac{w_m}{\theta_m^2}|\D_n^+ h| \ge 2\ve \int_{S_1'}\frac{w_m}{\theta_m^2}|\D_n^+ h|,
\end{align*}
which, again by the homogeneity of $w_m$ and $h$, and the fact that
$w_m\ge 0$ on $B_1'$, proves \eqref{sh1}.

\medskip\noindent
\emph{Step 2.} We next show that
\begin{equation}\label{deltaw}
\Delta \hat w = 0,\quad \text{in}\  B_1\setminus \Lambda.
\end{equation}
To establish \eqref{deltaw} it will suffice to show that for any ball $B$, such that its concentric double $2B\Subset B_1\setminus \Lambda$, and for any function $\phi\in W^{1,2}(B)$ such that $\phi - \hat{w} \in W^{1,2}_0(B)$, one has
\[
\int_B |\nabla \hat{w}|^2 \le \int_B |\nabla \phi|^2.
\]
To begin, we fix a function $\phi \in L^{\infty}(B_1)\cap W^{1,2}(B)$, and we consider 
\[
\zeta=\eta(h+\theta_m\phi)+(1-\eta)w_m, 
\]
where $\eta\in C^{\infty}_0(B_1\setminus \Lambda)$, $0\le\eta\le
1$. Notice that on $S_1$, $\zeta=w_m$, and, since $\phi\in
L^{\infty}(B_1)$ and $\eta\in C^{\infty}_0(B_1\setminus \Lambda)$, for
$m$ large enough we have $\zeta\ge 0$ on $B_1'$. For such sufficiently
large $m$'s, we can thus use the function $\zeta$ in \eqref{theone},
obtaining
\begin{multline*}
(1-\kappa_m)\left(\int_{B_1}|\nabla (w_m-h)|^2-\frac{3}{2}\int_{S_1}(w_m-h)^2 - 4\int_{B_1'}w_m \D_{n}^+ h\right) \\
<\int_{B_1}|\nabla ((1-\eta)(w_m-h)+\eta\theta_m\phi)|^2-\frac{3}{2}\int_{S_1}((1-\eta)(w_m-h)+\eta\theta_m\phi)^2\\
\mbox{}-4\int_{B_1'}(\eta(h+\theta_m\phi)+(1-\eta)w_m) \D_{n}^+h.
\end{multline*}
Dividing by $\theta_m^2$ and recalling that $h \D_{n}^+ h=0$ in
$B_1'$, we obtain
\begin{multline*}
(1-\kappa_m)\left(\int_{B_1}|\nabla \hat{w}_m|^2-\frac{3}{2}\int_{S_1}\hat{w}_m^2-4\int_{B_1'}\frac{w_m}{\theta_m^2} \D_{n}^+ h\right)\\
\shoveleft <\int_{B_1}\left[|\nabla (\eta\phi)|^2+|\nabla ((1-\eta)\hat{w}_m)|^2+2\langle\nabla(\eta\phi),\nabla((1-\eta)\hat{w}_m)\rangle\right]\\
\mbox{}-4\int_{B_1'}\left[\frac{\eta\phi}{\theta_m} \D_{n}^+ h+(1-\eta)\frac{w_m}{\theta_m^2} \D_{n}^+h\right] -\frac{3}{2}\int_{S_1}((1-\eta)\hat{w}_m+\eta\phi)^2\\
\shoveleft =\int_{B_1}\left[|\nabla (\eta\phi)|^2+|\nabla
  ((1-\eta)\hat{w}_m)|^2+2\langle\nabla(\eta\phi),\nabla((1-\eta)\hat{w}_m)\rangle\right]
-4\int_{B_1'}\frac{w_m}{\theta_m^2} \D_{n}^+ h -\frac{3}{2}\int_{S_1}\hat{w}_m^2,
\end{multline*}
since $\eta\in C^{\infty}_0(B_1\setminus \Lambda)$. Hence
\begin{align*}
\int_{B_1}|\nabla \hat{w}_m|^2&< \kappa_m\int_{B_1}|\nabla \hat{w}_m|^2+\frac{3}{2}(1-\kappa_m)\int_{S_1}\hat{w}_m^2+4(1-\kappa_m)\int_{B_1'}\frac{w_m}{\theta_m^2} \D_{n}^+h\\
&\qquad\mbox{}+\int_{B_1}\left[|\nabla (\eta\phi)|^2+|\nabla((1-\eta)\hat{w}_m)|^2+2\langle\nabla(\eta\phi),\nabla((1-\eta)\hat{w}_m)\rangle\right] \\
&\qquad\mbox{}-4\int_{B_1'}\frac{w_m}{\theta_m^2} \D_{n}^+h-\frac{3}{2}\int_{S_1}\hat{w}_m^2\\
&=\kappa_m\int_{B_1}|\nabla \hat{w}_m|^2-\frac{3}{2}\kappa_m\int_{S_1}\hat{w}_m^2-4\int_{B_1'}\kappa_m\frac{w_m}{\theta_m^2} \D_{n}^+h\\
&\qquad\mbox{}+\int_{B_1}\left[|\nabla (\eta\phi)|^2+|\nabla((1-\eta)\hat{w}_m)|^2+2\langle\nabla(\eta\phi),\nabla((1-\eta)\hat{w}_m)\rangle\right]\\
&\le C\kappa_m+\int_{B_1}\left[|\nabla (\eta\phi)|^2+|\nabla((1-\eta)\hat{w}_m)|^2+2\langle\nabla(\eta\phi),\nabla((1-\eta)\hat{w}_m)\rangle\right].
\end{align*}
Therefore

\begin{align*}
\int_{B_1}(1-(1-\eta)^2)|\nabla \hat{w}_m|^2&\le C\kappa_m+\int_{B_1}\Big[|\nabla (\eta\phi)|^2+\hat{w}_m^2|\nabla\eta|^2\\&\qquad\mbox{}-2(1-\eta)\hat{w}_m\langle\nabla\eta,\nabla \hat{w}_m\rangle+2\langle\nabla(\eta\phi),\nabla((1-\eta)\hat{w}_m)\rangle\Big].
\end{align*}
Passing to the limit $m\rightarrow\infty$ we obtain
\begin{equation}\label{mmm}\begin{aligned}
\int_{B_1}(1-(1-\eta)^2)|\nabla \hat{w}|^2&\le \int_{B_1}\Big[|\nabla
(\eta\phi)|^2+\hat{w}^2|\nabla\eta|^2\\
&\qquad\mbox{}-2(1-\eta)\hat{w}\langle\nabla\eta,\nabla \hat{w}\rangle+2\langle\nabla(\eta\phi),\nabla((1-\eta)\hat{w})\rangle\Big].
\end{aligned}
\end{equation}
Notice that 
\begin{align*}
\int_{B_1}|\nabla(\eta\phi+(1-\eta)\hat{w})|^2&=\int_{B_1}\left[|\nabla(\eta\phi)|^2+|\nabla((1-\eta)\hat{w})|^2+2\langle \nabla(\eta\phi),\nabla((1-\eta)\hat{w}\rangle\right]\\
&=\int_{B_1}\Big[|\nabla(\eta\phi)|^2+\hat{w}^2|\nabla\eta|^2+(1-\eta)^2|\nabla \hat{w}|^2-2\hat{w}(1-\eta)\langle \nabla \hat{w},\nabla \eta\rangle\\
&\qquad\mbox{}+2\langle \nabla(\eta\phi),\nabla((1-\eta)\hat{w})\rangle\Big],
\end{align*}
hence \eqref{mmm} gives us,
\[
\int_{B_1}|\nabla \hat{w}|^2\le \int_{B_1}|\nabla(\eta\phi+(1-\eta)\hat{w})|^2.
\]
By approximation we can drop the condition $\phi\in L^{\infty}(B_1)$
and by considering open balls $B\Subset B_1\setminus \Lambda$ we may
choose $\eta=1$ in $B$ and $\phi=\hat{w}$ outside $B$. This will give
\[
\int_{B_1}|\nabla \hat{w}|^2\le \int_B|\nabla \phi|^2+\int_{B_1\setminus B}|\nabla \hat{w}|^2,
\]
hence
\[
\int_B|\nabla \hat{w}|^2\le \int_B|\nabla\phi|^2,
\]
which proves the harmonicity of $\hat{w}$ in $B$.

\medskip
\noindent 
\emph{Step 3.} We next want to prove that
\begin{equation}\label{step3}
\hat w= 0\quad \mathcal{H}^{n-1}\text{-a.e.\ in}\  \Lambda.
\end{equation}
We note that for the function $h$ we have $\D_n^+ h(x',0)\not=0$ for
every $(x',0)\in \Lambda^\circ$ (interior of $\Lambda$ in
$\R^{n-1}$). Therefore, given $\omega \Subset \Lambda^\circ$, there
exists a constant $C_\omega>0$ such that $|\D_n^+ h(x',0)|\ge
C_\omega$ for every $(x',0)\in \omega$.
At points $(x',0)\in \Lambda^\circ$, we can thus write
\[
\hat{w}_m = \frac{w_m - h}{\theta_m} = \frac{w_m}{\theta_m^2} \D_n^+ h \frac{\theta_m}{\D_n^+ h}.
\]
This gives
\[
\int_\omega |\hat{w}_m| \le \frac{\theta_m}{C_\omega} \int_\omega \frac{w_m}{\theta_m^2} |\D_n^+ h|\le \frac{C\theta_m}{C_\omega},
\]
where in the last inequality we have used \eqref{sh1} in Step 1
above. Since $\theta_m \to 0$, we conclude that
$\|\hat{w}_m\|_{L^1(\omega)} \to 0$ as $m\to \infty$. By the
arbitrariness of $\omega\Subset \Lambda^\circ$ we infer that, in
particular, we must have
\begin{equation}\label{step32}
\hat{w}_m(x',0)\to 0,\quad\mathcal H^{n-1}\text{-a.e.}\ (x',0)\in \Lambda,
\end{equation}
which proves \eqref{step3}.

\medskip\noindent
\emph{Step 4 (Proof of (i)).} We next show that
\begin{equation}\label{weakly}
\hat{w}_m \to 0,\quad\text{weakly in}\ W^{1,2}(B_1),
\end{equation}
or, equivalently, $\hat{w}=0$. We begin by observing that, since the $\hat{w}_m$'s are homogeneous of degree 
$\frac{3}{2}$, 
their weak limit $\hat{w}$ is also homogeneous of degree
$\frac{3}{2}$. Combining this observation with Steps 2 and 3 above, we
then have the following properties for $\hat{w}$:
\begin{itemize}
\item[(i)] $\Delta \hat{w}=0$ in $B_1\setminus\Lambda$;
\item[(ii)] $\hat{w}=0$ $\mathcal{H}^{n-1}$-a.e.\ on $\Lambda$;
\item[(iii)] $\hat{w}$ is homogeneous of degree $\frac{3}{2}$.
\end{itemize}
We next obtain an explicit representation for $\hat{w}$. First, we
note that $\hat{w}$ is H\"older continuous up to the coincidence set
$\Lambda$ of $h$. Indeed, this can be seen
by making a bi-Lipschitz transformation $T:B_1\setminus \Lambda\to
B_1^+$ as in (b) on p. 501 of \cite{AC1}. The function $\tilde
w=\hat{w}\circ T^{-1}\in W^{1,2}(B_1^+)$ solves a uniformly elliptic
equation in divergence form
$$
\div (b(x)\nabla \tilde w)=0\quad\text{in }B_1^+
$$
with bounded measurable coefficients $b(x)=[b_{ij}(x)]$. We will also have
$$
\tilde w=0\quad\text{on }B_1'
$$
in the sense of traces. Then, by the boundary version of De
Giorgi-Nash-Moser regularity theorem (see, e.g., Theorem 8.29 in
\cite{GT})
we have that $\tilde w$ is $C^\gamma$ up to $B_1'$ for some
$\gamma>0$. Since $\hat{w}(x) = \tilde w(T(x))$, this implies that
$\hat{w}\in C^\gamma(B_1)$.

Once we know that $\hat{w}\in C^\gamma(B_1)$, together with (i)--(iii) above,
we can apply a theorem of De~Silva and Savin on an expansion of
harmonic functions in
slit domains, see Theorem~3.3 in \cite{DS} (and also Theorem~4.5 in the same paper) which implies that there are constants $a_1$, $a_2$, \ldots, $a_{n-1}$, $b$, and $c$ 
such that, for some $\alpha\in (0,1)$,
$$
|\hat{w}(x)-P_0(x)U_0(x)-c x_n|=O(|x|^{3/2+\alpha}),
$$
 where
\[
U_0(x)=\frac{1}{\sqrt{2}}\sqrt{x_{1}+\sqrt{x_{1}^2+x_{n}^2}}=\Re(x_{1}+i|x_n|)^{1/2},
\]
and
\[
P_0(x)=\sum_{k=1}^{n-1}a_k x_{k}+b\sqrt{x_{1}^2+x_n^2}.
\]
Since $\hat{w}$ is $3/2$-homogeneous, we must have $c=0$ and thus
$$
\hat{w}(x)=P_0(x)U_0(x).
$$
Now, a direct computation shows that such $\hat{w}$ will be harmonic in
$B_1\setminus\Lambda$ only if 
$$
a_{1}+2b=0,
$$
which implies the representation
\begin{align*}
\hat{w}(x)&=\frac{a_1}{2}\Re(x_{1}+i |x_n|)^{3/2}+\sum_{j=2}^{n-1}
a_{j}x_j\Re(x_{1}+i |x_n|)^{1/2},\\
&=\frac{a_1}{2} h(x)+\sum_{j=2}^{n-1} a_j x_j U_0(x).
\end{align*}
We next show that all constants $a_j=0$, $j=1,\ldots,n-1$. To simplify
the notation we will write $\|\cdot\|=\|\cdot\|_{W^{1,2}(B_1)}$. We
then use the
fact that 
\[
\|w_m-g\|^2\ge \|w_m-h\|^2\quad\text{for all }\ g\in H.
\]
Recalling that $\hat{w}_m=\frac{w_m-h}{\theta_m}$, we can write this as
\[
\|\theta_m\hat{w}_m+h-g\|^2\ge \|\theta_m \hat{w}_m\|^2,
\]
or
\[
2\theta_m\langle \hat{w}_m,h-g\rangle +\|h-g\|^2\ge 0.
\]
Therefore,
\begin{equation}\label{inequality}
\langle \hat{w}_m,g-h\rangle \le \frac{\|h-g\|^2}{2\theta_m}.
\end{equation}
Applying this to $g=(1+\theta_m^2)h$, we obtain
\[
\langle \hat{w}_m, h\rangle \le \frac{\theta_m}{2}\|h\|^2.
\]
Letting $m\rightarrow \infty$ we arrive at
\[
\langle \hat{w},h\rangle = \frac{a_1}{2}\|h\|^2\le 0.
\]
This implies that $a_{1}\leq 0$. Using the same argument for
$g=(1-\theta_m^2)h$ allows us to conclude
that also $- a_{1}\le 0$, and therefore $a_{1}=0$. Further,
rewriting \eqref{inequality} as
$$
\left\langle \hat{w}_m, \frac{g-h}{\theta_m^2} \right\rangle\leq
\frac{\theta_m}2\left\| \frac{g-h}{\theta_m^2}\right\|^2,
$$
and taking for $j=2,\ldots, n-1$
$$
g=\Re(x_{1}\cos(\theta_m^2)+\sin(\theta_m^2)x_j+i|x_n|)^{3/2},
$$
in such inequality, by letting $m\to \infty$ we obtain that
$$
\frac32\langle \hat{w}, x_j U_0\rangle=\frac{3}{2}a_j \|x_j U_0\|^2\leq 0.
$$
(We note here that $\langle x_i U_0, x_j U_0\rangle=0$ for $i,
j=2,\ldots, n-1$, $i\neq j$ and that 

$$
\frac{\Re(x_{1}\cos(\theta)+\sin(\theta)x_j+i|x_n|)^{3/2}-\Re(x_{1}+i|x_n|)^{3/2}}{\theta}\to
\frac32 x_jU_0(x) 
$$
as $\theta\to 0$, strongly in $W^{1,2}(B_1)$.)
Hence $a_j\leq 0$. Replacing $x_j$ with $-x_j$ in the above
argument, we also obtain $-a_j\leq 0$. Thus, $a_j=0$ for
all $j=1,\ldots, n-1$, which implies $\hat{w}=0$ and completes the
proof of \eqref{weakly}.

\medskip\noindent
\emph{Step 5 (Proof of (ii)):} Finally, we claim that, on a subsequence, 
\begin{equation}\label{step5}
\hat{w}_m\rightarrow 0\quad\text{strongly in} \ W^{1,2}(B_1).
\end{equation}
Since we already have the strong convergence $\hat{w}_m\to \hat{w}=0$
in $L^2(B_1)$, we are left with proving 
\begin{equation}\label{step52}
\nabla \hat{w}_m\rightarrow 0\quad\text{strongly in}\  L^2(B_1).
\end{equation}
To this end, we pick $\eta\in C^{0,1}_0(B_1)$, $0\le \eta\le 1$,  and
consider $\zeta=(1-\eta)w_m+\eta h$. Clearly, $\zeta=w_m$ on $S_1$,
$\zeta\ge 0$ on $B_1'$, and $\zeta- h = (1-\eta)(w_m-h)$. Applying
\eqref{theone} with this choice of $\zeta$ we obtain
\begin{multline*}
 (1-\kappa_m)\left[\int_{B_1}|\nabla (w_m-h)|^2-\frac{3}{2}\int_{S_1}(w_m-h)^2-4\int_{B_1'}w_m \D_{n}^+ h\right]\\
 < \int_{B_1}|(1-\eta)\nabla (w_m-h)-\nabla \eta (w_m-h)|^2
 \\-\frac{3}{2}\int_{S_1}(1-\eta)^2(w_m-h)^2
 \mbox{}- 4 \int_{B_1'}(1-\eta)w_m \D_{n}^+h.
\end{multline*}
Dividing by $\theta_m^2$, and recalling that
$\hat{w}_m=\frac{w_m-h}{\theta_m}$, we obtain
\begin{multline*}
(1-\kappa_m)\left(\int_{B_1}|\nabla \hat{w}_m|^2-\frac{3}{2}\int_{S_1}\hat{w}_m^2-4\int_{B_1'}\frac{w_m}{\theta_m^2} \D_{n}^+h\right)
\\
 < \int_{B_1}\left[(1-\eta)^2|\nabla \hat{w}_m|^2+\hat{w}_m^2|\nabla \eta|^2-2(1-\eta)\hat{w}_m\langle \nabla \hat{w}_m,\nabla \eta\rangle\right]\\ -\frac{3}{2}\int_{S_1}(1-\eta)^2\hat{w}_m^2- 4 \int_{B_1'}(1-\eta)\frac{w_m}{\theta_m^2} \D_{n}^+h.
\end{multline*}
This gives
\begin{multline*}
\int_{B_1}|\nabla \hat{w}_m|^2-4\int_{B_1'}\frac{w_m}{\theta_m^2} \D_{n}^+h\\
\le \int_{B_1}\left[(1-\eta)^2|\nabla \hat{w}_m|^2+|\nabla \eta|^2\hat{w}_m^2-2(1-\eta)\hat{w}_m\langle \nabla \hat{w}_m,\nabla \eta\rangle\right]\\
 - \frac{3}{2}\int_{S_1}(1-\eta)^2\hat{w}_m^2-4\int_{B_1'}(1-\eta)\frac{w_m}{\theta_m^2} \D_{n}^+h+(1-\kappa_m)\frac{3}{2}\int_{S_1}\hat{w}_m^2\\
+ \kappa_m\left(\int_{B_1}|\nabla \hat{w}_m|^2-4\int_{B_1'}\frac{w_m}{\theta_m^2}\D_{n}^+h\right)
\\
=\int_{B_1}\left[(1-\eta)^2|\nabla \hat{w}_m|^2+|\nabla
\eta|^2\hat{w}_m^2-2(1-\eta)\hat{w}_m\langle \nabla \hat{w}_m,\nabla \eta\rangle\right]\\
 +\kappa_m\left(\int_{B_1}|\nabla
  \hat{w}_m|^2-4\int_{B_1'}\frac{w_m}{\theta_m^2} \D_{n}^+h-\frac{3}{2}\int_{S_1}\hat{w}_m^2\right)
    \\
 + \frac{3}{2}\left(1-(1-\eta)^2\right)\int_{S_1}\hat{w}_m^2- 4 \int_{B_1'}(1-\eta)\frac{w_m}{\theta_m^2} \D_{n}^+h.
\end{multline*}
If in this inequality we use the fact that $\|\nabla
\hat{w}_m\|_{L^2(B_1)}\le \|\hat{w}_m\|_{W^{1,2}(B_1)} = 1$, and that
$\frac{w_m}{\theta_m^2}\D_{n}^+h$ is uniformly bounded in $L^1(B_1')$,
a fact which we have proved in \eqref{sh1} of Step 1, we obtain
\begin{multline}\label{step53}
\int_{B_1}|\nabla \hat{w}_m|^2-4\int_{B_1'}\frac{w_m}{\theta_m^2} \D_{n}^+h
\\
\le \int_{B_1}(1-\eta)^2|\nabla \hat{w}_m|^2+|\nabla \eta|^2\hat{w}_m^2-2(1-\eta)\hat{w}_m\langle \nabla \hat{w}_m,\nabla \eta\rangle
\\
 + C\kappa_m  +\frac{3}{2}\int_{S_1}\hat{w}_m^2 -4\int_{B_1'}(1-\eta)\frac{w_m}{\theta_m^2} \D_{n}^+h.
\end{multline}
We now make the choice in \eqref{step53} of
\[
\eta(x)=\begin{cases} 
1,& \text{if } |x|\le \frac{1}{2},
\\
2(1-|x|),& \text{if }\frac{1}{2}< |x| < 1,
\\
0,& \text{if } |x|\ge 1,
\end{cases}
\]
we obtain
\begin{align*}
\int_{B_{\frac{1}{2}}}|\nabla \hat{w}_m|^2& \le \int_{B_1}\left[|\nabla \eta|^2\hat{w}_m^2-2(1-\eta)\hat{w}_m\langle\nabla \hat{w}_m,\nabla \eta\rangle\right] +\frac{3}{2}\int_{S_1}\hat{w}_m^2
+ C\kappa_m +4\int_{B_1'}\eta\frac{w_m}{\theta_m^2} \D_{n}^+h\\
&\le \int_{B_1}\left[|\nabla \eta|^2\hat{w}_m^2-2(1-\eta)\hat{w}_m\langle\nabla \hat{w}_m,\nabla \eta\rangle\right] +\frac{3}{2}\int_{S_1}\hat{w}_m^2
+ C\kappa_m,
\end{align*}
since $\eta, w_m\ge 0$ and $\D_{n}^+h \le 0$. We thus conclude that
\begin{equation}\label{eta}
\int_{B_{\frac{1}{2}}}|\nabla \hat{w}_m|^2 \le \int_{B_1}\left[|\nabla \eta|^2\hat{w}_m^2-2(1-\eta)\hat{w}_m\langle \nabla \hat{w}_m,\nabla \eta\rangle\right] +\frac{3}{2}\int_{S_1}\hat{w}_m^2
+ C\kappa_m.
\end{equation}
We now observe that, since $\hat{w}_m$ is homogeneous of degree $3/2$,
and thus $\nabla \hat{w}_m$ is homogeneous of degree $1/2$, we have
\[
\int_{B_{1}}|\nabla \hat{w}_m|^2= 2^{n+1} \int_{B_{\frac{1}{2}}}|\nabla \hat{w}_m|^2.
\]
Using this identity in \eqref{eta} we conclude that
\[
\int_{B_1}|\nabla \hat{w}_m|^2 \le 2^{n+1}\left(\int_{B_1}\left[|\nabla \eta|^2\hat{w}_m^2-2(1-\eta)\hat{w}_m\langle \nabla \hat{w}_m,\nabla \eta\rangle\right] +\frac{3}{2}\int_{S_1}\hat{w}_m^2 +C\kappa_m\right).
\]
To complete the proof of \eqref{step5}, and consequently of Theorem
\ref{T:epi}, all we need to do at this point is to observe that, on a
subsequence, the right-hand side of the latter inequality converges to
$0$ as $m\to \infty$. This follows from the facts that $\kappa_m\to 0$,
$\|\hat{w}_m\|_{L^2(B_1)}\to 0$, $\|\hat{w}_m\|_{L^2(S_1)}\to 0$, and $\|\nabla \hat{w}_m\|_{L^2(B_1)} \le
1$. 

This completes the proof of the claim and that of the theorem.
\end{proof}

\section{$C^{1,\beta}$ regularity of the regular part of the free boundary}\label{S:final}

In this final section we combine Theorems \ref{T:weiss} and \ref{T:epi} to establish the $C^{1,\beta}$ regularity of the regular part of the free boundary.
We will consider two types of scalings: the Almgren one, defined in \eqref{Almgrentype}, and the homogeneous scalings defined in \eqref{s}, which are suited for the study of regular free boundary points, i.e.,
\begin{equation}\label{hom2}
v_r(x)=\frac{v(rx)}{r^{3/2}}.
\end{equation}
Throughout this section we continue to use the notation
$h(x)=\Re(x_1+i|x_n|)^{\frac{3}{2}}$ adopted in Section
\ref{S:epi}. The symbol $\theta>0$ will be used to exclusively denote
the constant in the epiperimetric inequality of Theorem~\ref{T:epi}
above. 

 In Lemma~\ref{L:Wbounded} above we showed that our Weiss type
 functional $W_L(v,r)$ is bounded,  when $v$ is the solution to the problem \eqref{c11i}--\eqref{c44i}. In the course of the proof of the next
 lemma we establish the much more precise statement that $W_L(v,r) \le
 C r^{\gamma}$, for appropriate constants $C, \gamma>0$. This gain is possible
 because of the assumption, in Lemma~\ref{L:reg} below, that the
 scalings $v_r$ have the epiperimetric property, i.e., the conclusion
 of the epiperimetric inequality holds for their extensions as
 $3/2$-homogeneous functions in $B_1$. 

\begin{lemma}\label{L:reg} Let $v$ be the solution of the thin
  obstacle problem \eqref{c11i}--\eqref{c44i}, and suppose that $0\in
  \Gamma_{3/2}(v)$. Assume the existence of radii $0\le s_0<r_0<1$
  such that for every $s_0\le r\le r_0$, if we extend $v_r\big|_{S_1}$
  as a $3/2$-homogeneous function in $B_1$, call it $w_r$, then there
  exists a function $\zeta_r \in W^{1,2}(B_1)$ such that $\zeta_r \ge
  0$ in $B_1'$, $\zeta_r =v_r$ on $S_1$ and
\[
W(\zeta_r)\le (1-\kappa)W(w_r),
\]
where $\kappa$ is the constant in the epiperimetric inequality. 
Then, there exist universal constants $C, \gamma>0$ such that for every $s_0\le s\le t\le r_0$ one has
\begin{equation}\label{maincon}
\int_{S_1}|v_t-v_s| \le Ct^{\gamma}.
\end{equation}
\end{lemma}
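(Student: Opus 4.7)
The plan is Weiss's classical scheme: combine the monotonicity formula (Theorem~\ref{T:weiss}) with the epiperimetric competitor $\zeta_r$ supplied by the hypothesis to extract geometric decay of $W_L(v,r)$, and then integrate the resulting $L^2$-control of $\partial_r v_r$ to obtain the $L^1(S_1)$-bound on $v_t-v_s$.

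I would first record the rescaling identity $W_L(v,r) = W(v_r) + O(r^{1/2})$, which follows from $A(0)=I$, $\mu(0)=1$, the Lipschitz continuity of $A$, and the pointwise bounds in Lemma~\ref{L:newcrucial} and Corollary~\ref{C:HI}. A direct spherical-coordinate computation, using that $w_r$ is the $\tfrac32$-homogeneous extension of $v_r|_{S_1}$ together with the identity $r\,\partial_r v_r = \partial_\nu v_r - \tfrac{3}{2}v_r$ on $S_1$, then yields the key algebraic identity
\[
\frac{d}{dr}W(v_r) = \frac{n+1}{r}\bigl(W(w_r)-W(v_r)\bigr) + r\int_{S_1}(\partial_r v_r)^2.
\]
Because $v$ minimizes the Dirichlet energy modulo $f$, the scaling $v_r$ is an approximate minimizer of the flat Dirichlet integral on $B_1$ among competitors sharing its trace and nonnegative on $B_1'$; comparing with $\zeta_r$ yields $W(v_r)\le W(\zeta_r)+Cr^{1/2}\le (1-\kappa)W(w_r)+Cr^{1/2}$, whence $W(w_r)-W(v_r)\ge \tfrac{\kappa}{1-\kappa}W(v_r)-Cr^{1/2}$. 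Substituting into the identity above gives the differential inequality
\[
\frac{d}{dr}W(v_r) \ge \frac{\gamma_0}{r}W(v_r) - C r^{-1/2}, \qquad \gamma_0 \defeq \frac{(n+1)\kappa}{1-\kappa}.
\]
Passing to $e(r) \defeq W_L(v,r)+Cr^{1/2}$, which is nonnegative by Corollary~\ref{C:weiss}, the monotonicity of $r\mapsto r^{-\gamma_1} e(r)$ for $\gamma_1 \defeq \min(\gamma_0, \tfrac12-\varepsilon)>0$ yields $e(r)\le Cr^{\gamma_1}$ for $s_0\le r\le r_0$.

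With the decay of $e$ in hand, the same change of variables that produced the identity converts the right-hand side of \eqref{W'} into $2r\int_{S_1}(\partial_r v_r)^2 + O(r^{-1/2})$, so integrating the monotonicity estimate gives
\[
\int_s^t 2r\int_{S_1}(\partial_r v_r)^2\,dr \le e(t) - e(s) + C(t^{1/2}-s^{1/2}) \le C t^{\gamma_1}.
\]
Using $v_t - v_s = \int_s^t \partial_r v_r\,dr$ together with the Cauchy--Schwarz estimate
\[
\int_s^t|\partial_r v_r|\,dr \le \Bigl(\int_s^t r(\partial_r v_r)^2\,dr\Bigr)^{1/2}\bigl(\log(t/s)\bigr)^{1/2},
\]
integrating over $S_1$, and performing a dyadic summation on $[s,t]$ to absorb the logarithmic factor into a slightly smaller exponent, we obtain
\[
\int_{S_1}|v_t - v_s| \le C t^{\gamma_1/2},
\]
which is \eqref{maincon} with $\gamma=\gamma_1/2$.

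The main technical obstacle is the identity for $\tfrac{d}{dr}W(v_r)$: its derivation is a clean spherical-coordinate computation in the constant-coefficient case, but transferring the argument to $W_L(v,r)$ and to the variable-coefficient minimality requires careful bookkeeping of the $O(r^{1/2})$ error terms from $A-I$, $\mu-1$, and $f$ through the epiperimetric step, so that the effective Gronwall exponent $\gamma_1$ remains strictly positive.
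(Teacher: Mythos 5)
Your proposal is correct and follows essentially the same route as the paper's proof: the same differential identity $\frac{d}{dr}W_L(v,r)\ge \frac{n+1}{r}\bigl(W(w_r)-W_L(v,r)\bigr)+\frac1r\int_{S_1}\bigl(\langle\nu,\nabla v_r\rangle-\tfrac32 v_r\bigr)^2-Cr^{-1/2}$, the same comparison of $W_L(v,r)$ with $W(\zeta_r)$ via the minimality of $v$ for the energy \eqref{energy-with-f}, the same Gronwall argument giving $W_L(v,t)\le Ct^{\gamma}$, and the same Cauchy--Schwarz plus dyadic summation. The one point you leave implicit is that the step $W(v_r)\le W(\zeta_r)+Cr^{1/2}$ requires control of $\int_{B_1}|\nabla\zeta_r|^2$ (and of $\sup|\zeta_r|$ for the $f$-term), which the paper secures by replacing $\zeta_r$ with the minimizer of $W(\cdot,1)$ under the same trace and sign constraints --- a replacement that preserves the epiperimetric inequality --- and this is exactly the ``bookkeeping'' you flag at the end.
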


\begin{proof} As before,  we let $L=\text{div}(A\nabla \cdot)$. The
  main idea of the proof of \eqref{maincon} is to relate
  $\int_{S_1}|v_t-v_s|$ with the Weiss type functional $W_L(v,r)$
  defined in \eqref{W} above, and then control the latter in the
  following way:
\begin{equation}\label{WW}
W_L(v,t) \le C t^{\gamma},\quad 0<t<r_0. 
\end{equation}

More specifically, combining equations \eqref{bb} and \eqref{ma}
proved below, we obtain the following
\begin{equation}\label{relat}
\int_{S_1}|v_t-v_s|\le C(n)\int_s^tr^{-1/2}\left( \frac{d}{dr}W_L(v,r)+Cr^{-1/2}\right)^{1/2}dr.
\end{equation}
After using H\"older's inequality in the right-hand side of
\eqref{relat} we obtain
\begin{align*}
\int_{S_1}|v_t-v_s| & \le C\left(\int_s^t r^{-1}dr\right)^{1/2}\left(\int_s^t \frac{d}{dr}W_L(v,r)+Cr^{-1/2} dr\right)^{1/2}\\
& \le C\left(\log \frac{t}{s}\right)^{1/2}\left(W_L(v,t)-W_L(v,s)+C(t^{1/2}-s^{1/2})\right)^{1/2}\\
& \le C\left(\log \frac{t}{s}\right)^{1/2}\left(Ct^{\gamma} +Cs^{1/2}+C(t^{1/2}-s^{1/2})\right)^{1/2}\\
&\le C\left(\log \frac{t}{s}\right)^{1/2}t^{\frac{\gamma}{2}},
\end{align*}
where we have used \eqref{WW}, and we have estimated $-W_L(v,s) \le  Cs^{1/2}$ using Corollary \ref{C:weiss}.
With this estimate in hands we now use a dyadic argument. Assume that $s\in [2^{-\ell},2^{-\ell+1})$, $t\in [2^{-h},2^{-h+1})$ with $h\le \ell$ and apply the estimate above iteratively. We obtain 
\begin{align*}
\int_{S_1}|v_s-v_t| & \le \int_{S_1}|v_s-v_{2^{-\ell+1}}|+ \cdots + \int_{S_1}|v_{2^{-h}}-v_t|\\
&\le C \log^{1/2} \left(\frac{2^{-\ell+1}}{s} \right)\left(2^{-\ell+1}\right)^{\frac{\gamma}{2}} + \cdots + C\log^{1/2}\left(\frac{t}{2^{-h}}\right)t^{\frac{\gamma}{2}}\\
&\le C(\log 2)^{1/2}\left(2^{\frac{\gamma}{2}}\right)^{-\ell+1}+\cdots + C(\log 2)^{1/2}t^{\frac{\gamma}{2}}\\
&\le C(\log 2)^{1/2}\sum_{j=h}^{\ell-1}\left(2^{\frac{\gamma}{2}}\right)^{-j} +C(\log 2)^{1/2}t^{\frac{\gamma}{2}} \\
&\le C(\log 2)^{1/2}\left(2^{\frac{\gamma}{2}}\right)^{-h}+C(\log 2)^{1/2}t^{\frac{\gamma}{2}}\le Ct^{\frac{\gamma}{2}},
\end{align*}
which yields the sought for conclusion \eqref{maincon}.

In order to complete the proof of the lemma we are thus left with
proving \eqref{WW} and \eqref{relat}. Our first step will be to prove
\eqref{WW} since the computations leading to such estimate also give
\eqref{relat}, as we will show below. We will establish \eqref{WW} by
proving (see \eqref{WWW} below) the following estimate 
\[
\frac{d}{dr}W_L(v,r)\ge \frac{n+1}{r}\frac{\kappa}{1-\kappa}W_L(v,r)-Cr^{-1/2},
\]
where $\kappa$ is the constant in the epiperimetric inequality. With
this objective in mind, we recall that
$$
W_L(v,r)=\frac{I_L(v,r)}{r^{n+1}}-\frac{3}{2}\frac{H_L(v,r)}{r^{n+2}},
$$
see \eqref{WW'}. To simplify the notation we write $I=I_L$ and
$H=H_L$. We start by observing that combining Lemma~\ref{L:H'i} with
the observation that $L|x|= \div (A(x)\nabla
r)=\frac{n-1}{|x|}(1+O(|x|))$ (see Lemma 4.1 in
\cite{GS}), we obtain the following estimate for $H'(r)$:
\begin{equation}\label{H'}
H'(r)-\left(\frac{n-1}{r}+ O(1)\right)H(r) = 2 I(r).
\end{equation}
In the computations that follow we will estimate
$\frac{d}{dr}W_L(v,r)$ using formula \eqref{H'}, estimates
\eqref{vbounds},  \eqref{mf4}, as well as the identity
$I(r)=D(r)+\int_{B_r}vf$, which gives $I'(r) = \int_{S_r} \langle
A\nabla v,\nabla v\rangle + \int_{S_r} vf$.
We thus have
\begin{align*}
\frac{d}{dr}W_L(v,r)& \stackrel{\eqref{WW'}}{=}\frac{I'(r)}{r^{n+1}}-\frac{n+1}{r^{n+2}}I(r)-\frac{3}{2r^{n+2}}H'(r)+\frac{3(n+2)}{2r^{n+3}}H(r)\\
&\stackrel{\eqref{H'}}{\ge}\frac{1}{r^{n+1}}\int_{S_r}\langle A\nabla v,\nabla v\rangle -\frac{n+1}{r^{n+2}}D(r)+\frac{3(n+2)}{2r^{n+3}}H(r)\\
&\qquad\mbox{}-\frac{3}{2r^{n+2}}\left(\frac{n-1}{r}H(r)+2\int_{S_r}v\langle
  A\nu,\nabla v\rangle
  +CH(r)\right)\\
&\qquad\mbox{}+\frac{1}{r^{n+1}}\int_{S_r}
vf-\frac{n+1}{r^{n+2}}\int_{B_r} vf\\
&\stackrel{\eqref{vbounds}}{\ge} \frac{1}{r^{n+1}}\int_{S_r}\langle A\nabla v,\nabla v\rangle-\frac{n+1}{r^{n+2}}D(r)+\frac{9}{2r^{n+3}}H(r) -\frac{3}{r^{n+2}}\int_{S_r}v\langle A\nu,\nabla v\rangle -Cr^{-1/2}\\
&\stackrel{\phantom{
\eqref{vbounds}}}{=}\frac{1}{r^{n+1}}\int_{S_r}\langle A\nabla v,\nabla v\rangle -\frac{n+1}{r}W_L(v,r)-\frac{3(n-2)}{2r^{n+3}}H(r)-\frac{3}{r^{n+2}}\int_{S_r}v\langle A\nu,\nabla v\rangle -Cr^{-1/2},
\end{align*}
where using \eqref{vbounds} we have estimated $CH(r)\le C r^{n+2}$, $|\int_{S_r} vf|\le C r^{n+\frac 12}$.
Now
\begin{align*}
H(r)&=\int_{S_r}\mu v^2=\int_{S_r}v^2+\int_{S_r}(\mu-1)v^2\\
&\le \int_{S_r}v^2+r\int_{S_r}v^2\stackrel{\eqref{vbounds}}{\le} \int_{S_r}v^2+Crr^{n-1+3}=\int_{S_r}v^2+Cr^{n+3}.
\end{align*}
Similarly,
\begin{align*}
\int_{S_r}\langle A\nabla v,\nabla v\rangle &= \int_{S_r}|\nabla
v|^2+\int_{S_r}\langle (A(x)-A(0))\nabla v,\nabla v\rangle\\
& \le \int_{S_r}|\nabla v|^2+r\int_{S_r}|\nabla v|^2\stackrel{\eqref{vbounds}}{\le} \int_{S_r}|\nabla v|^2+Cr^{n+1}.
\end{align*}
Finally,
\begin{align*}
\int_{S_r}v\langle A\nu,\nabla v\rangle &= \int_{S_r}v\langle
\nu,\nabla v\rangle+\int_{S_r}v\langle (A(x)-A(0))\nu,\nabla
v\rangle\\
&\le \int_{S_r}v\langle \nu,\nabla v\rangle+ Cr\int_{S_r}|v| |\nabla v|\\
&\le \int_{S_r}v\langle \nu,\nabla v\rangle
+Cr\left(\int_{S_r}v^2\int_{S_r}|\nabla v|^2\right)^{\frac{1}{2}}\\
&\stackrel{\eqref{vbounds}}{\le}  \int_{S_r}v\langle \nu,\nabla v\rangle+Crr^{\frac{n-1+3}{2}}r^{\frac{n-1+1}{2}}\\
&=\int_{S_r}v\langle \nu,\nabla v\rangle+Cr^{n+2}.
\end{align*}
This implies
\begin{align*}
\frac{d}{dr}W_L(v,r)&\ge \frac{1}{r^{n+1}}\int_{S_r}|\nabla v|^2-\frac{n+1}{r}W_L(v,r)-\frac{3(n-2)}{2r^{n+3}}\int_{S_r}v^2-\frac{3}{r^{n+2}}\int_{S_r}v\langle \nu,\nabla v\rangle -Cr^{-1/2}\\
&=-\frac{n+1}{r}W_L(v,r)+\frac{1}{r}\int_{S_1}|\nabla v_r|^2-\frac{3(n-2)}{2r}\int_{S_1}v_r^2-\frac{3}{r}\int_{S_1}v_r\langle \nu,\nabla v_r\rangle -Cr^{-1/2}\\
&=-\frac{n+1}{r}W_L(v,r) +\frac{1}{r}\int_{S_1}\left(
  (\langle \nu,\nabla v_r\rangle-\frac{3}{2} v_r)^2+|\D_{\tau}
  v_r|^2-\frac{3}{2}\left(n-\frac{1}{2}\right)v_r^2\right) -C r^{-1/2},
\end{align*}
where $\D_{\tau}v_r$ is the tangential derivative of $v_r$ along
$S_1$. Let $w_r$ be the $\frac{3}{2}$-homogeneous extension of
$v_r\big|_{S_1}$, then

\[
\int_{S_1}\left(|\D_{\tau} v_r|^2-\frac{3}{2}\left(n-\frac{1}{2}\right)v_r^2\right) =\int_{S_1}\left(|\D_{\tau} w_r|^2-\frac{3}{2}\left(n-\frac{1}{2}\right)w_r^2\right).
\]
Recalling that $w_r$ is homogeneous of degree $3/2$, we have on $S_1$ that $\langle \nabla w_r,\nu\rangle = \frac 32 w_r$. This gives
\begin{multline*}
\int_{S_1}\left(|\D_{\tau} w_r|^2-\frac{3}{2}\left(n-\frac{1}{2}\right)w_r^2\right)=\int_{S_1}\left(|\nabla w_r|^2-\langle\nabla w_r,\nu\rangle^2-\frac{3}{2}\left(n-\frac{1}{2}\right)w_r^2\right)\\
=\int_{S_1}\left(|\nabla w_r|^2-\frac{9}{4}w_r^2-\frac{3}{2}\left(n-\frac{1}{2}\right)w_r^2\right)=\int_{S_1}\left(|\nabla w_r|^2-\frac{3}{2}\left(n+1\right)w_r^2\right).
\end{multline*}
Now, since again by the homogeneity of $w_r$,
\[
\int_{S_1}|\nabla w_r|^2=(n+1)\int_{B_1}|\nabla w_r|^2,
\]
we conclude that
\[
\int_{S_1}\left(|\D_{\tau} w_r|^2-\frac{3}{2}\left(n-\frac{1}{2}\right)w_r^2\right)=(n+1)W(w_r,1),
\]
where we recall that, by definition \eqref{W} above,
\[
W(w,s)=W_\Delta(w,s)=\frac{1}{s^{n+1}}\int_{B_s}|\nabla w|^2-\frac{3}{2s^{n+2}}\int_{S_s}w^2.
\]
Hence
\begin{equation}\label{bdW'}
\frac{d}{dr}W_L(v,r)\ge \frac{n+1}{r}\left(W(w_r,1)-W_L(v,r)\right)+\frac{1}{r}\int_{S_1}(\langle \nu,\nabla v_r\rangle - \frac{3}{2} v_r)^2 -Cr^{-1/2}.
\end{equation}
By the hypothesis, for every $s_0\le r\le r_0$ there exists a function
$\zeta_r\in W^{1,2}(B_1)$ such that $\zeta_r\ge 0$ in $B_1'$,
$\zeta_r=v_r$ on $S_1$ and
\begin{equation}\label{epi}
W(\zeta_r,1)\le (1-\kappa)W(w_r,1).
\end{equation}
We note that this inequality continues to hold if as $\zeta_r$ we take
the minimizer of the functional
$W(\cdot,1)$ among all functions $\zeta\in W^{1,2}(B_1)$ with
$\zeta\big|_{S_1}=v_r\big|_{S_1}$ and $\zeta\ge 0$ in $B_1'$. Taking
such minimizer
is equivalent to saying that $\zeta_r$  is the solution of
the thin obstacle problem in $B_1$ for the Laplacian with zero thin
obstacle on $B_1'$ and boundary values $v_r$ on $S_1$. In particular, with this
choice of $\zeta_r$ we will have $W(\zeta_r,1)\le W(v_r,1)$. Next,
\begin{align*}
W(\zeta_r,1)&=\int_{B_1}|\nabla
              \zeta_r|^2-\frac{3}{2}\int_{S_1}\zeta_r^2\\
& \ge \int_{B_1}\langle A(rx)\nabla\zeta_r,\nabla\zeta_r\rangle
-\frac{3}{2}\int_{S_1}\mu(rx)\zeta_r^2 -Cr\int_{B_1}|\nabla\zeta_r|^2-Cr\int_{S_1}\zeta_r^2.
\end{align*}
If we now let $\hat{\zeta}=\zeta_r(x/r)r^{3/2}$, then on $S_r$ we have
that $\hat{\zeta}=v_r(x/r)r^{3/2}=v(x)$, and

\[
\int_{B_1}\langle A(rx)\nabla\zeta_r,\nabla\zeta_r\rangle
-\frac{3}{2}\int_{S_1}\mu(rx)\zeta_r^2=r^{-n-1}\int_{B_r}\langle A\nabla \hat{\zeta},\nabla \hat{\zeta}\rangle-\frac{3}{2}r^{-n-2}\int_{S_r}\mu \hat{\zeta}^2
\]
Since $\hat{\zeta}=v$ on $S_r$, $\hat{\zeta}\geq 0$ on $B_{r}'$ and
$v$ minimizes the energy
\eqref{energy-with-f} over $B_r$ among all such functions, we obtain
\[
\int_{B_r}\left(\langle A\nabla \hat{\zeta},\nabla \hat{\zeta}\rangle + 2f \hat{\zeta}\right)\ge
\int_{B_r}\left(\langle A\nabla v,\nabla v\rangle+ 2fv\right).
\]
Next, by \eqref{vbounds} we have $|v(x)|\leq
C|x|^{3/2}\leq C r^{3/2}$ in $B_r$. Besides, noting that $\hat{\zeta}$
solves the thin
obstacle problem in $B_r$ with boundary values $v$ on $S_r$, by
subharmonicity of $\hat{\zeta}^\pm$, we will have that 
\[
\sup_{B_r}|\hat{\zeta}|\leq \sup_{S_r}v^++\sup_{S_r}v^-\leq C r^{3/2}.
\]
Hence, we obtain
\begin{align*}
\int_{B_r}\langle A\nabla \hat{\zeta},\nabla \hat{\zeta}\rangle&\geq
\int_{B_r}\left(\langle A\nabla v,\nabla v\rangle+ 2fv -2f\hat{\zeta}\right)\\
&\geq \int_{B_r} \langle A\nabla v,\nabla v\rangle -C r^{n+(3/2)}.
\end{align*}
Combining the inequalities above, we have
\begin{equation}\label{Mbd}
\begin{aligned}
W(\zeta,1)&\ge r^{-n-1}\int_{B_r}\langle A\nabla v,\nabla v\rangle-\frac{3}{2}r^{-n-2}\int_{S_r}\mu v^2-Cr\int_{B_1}|\nabla \zeta_r|^2-Cr\int_{S_1}\zeta_r^2-Cr^{1/2}\\
&=W_L(v,r)-Cr\left(W(\zeta_r,1)+\frac{3}{2}\int_{S_1}\zeta_r^2\right)-Cr\int_{S_1}\zeta_r^2-Cr^{1/2}\\
&\ge W_L(v,r)-Cr\left(
  W(v_r,1)+\frac{3}{2}\int_{S_1}\zeta_r^2\right)-Cr\int_{S_1}v_r^2-C r^{1/2}\\
&= W_L(v,r)-Cr\left(\int_{B_1}|\nabla v_r|^2-\frac{3}{2}\int_{S_1}v_r^2+\frac{3}{2}\int_{S_1}\zeta_r^2\right)-Cr\int_{S_1}v_r^2-C r^{1/2}\\
&= W_L(v,r)-Cr\int_{B_1}|\nabla v_r|^2-Cr\int_{S_1}v_r^2-C r^{1/2}\\
&\stackrel{\eqref{vbounds}}{\ge} W_L(v,r)-Cr^{1/2}.
\end{aligned}
\end{equation}
Combining \eqref{epi} and \eqref{Mbd} we obtain
\begin{equation}
\label{compa}
\begin{aligned}
W(w_r,1)-W_L(v,r)&\ge\frac{W(\zeta_r,1)}{1-\kappa}-W_L(v,r)\ge\frac{W_L(v,r)-Cr^{1/2}}{1-\kappa}-W_L(v,r)\\&= \frac{\kappa}{1-\kappa}W_L(v,r)-Cr^{1/2}.
\end{aligned}
\end{equation}
Therefore, from \eqref{bdW'} and \eqref{compa} we conclude that
\begin{equation}\label{WWW}
\begin{aligned}
\frac{d}{dr}W_L(v,r)&\ge \frac{n+1}{r}(W(w_r,1)-W_L(v,r))-Cr^{-1/2}\\
&\ge
\frac{n+1}{r}\left(\frac{\kappa}{1-\kappa}W_L(v,r)-Cr^{1/2}\right)-C r^{-1/2}\\
&\ge\frac{n+1}{r}\frac{\kappa}{1-\kappa}W_L(v,r)-Cr^{-1/2}.
\end{aligned}
\end{equation}
Taking $\gamma\in (0,\frac12 \wedge (n+1)\frac{\kappa}{1-\kappa})$, if we use that $W_L(v,r)\geq -C r^{1/2}$, see Corollary \ref{C:weiss}, then from the above
inequality we will have
\[
\left(W_L(v,r)r^{-\gamma}\right)'\ge -Cr^{-\gamma-1/2}.
\]
Integrating in $(t,r_0)$, for $t\ge s_0$, we obtain 
\[
W_L(v,t)t^{-\gamma}\le W_L(v,r_0)r_0^{-\gamma}+Cr_0^{-\gamma+1/2}-Ct^{-\gamma+1/2}.
\]
This implies, in particular,
\begin{equation*}\label{W32}
W_L(v,t)\le Ct^{\gamma},
\end{equation*}
which establishes \eqref{WW}.

We now prove \eqref{relat}. For a fixed $x$, define
$g(r)=\frac{v(rx)}{r^{3/2}}$, so that $v_t(x)-v_s(x)=g(t)-g(s)$. Then
\begin{align}\label{bb}
\int_{S_1}|v_t-v_s| &=\int_{S_1}\left|\int_s^t
  g'(r)dr\right|
 \\
& \le \int_s^t \left(\int_{S_1}r^{-\frac{3}{2}}\left|\langle \nabla v(rx),\nu\rangle - \frac{3}{2}\frac{v(rx)}{r}\right| \right)dr
\notag\\
&=\int_s^t \left( r^{-1}\int_{S_1}| \langle \nabla v_r,\nu\rangle -\frac{3}{2}v_r| \right) dr
\notag\\
& \le C_n \int_s^t r^{-1/2} \left(r^{-1}\int_{S_1}(\langle \nabla v_r,\nu\rangle -\frac{3}{2}v_r)^2\right)^{1/2}dr.
\notag
\end{align}
By \eqref{bdW'} and \eqref{compa},
\begin{equation}\label{ma}
\begin{aligned}
\frac{1}{r}\int_{S_1}(\langle \nabla v_r,\nu\rangle -\frac{3}{2}v_r)^2& \le \frac{d}{dr}W_L(v,r)+Cr^{-1/2} -\frac{n+1}{r}(W(w_r,1)-W_L(v,r))\\
& \le \frac{d}{dr}W_L(v,r)-\frac{n+1}{r}\frac{k}{1-k}W_L(v,r)+C r^{-1/2}\\
& \le  \frac{d}{dr}W_L(v,r)+C r^{-1/2},
\end{aligned}
\end{equation}
where we have used again Corollary \ref{C:weiss}, that gives $W_L(v,r)\ge - C r^{1/2}$. This proves \eqref{relat} and completes the proof. 
\end{proof}

The next important step after Lemma~\ref{L:reg} is contained in
Proposition \ref{P:uniqreg} below. It proves that the regular
set is a relatively open set of the free boundary, and that if
$x_0\in\Gamma_{3/2}(v)$, then for $r$ small enough and $\bar{x}\in
\Gamma(v)$ in a small neighborhood of $x_0$ we can apply Lemma
\ref{L:reg} to the homogeneous scalings 
$$
v_{\bar{x},r}(x)=\frac{v_{\bar x}(rx)}{r^{3/2}}=\frac{v(\bar
  x+A^{1/2}(\bar x)rx) - b_{\bar x} rx_n}{r^{3/2}},
$$
which in turn proves the uniqueness of the blowup limits.

 \begin{prop}\label{P:uniqreg} Let $v$ solve
   \eqref{c11i}--\eqref{c44i} and $x_0\in\Gamma_{3/2}(v)$. Then, there exist constants
  $r_0=r_0(x_0)$, $\eta_0=\eta_0(x_0)>0$ such that $\Gamma(v)\cap
  B_{\eta_0}'(x_0)\subset \Gamma_{3/2}(v)$. Moreover, if $v_{\bar x, 0}$ is
  any blowup of $v$ at $\bar x\in\Gamma(u)\cap B_{\eta_0}'(x_0)$, as in Definition \ref{dfn:homog}, then 
\[
\int_{S_1}|v_{\bar x,r}-v_{\bar x, 0}|\le Cr^{\gamma},\quad \text{for
  all } r\in (0,r_0),
\]
where $C$ and  $\gamma>0$ are universal constants. In particular, the
blowup limit $v_{\bar x, 0}$ is unique.
\end{prop}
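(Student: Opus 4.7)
My plan is to reduce the proposition to an application of Lemma~\ref{L:reg} with $s_0 = 0$, taken uniformly in $\bar x$ near $x_0$, and to extract both the uniqueness of the blowup and its rate of convergence from the Cauchy estimate supplied by that lemma. The openness assertion $\Gamma(v)\cap B'_{\eta_0}(x_0)\subset \Gamma_{3/2}(v)$ is a direct output of Lemma~\ref{uniformN(r)3/2}, so I may henceforth assume that every $\bar x$ in the neighborhood is regular.

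The crucial step is to verify, for each such $\bar x$ and every $r\in(0,r_0]$, the hypothesis of Lemma~\ref{L:reg}: the $3/2$-homogeneous extension $w_{\bar x, r}$ of $v_{\bar x, r}|_{S_1}$ must admit a competitor $\zeta_r\in W^{1,2}(B_1)$ with $\zeta_r = v_{\bar x, r}$ on $S_1$, $\zeta_r \ge 0$ on $B_1'$ and $W(\zeta_r) \le (1-\kappa) W(w_{\bar x, r})$. Since $v_{\bar x, r}\ge 0$ on $B_1'$, so is $w_{\bar x, r}$, and it suffices to place $w_{\bar x, r}$ in a small $W^{1,2}(B_1)$-neighborhood of $H$ and invoke Theorem~\ref{T:epi} (whose proof starts by passing to the nearest element of $H$, so the conclusion in fact holds whenever $\inf_{g\in H}\|w-g\|_{W^{1,2}(B_1)}$ is universally small).

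To produce this closeness I would exploit the identity
\[
v_{\bar x, r} \,=\, \lambda_{\bar x, r}\, \tilde v_{\bar x, r},\qquad \lambda_{\bar x, r} \,=\, \frac{d_{\bar x, r}}{r^{3/2}} \,=\, \left(\frac{H_{\bar x}(r)}{r^{n+2}}\right)^{\!1/2},
\]
combined with the uniform upper bound $H_{\bar x}(r)\le Cr^{n+2}$ from Corollary~\ref{C:HI}, which gives $\lambda_{\bar x, r}\le C$. Lemma~\ref{vxr-h}, applied with $\theta$ replaced by a sufficiently small $\theta'$, then furnishes $r_0,\eta_0$ such that $\tilde v_{\bar x, r}$ is $\theta'$-close to some $c_n h_{e'_{\bar x, r}}$ in $C^{1,\alpha}(B_1^\pm\cup B_1')$; multiplying by $\lambda_{\bar x, r}$ yields $C^{1,\alpha}(S_1)$-closeness of $v_{\bar x, r}|_{S_1}$ to $\lambda_{\bar x, r}c_n h_{e'_{\bar x, r}}|_{S_1}$. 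Since the $W^{1,2}(B_1)$-norm of a $3/2$-homogeneous function is equivalent to a polar expression in its boundary values (and hence dominated by the $C^{1,\alpha}(S_1)$-norm of the trace), this gives $\inf_{g\in H}\|w_{\bar x, r}-g\|_{W^{1,2}(B_1)}\le C\theta'$. A further shrinking of $\theta'$ delivers the hypothesis of Theorem~\ref{T:epi} and hence of Lemma~\ref{L:reg}.

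Lemma~\ref{L:reg} with $s_0=0$ now gives
\[
\int_{S_1}|v_{\bar x, t}-v_{\bar x, s}|\,\le\, C t^\gamma,\qquad 0<s\le t\le r_0,
\]
so $\{v_{\bar x, r}|_{S_1}\}$ is Cauchy in $L^1(S_1)$ as $r\to 0^+$ and converges to a unique $L^1(S_1)$-limit. By Lemma~\ref{L:homblowupnonzero} and Proposition~\ref{P:homblowupnonzero}, every subsequential $C^{1,\alpha}_{\loc}(\R^n_\pm\cup\R^{n-1})$-limit is a $3/2$-homogeneous blowup $v_{\bar x, 0}$; the $L^1(S_1)$-uniqueness forces all such blowups to coincide, and letting $s\to 0$ in the Cauchy estimate yields the asserted $\int_{S_1}|v_{\bar x, r}-v_{\bar x, 0}|\le Cr^\gamma$. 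The principal obstacle is the uniformity in $\bar x$: the threshold $\theta'$ must be attained simultaneously for every $\bar x\in\Gamma(v)\cap B'_{\eta_0}(x_0)$, which is precisely what the uniform versions of Lemma~\ref{vxr-h}, Corollary~\ref{C:HI}, and Lemma~\ref{L:nice} make possible, allowing a single pair $(r_0,\eta_0)$ depending only on $x_0$ to work for the whole neighborhood.
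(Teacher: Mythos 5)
Your overall route is the same as the paper's: openness from Lemma~\ref{uniformN(r)3/2}, verification of the epiperimetric hypothesis for the homogeneous rescalings via Lemma~\ref{vxr-h} and the relation $v_{\bar x,r}=\lambda_{\bar x,r}\tilde v_{\bar x,r}$, then Lemma~\ref{L:reg} with $s_0=0$ and identification of the $L^1(S_1)$-limit with a subsequential blowup. There is, however, one step whose justification is not valid as stated: the claim that the conclusion of Theorem~\ref{T:epi} ``holds whenever $\inf_{g\in H}\|w-g\|_{W^{1,2}(B_1)}$ is universally small.'' This does not follow from the proof of Theorem~\ref{T:epi}, and as a general principle it is empty: $0\in H$ (take $a=0$), and the epiperimetric property is invariant under multiplication by positive scalars since $W(tw)=t^2W(w)$; hence \emph{every} $3/2$-homogeneous $w\ge 0$ on $B_1'$ has arbitrarily small multiples lying within $\theta'$ of $H$, so smallness of the distance to $H$ alone carries no information. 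The proof of Theorem~\ref{T:epi} passes to the nearest element $g_m=a_m\Re(\langle x',\nu_m\rangle+i|x_n|)^{3/2}$ and divides by $a_m$, and it is essential there that $a_m\to 1$ — which is exactly what closeness to $h$ (coefficient $1$) provides and closeness to $H$ does not. In your application the nearest element has coefficient comparable to $\lambda_{\bar x,r}c_n$, and no positive lower bound on $\lambda_{\bar x,r}=d_{\bar x,r}/r^{3/2}$ is available at this stage: that lower bound is essentially the nondegeneracy statement, which is only proved in Proposition~\ref{P:nondege} and itself relies on the present proposition.

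The repair is contained in your own estimate, read scale-invariantly, and is what the paper makes explicit. Lemma~\ref{vxr-h} gives $\|\tfrac{1}{c_n}\tilde v_{\bar x,r}-h_{e'}\|\le C\theta'$ with $h_{e'}$ having coefficient exactly $1$, so Theorem~\ref{T:epi} (after the harmless rotation sending $e'$ to $e_1$) applies to the homogeneous extension of $\tfrac{1}{c_n}\tilde v_{\bar x,r}\big|_{S_1}$ directly. Since $W(tw)=t^2W(w)$ and $W(t\zeta)=t^2W(\zeta)$, the epiperimetric property of a trace on $S_1$ is inherited by all its nonnegative multiples; multiplying by $c_n\lambda_{\bar x,r}$ then yields the hypothesis of Lemma~\ref{L:reg} for $v_{\bar x,r}$ without ever needing $\lambda_{\bar x,r}$ bounded below (or above — Corollary~\ref{C:HI} is not actually needed here). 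With that substitution your argument closes, and the remaining steps (the Cauchy estimate in $L^1(S_1)$, uniqueness of the blowup, and the uniformity in $\bar x$) are handled as in the paper.
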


\begin{proof} Let $r_0$ and
  $\eta_0$ be as in Lemma~\ref{vxr-h}. Then, for $\bar x\in
  B_{\eta_0}'(x_0)\cap \Gamma(v)\subset \Gamma_{3/2}(v)$ and $0<r<r_0$
  consider two scalings, the homogeneous and Almgren types:
$$
v_{\bar x,r}(x)=\frac{v_{\bar x}(r x)}{r^{3/2}},\quad \tilde v_{\bar
  x,r}(x)=\frac{v_{\bar x}(r x)}{d_{\bar x, r}}
$$
By Lemma~\ref{vxr-h}, the Almgren scaling $\frac{1}{c_n}\tilde
v_{\bar x, r}\big|_{S_1}$ has the epiperimetric property in
the sense that if we extend it as a $3/2$-homogeneous
function in $B_1$, call it $w_r$, then Lemma~\ref{vxr-h} allows us to
apply the epiperimetric inequality to conclude that there exists
$\zeta_r \in W^{1,2}(B_1)$
such that $\zeta_r= \frac{1}{c_n}\tilde{v}_{\bar{x},r}$ on $S_1$,
$\zeta_r\ge 0$ in $B_1'$ and
$$
W(\zeta_r)\leq (1-\kappa) W(w_r).
$$
We next observe that if a certain function on $S_1$ has the
epiperimetric property then so does any of its multiples. In
particular, $v_{\bar x, r}\big|_{S_1}=\frac{c_nd_{\bar{x},r}}{r^{3/2}}\frac{1}{c_n}\tilde{v}_{\bar{x},r}$ has the
epiperimetric property for any $\bar x\in B_{\eta_0}'(x_0)\cap
\Gamma(v)$ and $r<r_0$. Thus, we can apply Lemma~\ref{L:reg} to
conclude that
$$
\int_{S_1} |v_{\bar x, r} -v_{\bar x, s}| \leq C r^\gamma,\quad
\text{for }0<s\leq r<r_0,
$$
for universal $C$ and $\gamma>0$. Now, if over some sequence $v_{\bar x, s_j}\to v_{\bar x, 0}$ (see Lemma~\ref{L:homblowupnonzero}), we will
obtain that
\[
\int_{S_1} |v_{\bar x, r} -v_{\bar x, 0}| \leq C r^\gamma,\quad
\text{for }0<r<r_0.\qedhere
\]
\end{proof}

We notice explicitly that up to this point we have not excluded the
possibility that the blowup $v_{\bar x,0}\equiv 0$. This is done in
the following proposition.

\begin{prop}\label{P:nondege} The unique blowup $v_{\bar x, 0}$ in
  Proposition~\ref{P:uniqreg} is nonzero.
\end{prop}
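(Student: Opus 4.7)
The plan is to argue by contradiction: assume $v_{\bar x,0}\equiv 0$ and use the $L^1$ rate of convergence from Proposition~\ref{P:uniqreg}, combined with the sharp lower bound on $M_{\bar x}$ coming from the frequency gap, to reach an impossibility. If $v_{\bar x,0}\equiv 0$, then Proposition~\ref{P:uniqreg} gives
\[
\int_{S_1}|v_{\bar x,r}|\le Cr^{\gamma},\qquad 0<r<r_0,
\]
with a universal $\gamma>0$. The homogeneous scalings are uniformly bounded on $B_1$ by Lemma~\ref{L:newcrucial}, namely $\|v_{\bar x,r}\|_{L^\infty(\overline{B_1})}\le C$ (since $|v_{\bar x}(y)|\le C|y|^{3/2}$), so I would upgrade the $L^1$-bound to the $L^2$-bound
\[
\int_{S_1}v_{\bar x,r}^2\le \|v_{\bar x,r}\|_{L^\infty(S_1)}\int_{S_1}|v_{\bar x,r}|\le Cr^\gamma.
\]
Undoing the scaling, using $\mu_{\bar x}=1+O(r)$ on $S_r$ and the estimate $\psi_{\bar x}(r)\ge cr^{n-1}$ from Lemma~\ref{L:psi1}, this would translate into $M_{\bar x}(r)=H_{\bar x}(r)/\psi_{\bar x}(r)\le Cr^{3+\gamma}$ for all small $r$.

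Complementing this, the opening paragraph of the proof of Lemma~\ref{L:boundonHr}---which goes through at $\bar x$ thanks to the uniform convergence $\tilde N_{\bar x}(r)\to 3/2$ from Lemma~\ref{uniformN(r)3/2} together with the frequency gap $\tilde N_{\bar x}(0+)=3/2$ or $\tilde N_{\bar x}(0+)\ge (3+\delta)/2$ from Lemma~\ref{L:newcrucial}---provides the sharp lower bound $M_{\bar x}(r)\ge r^{3+\delta}$ for all sufficiently small $r$.

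The decisive point is that $\gamma$ produced by Lemma~\ref{L:reg} depends only on the epiperimetric constant $\kappa$ from Theorem~\ref{T:epi} and on the dimension $n$, so it is independent of the truncation exponent $\delta\in(0,1)$, which remains at our disposal. I would choose $\delta<\gamma$ from the outset, and then combining the two bounds above would yield $r^{\delta-\gamma}\le C$ for all sufficiently small $r>0$. This fails as $r\to 0+$ because $\delta-\gamma<0$, giving the desired contradiction. The main subtlety, and really the heart of the argument, is recognizing that one is free to decouple $\delta$ from $\gamma$; once this observation is made, the rest is routine bookkeeping.
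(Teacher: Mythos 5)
Your proof is correct, and its core is the same contradiction the paper runs: the epiperimetric machinery (Proposition~\ref{P:uniqreg}) forces $H_{\bar x}(r)$ to decay like $r^{n+2+\gamma}$ if the blowup vanishes, while the frequency being $3/2$ at $\bar x$ forbids decay faster than $(3+\text{something small})$-homogeneous. The difference is in how you produce the lower bound. The paper phrases the contradiction through the Almgren scalings $\tilde v_{\bar x,r}$ (whose $L^1(S_1)$-norms are bounded away from zero along a subsequence, since they converge to $c_n\Re(\langle x',e'\rangle+i|x_n|)^{3/2}$) and invokes the \emph{second} conclusion of Lemma~\ref{L:boundonHr}, namely $M_{\bar x}(r)\ge C_\varepsilon r^{3+\varepsilon}$ for \emph{every} $\varepsilon>0$, choosing $\varepsilon<2\gamma$. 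You instead use only the cruder bound $M_{\bar x}(r)\ge r^{3+\delta}$ from the opening of that proof, which forces you to re-fix the truncation exponent $\delta<\gamma$ globally. That maneuver is legitimate --- $\gamma$ depends only on $n$ and the epiperimetric constant $\kappa$, the regular set $\Gamma_{3/2}(v)$ is independent of $\delta$ (precisely because $M(r)\ge C_\varepsilon r^{3+\varepsilon}$ for all $\varepsilon$), and the constants that do depend on $\delta$ are irrelevant to the exponent comparison --- but it is an unnecessary detour: what you call the heart of the argument (decoupling $\delta$ from $\gamma$) evaporates once you use the full strength of Lemma~\ref{L:boundonHr}, where $\varepsilon$ is a free parameter unrelated to $\delta$. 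Your intermediate upgrade from $L^1$ to $L^2$ on $S_1$ via the uniform bound $|v_{\bar x,r}|\le C$ is fine and simply replaces the paper's direct $L^1$ computation with the normalized scalings.
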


\begin{proof}
Assume to the contrary that $v_{\bar x, 0}=0$. Then, from
Proposition~\ref{P:uniqreg} we have that
\[
\int_{S_1} |v_{\bar x, r}| \le
Cr^{\gamma},\quad\text{for }0<r<r_0.
\]
But then,
\begin{equation}\label{comesfromepi}
\int_{S_1}|\tilde v_{\bar x, r}|=\int_{S_1}|v_{\bar x,
  r}|\frac{r^{3/2}}{d_{\bar x, r}}\le C\frac{r^{3/2+\gamma}}{d_{\bar x,
  r}}.
\end{equation}
On the other hand, by Lemma~\ref{L:boundonHr} for every $\ve >0$ there
exists $r_\ve>0$ such that
$$
d_{\bar x, r}=\left(\frac{1}{r^{n-1}}H_{\bar x}(r)\right)\geq c
M_{\bar x}(r)^{1/2}\geq c r^{(3+\varepsilon)/2},\quad \text{for }0<r<r_\varepsilon
$$
If we choose $\varepsilon<2\gamma$, we then obtain as $r\to 0+$ 
$$
\int_{S_1}|\tilde v_{\bar x, r}|\leq C r^{\gamma-\varepsilon/2}\to 0.
$$
Since there exists $e'\in\R^{n-1}$, and a subsequence $r=r_j\to 0+$
such that $\tilde{v}_{\bar x, r}\to c_n\Re(\langle x', e'\rangle
+i|x_n|)^{3/2}$, this is clearly a contradiction.
\end{proof} 

In what follows we denote by 
$$
v_{\bar x, 0}(x)= a_{\bar x} \Re (\langle x',
  \nu_{\bar x}\rangle + i|x_n|^{3/2}),\quad a_{\bar x}>0,\ \nu_{\bar
    x}\in S_1'
$$ the unique homogeneous blowup at $\bar x$.

\begin{prop}\label{blowups-close} Let $v$ be a solution of \eqref{c11i}--\eqref{c44i} with
  $x_0\in \Gamma_{3/2}(v)$. Then, there exists
  $\eta_0>0$ depending on $x_0$ such that
$$
\int_{S_1'}| v_{\bar x, 0}- v_{\bar y, 0}|\leq C |\bar x -\bar y|^\beta\quad
\text{for }\bar x, \bar y \in B_{\eta_0}'(x_0)\cap \Gamma(v),
$$
where $C$ and $\beta>0$ are universal constants.
\end{prop}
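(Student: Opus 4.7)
The plan is to compare the two blowups via an intermediate homogeneous scaling. For $\bar x,\bar y\in B'_{\eta_0}(x_0)\cap\Gamma(v)$---which lies inside $\Gamma_{3/2}(v)$ for $\eta_0$ small enough, by Proposition~\ref{P:uniqreg}---set $\delta=|\bar x-\bar y|$ and $r=\delta^\alpha$ for some $\alpha\in(0,1)$ to be optimized. The triangle inequality gives
\[
\int_{S_1'}|v_{\bar x,0}-v_{\bar y,0}|\le \int_{S_1'}|v_{\bar x,0}-v_{\bar x,r}|+\int_{S_1'}|v_{\bar x,r}-v_{\bar y,r}|+\int_{S_1'}|v_{\bar y,r}-v_{\bar y,0}|.
\]

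For the outer two terms, Proposition~\ref{P:uniqreg} already supplies $\int_{S_1}|v_{\bar x,r}-v_{\bar x,0}|\le Cr^\gamma$. Since both $v_{\bar x,r}$ and $v_{\bar x,0}$ are uniformly bounded in $C^{1,1/2}(B_1^\pm\cup B_1')$---the former by the optimal regularity of \cite{GS} together with the growth bound \eqref{vbounds}, the latter by its explicit form---a standard interpolation between $L^1(S_1)$ and $C^{0,1/2}(S_1)$ upgrades this to $\|v_{\bar x,r}-v_{\bar x,0}\|_{L^\infty(S_1)}\le Cr^{\gamma'}$ for some universal $\gamma'\in(0,\gamma)$, which in particular controls $\int_{S_1'}|v_{\bar x,r}-v_{\bar x,0}|$ by the same quantity (and analogously for the third term).

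For the middle term, with $p=\bar x+A^{1/2}(\bar x)rz$ and $q=\bar y+A^{1/2}(\bar y)rz$, write
\[
v_{\bar x,r}(z)-v_{\bar y,r}(z)=r^{-3/2}\bigl[v(p)-v(q)\bigr]-r^{-1/2}(b_{\bar x}-b_{\bar y})z_n.
\]
For $|z|=1$ and $\delta\ll r$, the segment $[p,q]$ stays at distance $\sim r$ from $\bar x$, so the $C^{1/2}$ regularity of $\nabla v$ yields
\[
v(p)-v(q)=\nabla v(\bar x)\cdot(p-q)+O(r^{1/2}|p-q|),\qquad |p-q|\le C\delta(1+r).
\]
The crucial step is the cancellation $\nabla v(\bar x)\cdot(\bar x-\bar y)=0$: the normalization $|\nabla v_{\bar x}(0)|=0$ forces $A^{1/2}(\bar x)\nabla v(\bar x)=b_{\bar x}e_n$, and \eqref{mjai} makes $e_n$ an eigenvector of $A(\bar x)$ (hence of $A^{-1/2}(\bar x)$), so $\nabla v(\bar x)$ is parallel to $e_n$; since $\bar x-\bar y$ is tangential, the pairing vanishes. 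The remaining piece $\nabla v(\bar x)\cdot(A^{1/2}(\bar x)-A^{1/2}(\bar y))rz$ is $O(\delta r)$, and $|b_{\bar x}-b_{\bar y}|\le C\delta^{1/2}$ by the $C^{1/2}$ regularity of $\bar x\mapsto b_{\bar x}$ noted in Section~\ref{S:not}. Combining, one gets $|v_{\bar x,r}(z)-v_{\bar y,r}(z)|\le C(\delta/r)^{1/2}$ uniformly on $S_1$, provided $\delta\le r$.

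Choosing $\alpha=1/(2\gamma'+1)$ equates the rate $r^{\gamma'}=\delta^{\alpha\gamma'}$ with $(\delta/r)^{1/2}=\delta^{(1-\alpha)/2}$, delivering the conclusion with universal $\beta=\gamma'/(2\gamma'+1)>0$. The main obstacle is the middle-term estimate: the singular $r^{-3/2}$ prefactor forces one to extract enough decay from $v(p)-v(q)$, for which both the sharp $C^{1,1/2}$ regularity of $v$ from \cite{GS} and the tangential-orthogonality cancellation coming from \eqref{mjai} are essential.
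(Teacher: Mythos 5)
Your argument is correct and its skeleton coincides with the paper's: the same triangle inequality through the intermediate homogeneous scalings $v_{\bar x,r}$, $v_{\bar y,r}$, the outer terms controlled by the $Cr^\gamma$ decay of Proposition~\ref{P:uniqreg}, the middle term bounded by $C(|\bar x-\bar y|/r)^{1/2}$ using the $C^{1,1/2}$ regularity from \cite{GS}, the normalization $\nabla v_{\bar x}(0)=0$ together with \eqref{mjai}, and the H\"older continuity of $\bar x\mapsto b_{\bar x}$, followed by the choice $r=|\bar x-\bar y|^{\sigma}$ to balance the two rates. The one place where you genuinely diverge from the paper is the reduction from $S_1$ to the thin sphere $S_1'$. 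The paper first proves the $L^1(S_1)$ estimate for $v_{\bar x,0}-v_{\bar y,0}$ and then transfers it to $S_1'$ by observing that $(v_{\bar x,0}-v_{\bar y,0})_\pm$ are subharmonic, applying a Caccioppoli-type energy inequality and the trace inequality, and finally using homogeneity and H\"older. You instead upgrade the $L^1(S_1)$ control of the outer terms to $L^\infty(S_1)$ by interpolating against the uniform $C^{0,1/2}(S_1)$ bounds (which follow from the scale-invariance of the $C^{1,1/2}$ estimates and \eqref{vbounds}), while your middle-term bound is already pointwise; restriction to $S_1'$ is then immediate. Your device is more elementary and avoids the subharmonicity argument entirely, at the cost of a smaller (but still universal) exponent $\gamma'=\gamma/(2n-1)$ coming from the interpolation; since only positivity and universality of $\beta$ matter, this loss is harmless. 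One point worth making explicit in a final write-up: when you Taylor-expand $v(p)-v(q)$ along the segment $[p,q]$, the gradient of $v$ is only $C^{1/2}$ up to the thin space from each side, so you should note (as the paper implicitly does) that by \eqref{mjai} the vector $e_n$ is an eigenvector of $A^{1/2}(\bar x)$ and $A^{1/2}(\bar y)$, whence the whole segment stays in one closed half-space for each fixed $z$ and the one-sided regularity suffices.
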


\begin{proof} Let $\eta_0$ and $r_0$ be as in
  Proposition~\ref{P:uniqreg}. Then,
  we will have
$$
\int_{S_1} |v_{\bar x, 0}-v_{\bar y,0}|\leq C r^\gamma + \int_{S_1}
|v_{\bar x, r}-v_{\bar y,r}|
$$
for any $r<r_0$ and $\bar x, \bar y \in B_{\eta_0}\cap
\Gamma(v)$. In this inequality we will chose $r=|\bar x-\bar y|^{\sigma}$ with $0<\sigma<1$ to be specified below.
We then estimate the integral on the right-hand side of the above
inequality. First we notice that 
\begin{align*}
  v_{\bar x}(z)&=v(\bar x+A^{1/2}(\bar x)z)-\langle A^{1/2}(\bar x)\nabla
  v(\bar x),e_n \rangle z_n\\
&= v(\bar x+A^{1/2}(\bar x)z)-\D_n v(\bar x)\langle{\bar x+A^{1/2}(\bar x) z, e_n}\rangle
\end{align*}
by using property \eqref{mjai} of the coefficient matrix
$A$. Therefore, denoting 
\begin{align*}
\xi(s)&=[s\bar x+(1-s)\bar
y]+[sA^{1/2}(\bar x)+(1-s)A^{1/2}(\bar y)]z\\
p(s)&=s\D_nv(\bar x)+(1-s)\D_n v(\bar
  y)
\end{align*}
we will have
\begin{align*}
v_{\bar x}(z)-v_{\bar y}(z)&=\int_0^1\frac{d}{ds}\left(v(\xi(s))-p(s)\langle\xi(s),e_n\rangle\right)ds\\
& =\int_0^1 \bigl(\langle \nabla v(\xi(s))- p(s)e_n, [\bar x-\bar
  y]+[A^{1/2}(\bar x)-A^{1/2}(\bar y)]z\rangle\\
&\qquad\mbox{}- [\D_n v(\bar x)-\D_n
  v(\bar y)]\langle [sA^{1/2}(\bar x)+(1-s)A^{1/2}(\bar y)]z ,e_n\rangle\bigr)ds
\end{align*}
where for the last term we have used the orthogonality  $\langle{[s\bar x+(1-s)\bar
y]}, e_n\rangle=0$. This gives
\begin{align*}
|v_{\bar x}(z)-v_{\bar y}(z)|&\leq C(|\bar x-\bar y|+|z|)^{1/2}(|\bar
x-\bar y| + |z||\bar x-\bar y|)+C|\bar
x-\bar y|^{1/2}|z|.
\end{align*}
Using the above estimate we then obtain
\begin{align*}
\int_{S_1}|v_{\bar x, r}-v_{\bar y,r}|&=\int_{S_1} \frac{|v_{\bar
    x}(rz)-v_{\bar y}(rz)|}{r^{3/2}}\\
&\leq C\frac{(|\bar x-\bar y|+ r)^{1/2}|\bar x-\bar y|(1+r)+|\bar
  x-\bar y|^{1/2}r}{r^{3/2}}\\
&\leq C|\bar x-\bar y|^{1-\sigma}+C|\bar x-\bar y|^{(1-\sigma)/2}\leq C|\bar x-\bar y|^{(1-\sigma)/2},
\end{align*}
if we choose $r=|\bar x-\bar y|^{\sigma}$ with $0<\sigma<1$.

Going back to the beginning of the proof, we conclude
$$
\int_{S_1} |v_{\bar x, 0}-v_{\bar y,0}|\leq C (|\bar x-\bar
y|^{\sigma\gamma}+ |\bar x-\bar y|^{(1-\sigma)/2})=C|\bar x-\bar
y|^{2\beta},
$$
with $2\beta={\gamma/(1+2\gamma)}$ if we choose $\sigma=1/(1+2\gamma)$.

It remains to show that we can change the integration over
$(n-1)$-dimensional sphere $S_1$ to $(n-2)$-dimensional $S_1'$. To
this end, we note that both $v_{\bar x, 0}$ and $v_{\bar y,0}$ are solutions
of the Signorini problem for the Laplacian with zero thin obstacle and
therefore
$$
\Delta(v_{\bar x, 0}-v_{\bar y,0})_\pm\geq 0\footnotemark{}.
$$
\footnotetext{\emph{Short proof:} the only nontrivial place to verify
  the subharmonicity
of $(v_{\bar x, 0}-v_{\bar y,0})_+$ is at points $z\in \R^{n-1}$
with $v_{\bar x, 0}(z)>0$ and $v_{\bar y, 0}(z)=0$; but near
such $z$, $\Delta v_{\bar x, 0}=0$ and $\Delta v_{\bar y, 0}\leq 0$ by
the Signorini conditions on $\R^{n-1}$.}%
Thus by the energy inequality we obtain that
$$
\int_{B_1}|\nabla (v_{\bar x, 0}-\nabla
v_{\bar y,0})|^2\leq C\int_{B_1}|(v_{\bar x, 0}-v_{\bar y,0})|^2\leq C
|\bar x-\bar y|^{2\beta}.
$$
(Recall that $v_{\bar z,0}$ are $3/2$-homogeneous functions with
uniform $C^{1,1/2}$ estimates). Then, using the trace inequality, we obtain that
$$
\int_{B_{1/2}'} |v_{\bar x, 0}-v_{\bar y,0}|^2\leq C |\bar x-\bar y|^{2\beta}
$$
This is equivalent to 
\[
\int_{S_1'} |v_{\bar x, 0}- v_{\bar y,0}|^2\leq C |\bar x-\bar y|^{2\beta},
\]
and using H\"older's inequality
\[
\int_{S_1'} |v_{\bar x, 0}- v_{\bar y,0}|\leq C |\bar x-\bar y|^{\beta},
\]
as claimed.
\end{proof}

\begin{lemma}\label{a-nu-Holder} Let $v$ be as in Proposition~\ref{blowups-close} and
  $v_{\bar x, 0}= a_{\bar x} \Re (\langle x',\nu_{\bar x}\rangle + i|x_n|)^{3/2}$ be the unique homogeneous blowup at
  $\bar x\in B_{\eta_0}'(x_0)\cap \Gamma(v)$. Then, for a constant
  $C_0$ depending on $x_0$ we have
\begin{align}\label{aHolder}
|a_{\bar x}-a_{\bar y}|&\leq C_0|\bar x-\bar y|^{\beta}\\
\label{nuHolder}
|\nu_{\bar x}-\nu_{\bar y}|&\leq C_0|\bar x-\bar y|^{\beta},
\end{align}
for $\bar x,\bar y\in B_{\eta_0}'(x_0)\cap \Gamma(v)$.
\end{lemma}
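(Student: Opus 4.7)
The plan is to convert the $L^1(S_1')$-closeness of the blowups provided by Proposition~\ref{blowups-close} into pointwise closeness of the parameters $(a_{\bar x},\nu_{\bar x})$; I work throughout with $\bar x,\bar y\in B'_{\eta_0}(x_0)\cap\Gamma(v)$. On the thin sphere the blowups take the explicit form $v_{\bar z,0}(x',0)=a_{\bar z}(\langle x',\nu_{\bar z}\rangle)_+^{3/2}$, and the integral
\[
c_n \defeq \int_{S_1'}(\langle x',\nu\rangle)_+^{3/2}\,dx'>0
\]
is a dimensional constant independent of $\nu\in S_1'$, by rotational symmetry of $S_1'$. Proposition~\ref{blowups-close} then yields at once
\[
c_n|a_{\bar x}-a_{\bar y}|=\Bigl|\int_{S_1'}(v_{\bar x,0}-v_{\bar y,0})\Bigr|\le \int_{S_1'}|v_{\bar x,0}-v_{\bar y,0}|\le C|\bar x-\bar y|^\beta,
\]
which is \eqref{aHolder}. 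Since $a_{x_0}>0$ by Proposition~\ref{P:nondege}, after possibly shrinking $\eta_0$ I may assume $a_{\bar x}\ge a_{x_0}/2=:c_0>0$ throughout the neighborhood.

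Next, I would isolate the directional term by the decomposition
\[
v_{\bar x,0}-v_{\bar y,0}=(a_{\bar x}-a_{\bar y})(\langle \cdot,\nu_{\bar x}\rangle)_+^{3/2}+a_{\bar y}\bigl[(\langle \cdot,\nu_{\bar x}\rangle)_+^{3/2}-(\langle \cdot,\nu_{\bar y}\rangle)_+^{3/2}\bigr].
\]
Taking $L^1(S_1')$-norms, combining Proposition~\ref{blowups-close} with \eqref{aHolder} and the lower bound $a_{\bar y}\ge c_0$, this gives
\[
\bigl\|(\langle \cdot,\nu_{\bar x}\rangle)_+^{3/2}-(\langle \cdot,\nu_{\bar y}\rangle)_+^{3/2}\bigr\|_{L^1(S_1')}\le C|\bar x-\bar y|^\beta.
\]
Hence \eqref{nuHolder} reduces to proving a coercivity estimate of the form $\|(\langle \cdot,\nu\rangle)_+^{3/2}-(\langle \cdot,\tilde\nu\rangle)_+^{3/2}\|_{L^1(S_1')}\ge c\,|\nu-\tilde\nu|$ for $\nu,\tilde\nu\in S_1'$ sufficiently close.

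For this coercivity I would apply the fundamental theorem of calculus along $\nu_t=(1-t)\tilde\nu+t\nu$ to obtain
\[
(\langle x',\nu\rangle)_+^{3/2}-(\langle x',\tilde\nu\rangle)_+^{3/2}=\tfrac{3}{2}\int_0^1(\langle x',\nu_t\rangle)_+^{1/2}\langle x',\nu-\tilde\nu\rangle\,dt.
\]
The key observation is that for each fixed $x'$ the sign of $\langle x',\nu-\tilde\nu\rangle$ is independent of $t$, so with $\omega=(\nu-\tilde\nu)/|\nu-\tilde\nu|$ the absolute value factors as
\[
\bigl|(\langle x',\nu\rangle)_+^{3/2}-(\langle x',\tilde\nu\rangle)_+^{3/2}\bigr|=\tfrac{3}{2}|\nu-\tilde\nu|\int_0^1(\langle x',\nu_t\rangle)_+^{1/2}|\langle x',\omega\rangle|\,dt.
\]
After Fubini, the coercivity boils down to a uniform positive lower bound for $\int_{S_1'}(\langle x',e\rangle)_+^{1/2}|\langle x',\omega\rangle|\,dx'$ over $(e,\omega)\in S_1'\times S_1'$; this follows by continuity in $(e,\omega)$, compactness of $S_1'\times S_1'$, and the fact that the nonnegative integrand is strictly positive on a set of positive measure for every such pair. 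The local applicability is automatic, since Proposition~\ref{blowups-close} with $\bar y=x_0$, combined with \eqref{aHolder} and the obvious injectivity of $\nu\mapsto(\langle\cdot,\nu\rangle)_+^{3/2}$, forces $\nu_{\bar x}\to\nu_{x_0}$ as $\bar x\to x_0$, so after a final shrink of $\eta_0$ the coercivity yields \eqref{nuHolder}. The main obstacle is precisely this coercivity lemma; everything else is symmetry and bookkeeping once the $L^1(S_1')$-continuity of $\bar x\mapsto v_{\bar x,0}$ is in hand.
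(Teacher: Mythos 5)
Your proof is correct and follows essentially the same route as the paper: \eqref{aHolder} from the $\nu$-independence of $\int_{S_1'}(\langle x',\nu\rangle)_+^{3/2}$, reduction of \eqref{nuHolder} to a coercivity bound for $\nu\mapsto(\langle \cdot,\nu\rangle)_+^{3/2}$ after dividing out $a_{\bar y}\ge a_{x_0}/2$, and that coercivity by differentiating in the direction parameter (the paper relegates this step to a footnote computing $\partial_\theta$ at $\theta=0$ over a subregion of $S_1'$, while you integrate along the chord $\nu_t$ and invoke compactness). The only bookkeeping point worth recording is that $\nu_t=(1-t)\tilde\nu+t\nu$ is not a unit vector, but since $|\nu_t|\ge 1/2$ when $|\nu-\tilde\nu|\le 1$ one has $(\langle x',\nu_t\rangle)_+^{1/2}\ge 2^{-1/2}(\langle x',\nu_t/|\nu_t|\rangle)_+^{1/2}$, so your reduction to a uniform lower bound over unit vectors stands.
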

\begin{proof} The first inequality follows from the observation that
$$
C_n a_{\bar x}=\|v_{\bar x, 0}\|_{L^1(S_1')},\quad C_n a_{\bar
  y}=\|v_{\bar y, 0}\|_{L^1(S_1')}
$$
with the same dimensional constant $C_n$ and therefore
$$
C_n|a_{\bar x}-a_{\bar y}|\leq \|v_{\bar x, 0}-v_{\bar
  y,0}\|_{L^1(S_1')}\leq C|\bar x-\bar y|^{\beta},
$$
which establishes \eqref{aHolder}.
To prove \eqref{nuHolder}, we first note that by \eqref{aHolder}
$$
\|a_{\bar x}\langle z,\nu_{\bar x}\rangle_+^{3/2}-a_{\bar y}\langle
z,\nu_{\bar y}\rangle_+^{3/2}\|_{L^1(S_1')}\leq C |\bar x-\bar y|^\beta
$$
and therefore we obtain
$$
\|\langle z,\nu_{\bar x}\rangle_+^{3/2}-\langle
z,\nu_{\bar y}\rangle_+^{3/2}\|_{L^1(S_1')}\leq C_0 |\bar x-\bar y|^\beta.
$$
(Here we have used that by \eqref{aHolder} we may assume that $a_{\bar
  y}> a_{x_0}/2$ if $\eta_0$ is small).

On the other hand, it is easy to see from geometric
considerations\footnotemark{} that 
$$
\|\langle z,\nu_{\bar x}\rangle_+^{3/2}-\langle
z,\nu_{\bar y}\rangle_+^{3/2}\|_{L^1(S_1')}\geq c_n|\nu_{\bar
  x}-\nu_{\bar y}|
$$
\footnotetext{just notice that 
$
\D_\theta\int\limits_{\substack{\langle
    z,e_1\rangle\geq 1/2\\ \langle z,e_2\rangle\geq 1/2}}
\langle z,\cos\theta e_1+\sin\theta
e_2\rangle^{3/2}\big|_{\theta=0}=\frac32\int\limits_{\substack{\langle
    z,e_1\rangle\geq 1/2\\ \langle z,e_2\rangle\geq 1/2}} \langle z,e_1\rangle^{1/2}\langle
z,e_2\rangle >0
$}
implying that
\[
c_n|\nu_{\bar x}-\nu_{\bar y}|\leq C_0 |\bar x-\bar y|^\beta.\qedhere
\]
\end{proof}

\begin{theo}\label{free-bound-reg} Let $v$ be a solution of
  \eqref{c11i}--\eqref{c44i} with $x_0\in \Gamma_{3/2}(v)$. Then there
  exists a positive $\eta_0$ depending on $x_0$ such that
  $B'_{\eta_0}(x_0)\cap \Gamma(v)\subset \Gamma_{3/2}(v)$ and
$$
B'_{\eta_0}\cap \Lambda(v)=B'_{\eta_0}\cap \{x_{n-1}\leq g(x'')\}
$$
for $g\in C^{1,\beta}(\R^{n-2})$ with a universal exponent
$\beta\in(0,1)$, after a possible rotation of coordinate axes in $\R^{n-1}$.
\end{theo}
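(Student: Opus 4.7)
The plan is to combine the results of Section~\ref{S:final} established up to this point: Proposition~\ref{P:uniqreg} (uniqueness of homogeneous blowups with rate $r^{\gamma}$ in $L^{1}(S_{1})$), Proposition~\ref{P:nondege} (nontriviality of the blowup $v_{\bar x, 0}(z) = a_{\bar x}\Re(\langle z', \nu_{\bar x}\rangle + i|z_{n}|)^{3/2}$, with $a_{\bar x} \geq a_{0} > 0$), and Lemma~\ref{a-nu-Holder} (Hölder continuity of the map $\bar x \mapsto (a_{\bar x}, \nu_{\bar x})$). After rotating $\R^{n-1}$ so that $\nu_{x_{0}} = e_{n-1}$ and shrinking $\eta_{0}$, Lemma~\ref{a-nu-Holder} yields $\langle \nu_{\bar x}, e_{n-1}\rangle \geq 3/4$ uniformly for every $\bar x \in \Gamma(v) \cap B'_{\eta_{0}}(x_{0})$, so it will suffice to show that this set is a $C^{1,\beta}$ graph in the $e_{n-1}$-direction.

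The key intermediate step is to upgrade the integral convergence rate of Proposition~\ref{P:uniqreg} into a \emph{strip estimate} for the free boundary:
\begin{equation}\label{e:strip}
\Gamma(v) \cap B'_{r}(\bar x) \subset \bigl\{z' : |\langle z' - \bar x, \nu_{\bar x}\rangle| \leq C r^{1 + \gamma'}\bigr\},\quad r<r_{0},
\end{equation}
for a universal $\gamma' > 0$. To obtain \eqref{e:strip}, I would first interpolate the $L^{1}(S_{1})$ rate from Proposition~\ref{P:uniqreg} against the uniform $C^{1,1/2}$ estimates for $v_{\bar x, r}$ (from \cite{GS}) and for $v_{\bar x, 0}$ (automatic from its $3/2$-homogeneity away from the origin), obtaining a pointwise bound $\|v_{\bar x, r} - v_{\bar x, 0}\|_{L^{\infty}(S_{1}^{\pm} \cup S'_{1})} \leq C r^{\gamma_{1}}$ for a universal $\gamma_{1} > 0$. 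Since the explicit blowup is nondegenerate off its coincidence set, namely $v_{\bar x, 0}((z',0)) = a_{\bar x}\langle z', \nu_{\bar x}\rangle_{+}^{3/2} \geq a_{0}\tau^{3/2}$ on $S'_{1} \cap \{\langle z', \nu_{\bar x}\rangle \geq \tau\}$, pointwise closeness forces $v_{\bar x, r} > 0$ at every $(z',0)\in S'_{1}$ with $\langle z', \nu_{\bar x}\rangle > C r^{2\gamma_{1}/3}$. Rescaling this at every scale $r\in(0,r_{0})$ gives \eqref{e:strip}, with $\gamma' = 2\gamma_{1}/3$.

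With the strip estimate in hand, the transversality $\langle \nu_{\bar x}, e_{n-1}\rangle \geq 3/4$ immediately implies that $\Gamma(v) \cap B'_{\eta_{0}}(x_{0})$ is a Lipschitz graph $x_{n-1} = g(x_{1}, \ldots, x_{n-2})$, since two distinct points of $\Gamma(v)$ sharing the same first $n-2$ coordinates would violate \eqref{e:strip}. Evaluating \eqref{e:strip} at various free boundary points $\bar x$, and using the Hölder continuity of $\nu_{\bar x}$ from Lemma~\ref{a-nu-Holder}, one reads off that $g$ is differentiable at every $\bar x \in \Gamma(v) \cap B'_{\eta_{0}}(x_{0})$ with gradient determined by the first $n-2$ components of $\nu_{\bar x}$ divided by $-\langle \nu_{\bar x}, e_{n-1}\rangle$, and that this gradient is $\beta$-Hölder continuous along the free boundary; a standard continuity argument then extends $\nabla g$ to a $C^{0,\beta}$ function on all of $B''_{\eta_{0}}$. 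The main obstacle is the derivation of \eqref{e:strip}: the $L^{1}(S_{1})$ convergence produced by the epiperimetric-inequality machinery of Lemma~\ref{L:reg} and Proposition~\ref{P:uniqreg} has to be promoted to a quantitative pointwise statement strong enough to exploit the non-degeneracy of the explicit blowup $v_{\bar x, 0}$, and the interpolation against the interior $C^{1,1/2}$ estimates of \cite{GS} is what makes this possible.
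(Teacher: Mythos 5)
Your overall architecture is the right one and closely parallels the paper's: uniform closeness of $v_{\bar x,r}$ to the explicit blowup, non-degeneracy of the blowup, and H\"older continuity of $\bar x\mapsto \nu_{\bar x}$ combine to give a graph with H\"older-continuous normal. The quantitative strip estimate \eqref{e:strip} with rate $r^{1+\gamma'}$ is in fact a slightly sharper packaging than the paper's $\varepsilon$-cone conditions. However, there is a genuine gap in the derivation of \eqref{e:strip}: your argument only establishes \emph{one} of the two inclusions needed.

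The interpolation of the $L^1(S_1)$ rate against the uniform $C^{1,1/2}$ bounds does give $\|v_{\bar x,r}-v_{\bar x,0}\|_{L^\infty}\le Cr^{\gamma_1}$, and since $v_{\bar x,0}(\cdot,0)=a_{\bar x}\langle \cdot,\nu_{\bar x}\rangle_+^{3/2}$ is bounded below on $\{\langle z',\nu_{\bar x}\rangle\ge \tau\}$, this forces $v_{\bar x,r}>0$ there; that controls the \emph{positive} side, i.e.\ it shows $\Lambda(v_{\bar x})$ (and hence $\Gamma(v_{\bar x})$) avoids the region $\{\langle z',\nu_{\bar x}\rangle> Cr^{\gamma'}\}$ after rescaling. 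But on the \emph{negative} side the blowup vanishes identically, so $L^\infty$-closeness only tells you that $v_{\bar x,r}$ is \emph{small} there, not that it is zero. Nothing in your sketch excludes small positive islands of $\{v_{\bar x,r}(\cdot,0)>0\}$ deep inside $\{\langle z',\nu_{\bar x}\rangle<-Cr^{\gamma'}\}$, whose boundaries would be free boundary points violating the two-sided strip \eqref{e:strip}. Without the second inclusion you cannot conclude that $\Lambda(v)$ is the full subgraph $\{x_{n-1}\le g\}$, nor even that $\Gamma(v)$ is a graph: the cone-condition argument for Lipschitz graphs needs both a cone inside the positivity set \emph{and} the opposite cone inside the coincidence set.

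The paper closes exactly this hole in Step~3 of its proof by a mechanism your proposal never invokes: on the opposite cone $-K_\varepsilon(\nu_{\bar x})$ the blowup has a strictly positive jump in the normal derivative, $(\partial_{x_n}^--\partial_{x_n}^+)v_{\bar x,0}\ge a_{\bar x}c_\varepsilon>0$, so uniform $C^1$ (not merely $L^\infty$) convergence of $v_{\bar x,r}$ to $v_{\bar x,0}$ forces $\langle A_{\bar x}(rx)\nabla v_{\bar x,r},\nu_+\rangle+\langle A_{\bar x}(rx)\nabla v_{\bar x,r},\nu_-\rangle>0$ there, and then the Signorini complementarity condition \eqref{c44i} forces $v_{\bar x,r}(\cdot,0)=0$ on that cone. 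To repair your argument you would need to add this step (which also requires upgrading your $L^\infty$ estimate to a $C^1$ estimate on $B_1'$, obtainable by the same interpolation against the uniform $C^{1,1/2}$ bounds, or by the compactness argument of the paper's Step~1). With that inclusion added, the rest of your outline (transversality, graph property, differentiability and H\"older continuity of $\nabla g$ via Lemma~\ref{a-nu-Holder}) goes through as in the paper's Steps~4 and~5.
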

\begin{proof} We subdivide the proof into several steps.

\medskip
\noindent \emph{Step 1.} Let $\eta_0$ be as Proposition~\ref{P:uniqreg}. We then claim that for any $\varepsilon>0$ there is $r_\varepsilon>0$ such that
$$
\|v_{\bar x, r}-v_{\bar x,0}\|_{C^1(B_1^\pm\cup B_1')}<\varepsilon,\quad\text{for }\bar
x\in B'_{\eta_0/2}(x_0)\cap \Gamma(v),\ r<r_\varepsilon.
$$
Indeed, arguing by contradiction, we will have a sequence of points
$\bar x_j\in  B'_{\eta_0/2}(x_0)\cap \Gamma(v)$ and radii $r_j\to 0$
such that
$$
\|v_{\bar x_j, r_j}-v_{\bar x_j,0}\|_{C^1(B_1^\pm\cup B_1')}\geq \varepsilon_0
$$
for some $\varepsilon_0>0$. Clearly, we may assume that $\bar
x_j\to \bar x_0\in \overline {B_{\eta_0/2}'(x_0)}\cap \Gamma(v)$. Now,
the scalings $v_{\bar x_j, r_j}$ are
uniformly bounded in $C^{1,1/2}(B_R^\pm\cup B'_R)$ for any $R>0$ and
thus we may assume that 
$$
v_{\bar x_j, r_j}\to w\quad\text{in }C^1_\loc((\R^n)^\pm\cup
\R^{n-1}\}.
$$
We claim that actually $w=v_{\bar x_0,0}$.  Indeed, by integrating the
inequality in Proposition~\ref{P:uniqreg}, we will
  have 
$$
\|v_{\bar x, r}-v_{\bar x, 0}\|_{L^1(B_R)}\leq C_R r^\gamma, \quad \text{for }\bar
x\in B'_{\eta_0}(x_0)\cap \Gamma(v),\ r<r_0/R,
$$
which will immediately imply that $v_{\bar x_j, 0}\to w$ in
$L^1(B_R)$. On the other hand, Lemma~\ref{a-nu-Holder} implies that
$v_{\bar x_j, 0}\to v_{\bar x_0, 0}$ in $C^1(B_R^\pm\cup B_R')$. Hence
$w=v_{\bar x_0, 0}$. Moreover, we get that both $v_{\bar x_j,r_j}$ and
$v_{\bar x_j,0}$ converge in $C^1(B_1^\pm\cup B_1')$ to the same function
$v_{\bar x_0,0}$ and therefore
$$
\|v_{\bar x_j, r_j}-v_{\bar x_j,0}\|_{C^1 (B_1^\pm\cup B_1')}\to 0
$$
contrary to our assumption.

\medskip
\noindent \emph{Step 2}. For a given $\varepsilon>0$ and a unit vector
$\nu\in\R^{n-1}$ define the cone 
$$
\mathcal{C}_\varepsilon(\nu)=\{x'\in\R^{n-1}\mid \langle x',\nu\rangle \geq \varepsilon |x'|\}
$$
We then claim that for any $\varepsilon>0$
  there exists $r_\varepsilon$ such that for any $\bar x\in
  B'_{\eta_0/2}(x_0)\cap \Gamma(v)$ we have
$$
\mathcal{C}_\varepsilon(\nu_{\bar x})\cap B'_{r_\varepsilon}\subset
\{v_{\bar x}(\cdot, 0)>0\}.
$$
Indeed, consider a cutout from the sphere $S'_{1/2}$ by the cone $\mathcal{C}_\varepsilon(\nu)$ 
$$
K_\varepsilon(\nu)= \mathcal{C}_\varepsilon(\nu)\cap S'_{1/2}.
$$
Note that 
$$
K_\varepsilon(\nu_{\bar x})\Subset\{v_{\bar x, 0}(\cdot, 0)>0\}\cap B_1'\quad\text{and}\quad
v_{\bar x, 0}(\cdot, 0)\geq a_{\bar x} c_\varepsilon\quad\text{on
}K_\varepsilon(\nu_{\bar x})
$$
for some universal $c_\varepsilon>0$. Without loss of generality, by Lemma~\ref{a-nu-Holder}, we may assume
that $a_{\bar x}\geq a_0/2$ for $\bar x\in B'_{\eta_0}(x_0)\cap
\Gamma(v)$. Then, applying Step 1 above, we can find $r_\varepsilon>0$
such that
$$
v_{\bar x, r}(\cdot, 0)> 0\quad\text{on } K_\varepsilon (\nu_{\bar
  x}),\quad\text{for all }r<r_\varepsilon
$$
Scaling back by $r$, we have
$$
v_{\bar x}(\cdot, 0)> 0\quad\text{on}\quad r K_{\varepsilon}(\nu_{\bar x})=\mathcal{C}_\varepsilon(\nu)\cap S'_{r/2},\quad r<r_\varepsilon
$$
Taking the union over all $r<r_\varepsilon$, we obtain
$$
\mathcal{C}_\varepsilon(\nu)\cap
B_{r_\varepsilon/2}'\subset \{v_{\bar x}(\cdot, 0) >0\}.
$$

\medskip\noindent
\emph{Step 3.} We next claim that for any $\varepsilon>0$
  there exists $r_\varepsilon$ such that for any $\bar x\in
  B'_{\eta_0/2}(x_0)\cap \Gamma(v)$ we have
$$
- \mathcal{C}_\varepsilon(\nu_{\bar x})\cap B'_{r_\varepsilon}\subset
\{v_{\bar x}(\cdot,0)=0\}.
$$
To prove that we note that
$$ 
- K_\varepsilon(\nu_{\bar x})\Subset\{v_{\bar x, 0}(\cdot, 0)=0\}\cap B_1'\quad\text{and}\quad
(\partial_{x_n}^--\partial_{x_n}^+)v_{\bar x, 0}(\cdot, 0)\geq a_{\bar x} c_\varepsilon>(a_{0}/2)c_\varepsilon\quad\text{on
}- K_\varepsilon(\nu_{\bar x})
$$
for a universal $c_\varepsilon>0$. Then, from Step 1, we will have the
existence of $r_\varepsilon>0$ such that
$$
\langle A_{\bar x}(rx)\nabla v_{\bar x, r}(x),e_{n}^-\rangle +\langle A_{\bar x}(rx)\nabla v_{\bar x, r}(x),e_{n}^+\rangle >0\quad\text{on }-K_{\varepsilon}(\nu_{\bar x}),\quad\text{for all }r<r_\varepsilon.
$$
Thus, from the complementarity
conditions, we will have
$$
v_{\bar x, r}(\cdot,0) =0\quad\text{on }-K_{\varepsilon}(\nu_{\bar x}),\quad\text{for all }r<r_\varepsilon.
$$
Arguing as in the end of Step 2, we conclude that
$$
-\mathcal{C}_\varepsilon(\nu)\cap
B_{r_\varepsilon/2}'\subset \{v_{\bar x}(\cdot, 0) =0\}.
$$

\medskip\noindent
\emph{Step 4.} Here without loss of generality we will assume that
$A(x_0)=I$ and $\nu_{x_0}=e_{n-1}$. 
Changing from function $v_{\bar x}$ to $v$, we may rewrite the result
of Steps 2 and 3 as 
\begin{align*}
\bar x+ A(\bar x)^{1/2} (\mathcal{C}_\varepsilon (\nu_{\bar x})\cap
B_{r_\varepsilon/2}')& \subset
\{v(\cdot, 0)>0\},\\
\bar x- A(\bar x)^{1/2} (\mathcal{C}_\varepsilon (\nu_{\bar x})\cap
B_{r_\varepsilon/2}')& \subset
\{v(\cdot, 0)=0\},
\end{align*}
for any $\bar x\in B'_{\eta_0}(x_0)\cap \Gamma(v)$.
Taking $\bar x$ sufficiently close to $x_0$ (and using
Lemma~\ref{a-nu-Holder}) we can guarantee that 
$$
 A(\bar x)^{1/2} (\mathcal{C}_\varepsilon (\nu_{\bar x})\cap
 B_{r_\varepsilon/2}')\supset \mathcal{C}_{2\varepsilon}(e_{n-1})\cap B_{r_\varepsilon/4}'.
$$
Hence, there exists $\eta_\varepsilon>0$ such that
\begin{equation}\label{cones-inclusion}
\begin{aligned}
\bar x+ (\mathcal{C}_{2\varepsilon} (e_{n-1})\cap B_{r_\varepsilon/4}')&\subset
\{v(\cdot,0)>0\},\\
\bar x- (\mathcal{C}_{2\varepsilon} (e_{n-1})\cap B_{r_\varepsilon/4}')&\subset
\{v(\cdot,0)=0\}
\end{aligned}
\end{equation}
for any $\bar x\in B'_{\eta_\varepsilon}(x_0)\cap
\Gamma(v)$.
Now, fixing $\varepsilon=\varepsilon_0$, by the standard arguments, we can conclude that there exists a
Lipschitz function $g:\R^{n-2}\to \R$ with $|\nabla g|\leq C_n/\varepsilon_0$
such that
\begin{align*}
B'_{\eta_{\varepsilon_0}}(x_0)\cap\{v(\cdot,
0)=0\}&=B'_{\eta_{\varepsilon_0}}(x_0)\cap \{x_{n-1}\leq g(x'')\}\\
B'_{\eta_{\varepsilon_0}}(x_0)\cap\{v(\cdot,
0)>0\}&=B'_{\eta_{\varepsilon_0}}(x_0)\cap \{x_{n-1}>g(x'')\}
\end{align*}

\medskip\noindent
\emph{Step 5.} Using the normalization $A(x_0)=I$ and
$\nu_{x_0}=e_{n-1}$ as in Step 4 and letting $\varepsilon\to 0$ we see
that $\Gamma(v)$ is differentiable at $x_0$ with normal
$\nu_0$. Recentering at any $\bar x\in B'_{\eta_{\varepsilon_0}}(x_0)\cap
\Gamma(v)$, we see that $\Gamma(v)$ has a normal 
$$
A(\bar x)^{-1/2}\nu_{\bar x}
$$
at $\bar x$. Finally noting that by Lemma~\ref{a-nu-Holder} the
mapping $\bar x\mapsto A(\bar x)^{-1/2}\nu_{\bar x}$ is $C^\beta$, we obtain that the function $g$ in Step 4
is $C^{1,\beta}$. 

The proof is complete.
\end{proof}

\begin{bibdiv}
\begin{biblist}
\bib{Alt}{article}{
   author={Alt, Hans Wilhelm},
   title={The fluid flow through porous media. Regularity of the free
   surface},
   journal={Manuscripta Math.},
   volume={21},
   date={1977},
   number={3},
   pages={255--272},
   issn={0025-2611},
   review={\MR{0449170 (56 \#7475)}},
}

\bib{A}{article}{
   author={Almgren, Frederick J., Jr.},
   title={Dirichlet's problem for multiple valued functions and the
   regularity of mass minimizing integral currents},
   conference={
      title={Minimal submanifolds and geodesics},
      address={Proc. Japan-United States Sem., Tokyo},
      date={1977},
   },
   book={
      publisher={North-Holland, Amsterdam-New York},
   },
   date={1979},
   pages={1--6},
   review={\MR{574247 (82g:49038)}},
}

\bib{AC1}{article}{
   author={Athanasopoulos, Ioannis},
   author={Caffarelli, Luis A.},
   title={A theorem of real analysis and its application to free boundary
   problems},
   journal={Comm. Pure Appl. Math.},
   volume={38},
   date={1985},
   number={5},
   pages={499--502},
   issn={0010-3640},
   review={\MR{803243 (86j:49062)}},
   doi={10.1002/cpa.3160380503},
}

\bib{AC}{article}{
   author={Athanasopoulos, I.},
   author={Caffarelli, L. A.},
   title={Optimal regularity of lower dimensional obstacle problems},
   language={English, with English and Russian summaries},
   journal={Zap. Nauchn. Sem. S.-Peterburg. Otdel. Mat. Inst. Steklov.
   (POMI)},
   volume={310},
   date={2004},
   number={Kraev. Zadachi Mat. Fiz. i Smezh. Vopr. Teor. Funkts. 35
   [34]},
   pages={49--66, 226},
   issn={0373-2703},
   translation={
      journal={J. Math. Sci. (N. Y.)},
      volume={132},
      date={2006},
      number={3},
      pages={274--284},
      issn={1072-3374},
   },
   review={\MR{2120184 (2006i:35053)}},
   doi={10.1007/s10958-005-0496-1},
}

\bib{ACS}{article}{
   author={Athanasopoulos, I.},
   author={Caffarelli, L. A.},
   author={Salsa, S.},
   title={The structure of the free boundary for lower dimensional obstacle
   problems},
   journal={Amer. J. Math.},
   volume={130},
   date={2008},
   number={2},
   pages={485--498},
   issn={0002-9327},
   review={\MR{2405165 (2009g:35345)}},
   doi={10.1353/ajm.2008.0016},
}

\bib{C2}{article}{
   author={Caffarelli, L. A.},
   title={The obstacle problem revisited},
   journal={J. Fourier Anal. Appl.},
   volume={4},
   date={1998},
   number={4-5},
   pages={383--402},
   issn={1069-5869},
   review={\MR{1658612 (2000b:49004)}},
   doi={10.1007/BF02498216},
}

\bib{CSS}{article}{
   author={Caffarelli, Luis A.},
   author={Salsa, Sandro},
   author={Silvestre, Luis},
   title={Regularity estimates for the solution and the free boundary of the
   obstacle problem for the fractional Laplacian},
   journal={Invent. Math.},
   volume={171},
   date={2008},
   number={2},
   pages={425--461},
   issn={0020-9910},
   review={\MR{2367025 (2009g:35347)}},
   doi={10.1007/s00222-007-0086-6},
}

\bib{DS}{article}{
  author={De Silva, Daniela},
  author={Savin, Ovidiu},
  title={$C^{\infty}$ regularity of certain thin free boundaries},
  date={2014},
  status={preprint},
  eprint={arXiv:1402.1098}
}

\bib{DS2}{article}{
  author={De Silva, Daniela},
  author={Savin, Ovidiu},
  title={Boundary Harnack estimates in slit domains and applications to thin free boundary problems},
  date={2014},
  status={preprint},
  eprint={arXiv:1406.6039}
}

\bib{FGS}{article}{
  author={Focardi, M.},
  author={Gelli, M. S.},
  author={Spadaro, E.},
  title={Monotonicity formulas for obstacle problems with Lipschitz
    coefficients},
  date={2013},
  status={preprint},
  eprint={arXiv:1306.2127},
}

\bib{F2}{book}{
   author={Friedman, Avner},
   title={Variational principles and free-boundary problems},
   edition={2},
   publisher={Robert E. Krieger Publishing Co., Inc., Malabar, FL},
   date={1988},
   pages={x+710},
   isbn={0-89464-263-4},
   review={\MR{1009785 (90k:35271)}},
}

\bib{GP}{article}{
   author={Garofalo, Nicola},
   author={Petrosyan, Arshak},
   title={Some new monotonicity formulas and the singular set in the lower
   dimensional obstacle problem},
   journal={Invent. Math.},
   volume={177},
   date={2009},
   number={2},
   pages={415--461},
   issn={0020-9910},
   review={\MR{2511747 (2010m:35574)}},
   doi={10.1007/s00222-009-0188-4},
}

\bib{GS}{article}{
   author={Garofalo, Nicola},
   author={Smit Vega Garcia, Mariana},
   title={New monotonicity formulas and the optimal regularity in the
   Signorini problem with variable coefficients},
   journal={Adv. Math.},
   volume={262},
   date={2014},
   pages={682--750},
   issn={0001-8708},
   review={\MR{3228440}},
   doi={10.1016/j.aim.2014.05.021},
}

\bib{GT}{book}{
   author={Gilbarg, David},
   author={Trudinger, Neil S.},
   title={Elliptic partial differential equations of second order},
   series={Classics in Mathematics},
   note={Reprint of the 1998 edition},
   publisher={Springer-Verlag, Berlin},
   date={2001},
   pages={xiv+517},
   isbn={3-540-41160-7},
   review={\MR{1814364 (2001k:35004)}},
}

\bib{KPS}{article}{
 author={Koch, Herbert},
 author={Petrosyan, Arshak},
 author={Shi, Wenhui},
 title={Higher regularity of the free boundary in the elliptic Signorini problem},
 status={to appear},
 journal={Nonlinear Anal.},
 date={2014},
eprint={arXiv:1406.5011},
 }

\bib{KRS1}{article}{
  author={Koch, Herbert},
  author={R\"uland, Angkana},
  author={Shi, Wenhui},
  title={The Variable Coefficient Thin Obstacle Problem: Carleman
    Inequalities},
  status={preprint},
  eprint={arXiv:1501.04496},
  date={2015},
}
\bib{KRS2}{unpublished}{
  author={Koch, Herbert},
  author={R\"uland, Angkana},
  author={Shi, Wenhui},
  title={The variable coefficient thin obstacle
problem: optimal regularity, free boundary regularity and first order asymptotics},
  status={personal communication},
  date={2015},
}

\bib{Ne}{book}{
   author={Ne{\v{c}}as, Jind{\v{r}}ich},
   title={Direct methods in the theory of elliptic equations},
   series={Springer Monographs in Mathematics},
   note={Translated from the 1967 French original by Gerard Tronel and Alois
   Kufner;
   Editorial coordination and preface by \v S\'arka Ne\v casov\'a and a
   contribution by Christian G. Simader},
   publisher={Springer, Heidelberg},
   date={2012},
   pages={xvi+372},
   isbn={978-3-642-10454-1},
   isbn={978-3-642-10455-8},
   review={\MR{3014461}},
   doi={10.1007/978-3-642-10455-8},
}

\bib{PSU}{book}{
   author={Petrosyan, Arshak},
   author={Shahgholian, Henrik},
   author={Uraltseva, Nina},
   title={Regularity of free boundaries in obstacle-type problems},
   series={Graduate Studies in Mathematics},
   volume={136},
   publisher={American Mathematical Society, Providence, RI},
   date={2012},
   pages={x+221},
   isbn={978-0-8218-8794-3},
   review={\MR{2962060}},
}

\bib{T}{book}{
   author={Troianiello, Giovanni Maria},
   title={Elliptic differential equations and obstacle problems},
   series={The University Series in Mathematics},
   publisher={Plenum Press, New York},
   date={1987},
   pages={xiv+353},
   isbn={0-306-42448-7},
   review={\MR{1094820 (92b:35004)}},
   doi={10.1007/978-1-4899-3614-1},
}

\bib{W2}{article}{
   author={Weiss, Georg S.},
   title={Partial regularity for weak solutions of an elliptic free boundary
   problem},
   journal={Comm. Partial Differential Equations},
   volume={23},
   date={1998},
   number={3-4},
   pages={439--455},
   issn={0360-5302},
   review={\MR{1620644 (99d:35188)}},
   doi={10.1080/03605309808821352},
}

\bib{W}{article}{
   author={Weiss, Georg S.},
   title={A homogeneity improvement approach to the obstacle problem},
   journal={Invent. Math.},
   volume={138},
   date={1999},
   number={1},
   pages={23--50},
   issn={0020-9910},
   review={\MR{1714335 (2000h:35057)}},
   doi={10.1007/s002220050340},
}
\end{biblist}
\end{bibdiv}

\end{document}